\theoremstyle{plain}
	\newtheorem{Thm}{Theorem}[section] 
	\newtheorem{Prop}[Thm]{Proposition}        
	\newtheorem{Lem}[Thm]{Lemma}           
	\newtheorem{Coro}[Thm]{Corollary}
	\newtheorem*{Warn}{Warning}
\theoremstyle{definition}
	\newtheorem{Def}[Thm]{Definition}
	\newtheorem{Nota}[Thm]{Notation}
\theoremstyle{remark}
	\newtheorem{Rem}[Thm]{Remark}
	\newtheorem{Que}[Thm]{Question}
\def\emptyset{\varnothing}
\def\NN{{\mathbf N}}
\def\ZZ{{\mathbf Z}}
\def\RR{{\mathbf R}}
\def\QQ{{\mathbf Q}}
\DeclareMathOperator\GL{GL}
\DeclareMathOperator\Mon{Mon}
\def\cB{{\mathcal B}}
\def\cF{{\mathcal F}}
\def\cP{{\mathcal P}}
\def\cQ{{\mathcal Q}}
\def\cR{{\mathcal R}}
\newcommand{\Ker}{\operatorname{Ker}}
\newcommand{\Id}{\operatorname{Id}}
\newcommand{\eps}{\varepsilon}
\newcommand{\support}{\operatorname{Supp}}
\newcommand{\Rec}{\operatorname{Rec}}
\newcommand{\SAF}{\operatorname{SAF}}
\newcommand{\IET}{\operatorname{IET}}
\newcommand{\Vect}{\operatorname{Vect}}
\newcommand{\Card}{\operatorname{Card}}
\newcommand{\GtG}{\operatorname{GtG}}
\newcommand{\Transpo}{\mathscr{T}}
\newcommand{\interfo}[2]{\mathopen{[} #1,#2 \mathclose{[}}
\newcommand{\pr}{\operatorname{pr}}
\newcommand{\ab}{\mathrm{ab}}
\newcommand{\whei}{\operatorname{WHei}}%\mathscr{C}_{\mathrm{min}}
\newcommand{\vl}{\operatorname{vol}^\otimes}
\newcommand{\Grd}{\operatorname{Grd}}
\newcommand{\City}{\operatorname{City}}
\newcommand{\Sky}{\operatorname{Sky}}
\newcommand{\Top}{\operatorname{Top}}
\newcommand{\Site}{\operatorname{Site}}
\newcommand{\Work}{\operatorname{Work}}
\newcounter{saveenum}
\begin{document}

\title{On groups of rectangle exchange transformations}
\author{Yves Cornulier and Octave Lacourte}
\date{\today}
%\subjclass[2010]{37E05, 20F65, 20J06}

\begin{abstract}
We study a generalization $\Rec_d$ of the group IET of interval exchange transformations in every dimension $d \geq 1$, called the rectangle exchange transformations group. The subset of restricted rotations in $\IET$ is a generating subset and we prove that a natural generalization of these elements, called restricted shuffles, form a generating subset of $\Rec_d$. We denote by $\mathscr{T}_d$ the subset of $\Rec_d$ made up of those transformations that permute two rectangles by translations. We prove that the derived subgroup is generated by $\mathscr{T}_d$. We also identify the abelianization of $\Rec_d$.
\end{abstract}
\maketitle

\setcounter{tocdepth}{1}
\tableofcontents

\begin{figure}[h!]
\begin{center}
\includegraphics[scale=0.4]{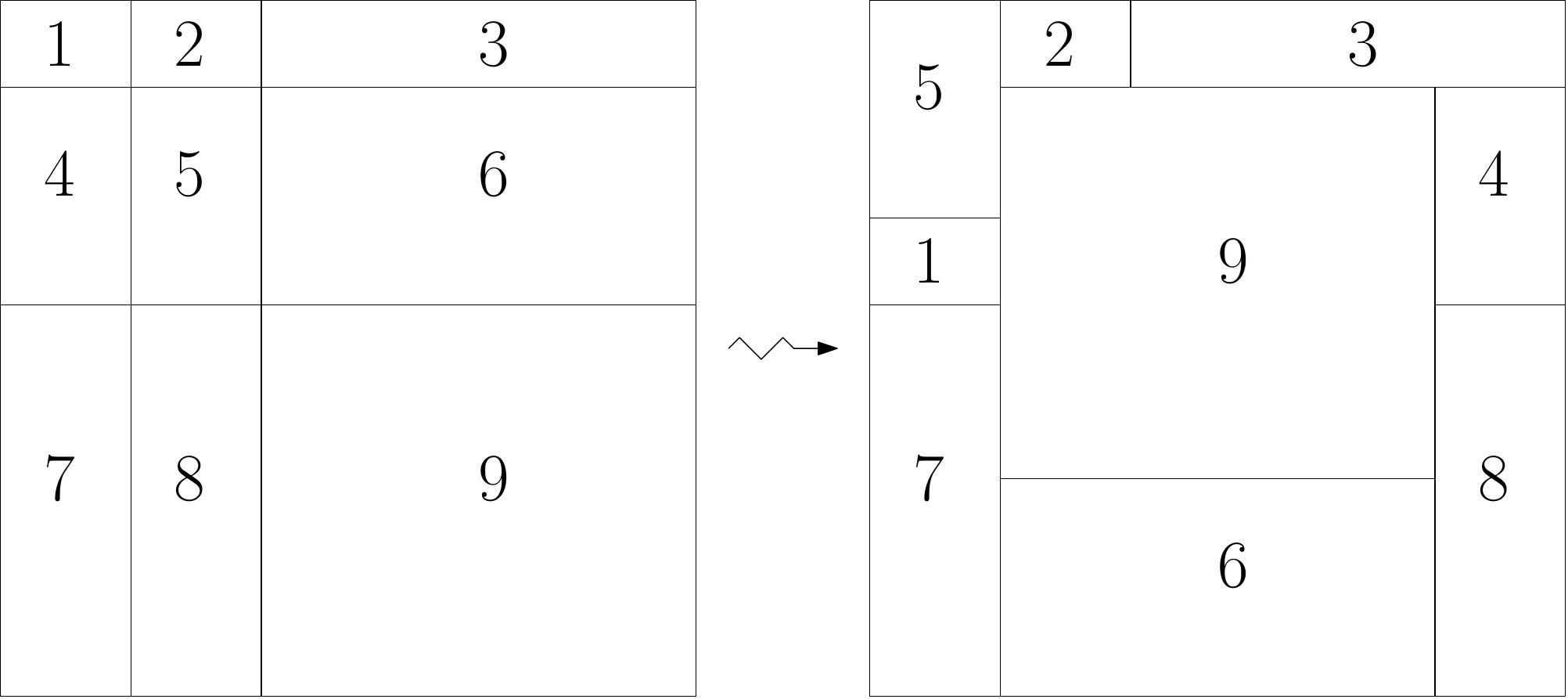}%example_reordonnement-eps-converted-to.pdf} 
\end{center}\caption{A rectangle exchange transformation in dimension $d=2$.}
\label{mainfig}
\end{figure}

\section{Introduction}

Let us define a {\bf rectangle exchange transformation} as an invertible self-trans\-for\-mation of the cube $[0,1\mathclose[^d$ that consists in cutting the square into finitely many rectangles and moving these rectangles by translations to get another partition of the square (see Figure \ref{mainfig}). See Section \ref{precise} for a rigorous definition.

For $d=1$, this reduces to the widely studied group $\IET$ of interval exchange transformations.

Historically, H. Haller \cite{Haller} introduced $2$-rectangle exchange transformations in 1981 and it is mainly ergodic properties of a single $2$-rectangle exchange transformation which are studied. More generally, dynamics of piecewise isometries on polytopes are studied, in particular by A. Goetz \cite{Goetz}, however the group itself is rarely considered. The larger groups of piecewise affine self-homeomorphisms of some affine manifolds were recently considered in particular by D. Calegari and D. Rolfsen \cite{Calegari-Rolfsen}.

Here our goal is to initiate the study of $\Rec_d$ as a group, beyond the case $d=1$. Our main results describe the abelianization homomorphism and establish that the derived subgroup is a simple group. Such results make use of the description of suitable generating subsets, which are also interest for their own sake.

We introduce two kinds of special elements in $\Rec_d$ (see Figure \ref{fig_shuf} for pictures and Definition \ref{Definition restricted shuffle and REC-transposition} for rigorous definitions).

\begin{Def}
A {\bf restricted shuffle} (depicted in Figure \ref{fig_shuf}) is an element of $\Rec_d$ that is identity outside some rectangle $R_1\cup R_2$, where $R_1$ and $R_2$ are ``consecutive'' rectangles (have disjoint interior and share a common facet), and ``shuffles'' $R_1$ and $R_2$.

% (see Definition \ref{Definition restricted shuffle and REC-transposition} for a rigorous definition).

A {\bf rectangle transposition} is the map, given (interior-)disjoint rectangles $R_1,R_2$ that are translates of each other, exchanges them by translation, and is identity elsewhere.

%; see also Figure \ref{fig_shuf} and Definition \ref{Definition restricted shuffle and REC-transposition}.

\end{Def}

\begin{figure}[h!]
\begin{center}
\includegraphics[width=0.45\textwidth]{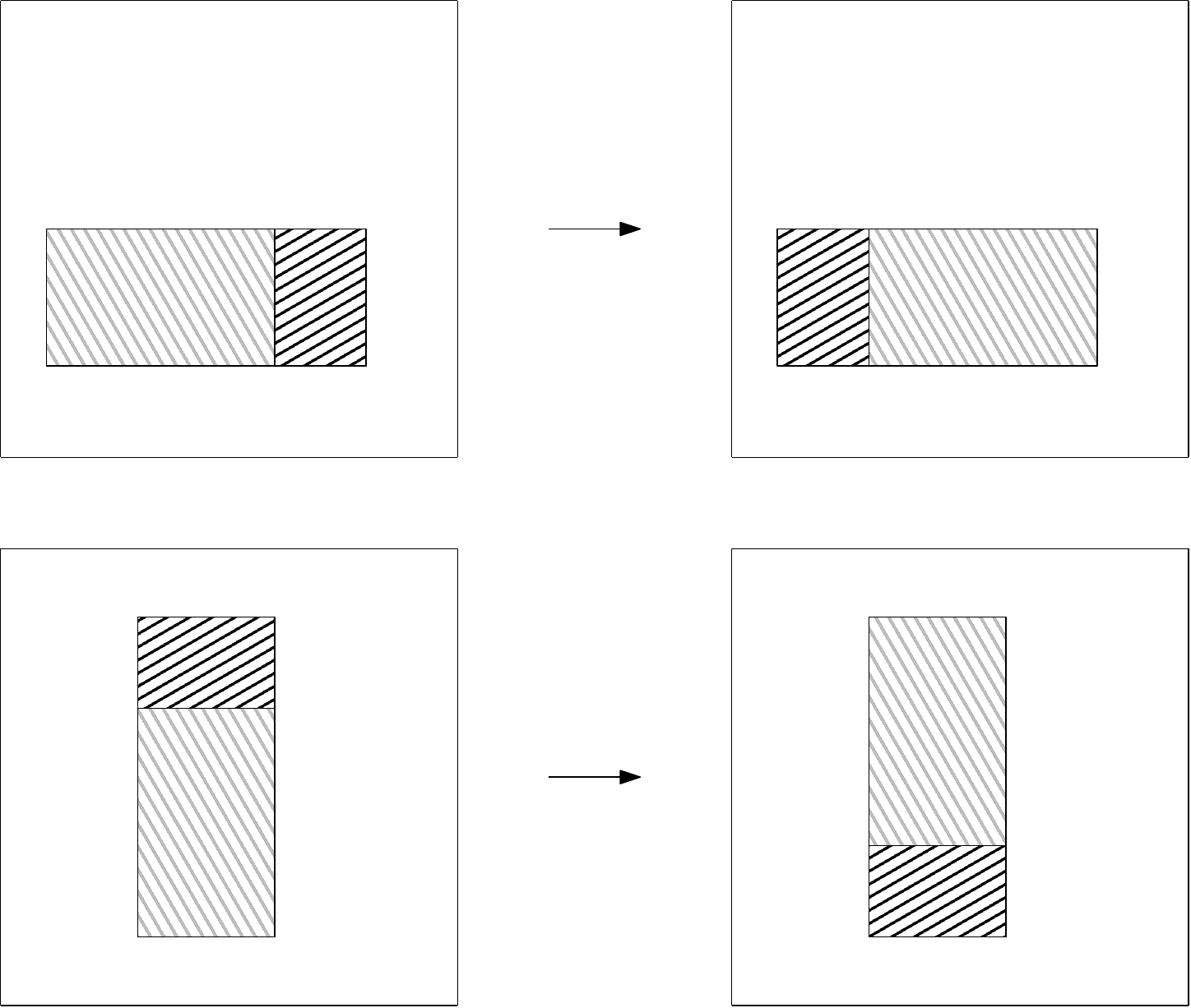} \hspace{20pt}
\includegraphics[width=0.38\textwidth]{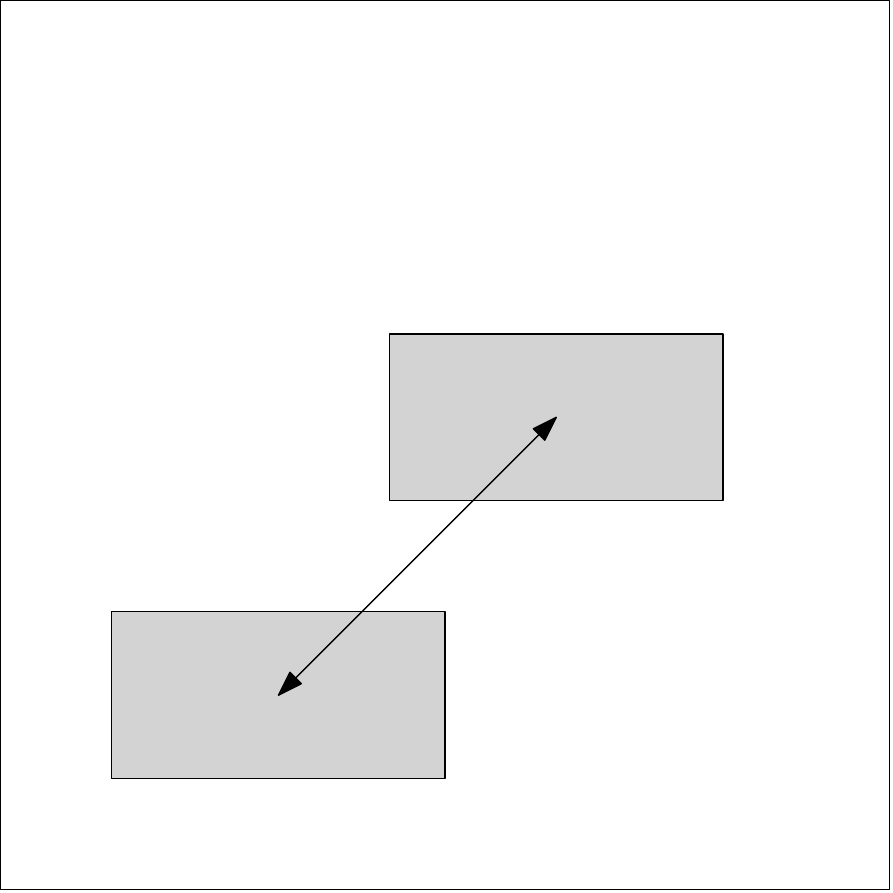}
\caption{\textbf{Left}: Examples of restricted shuffles in dimension 2 in both directions. 
\textbf{Right}: Example of a rectangle transposition in $\Rec_2$.}\label{fig_shuf}
\end{center}
\end{figure}
%\newpage

\begin{Thm}\label{Theorem the set of all restricted shuffles is a generating subset of REC}\label{rec_shu_gen}
The set of all restricted shuffles is a generating subset of $\Rec_d$.
\end{Thm}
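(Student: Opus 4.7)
The plan is to proceed by induction on the dimension $d$. For the base case $d=1$, restricted shuffles in $\Rec_1=\IET$ include the classical restricted rotations (exchanges of two consecutive intervals by translation), and it is a classical result that these generate $\IET$.

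For the inductive step with $d\ge 2$, given $f\in\Rec_d$, I will first refine the partition of $f$ using the product grid $\prod_{j=1}^d T_j$, where $T_j\subset [0,1]$ collects all $j$-th coordinates appearing as boundaries of either source or image rectangles of $f$. Writing $T_d=\{y_0<\cdots<y_m\}$ and $H_k=[0,1[^{d-1}\times [y_k,y_{k+1}[$, after refinement $f$ permutes grid cells by translations between grid coordinates. The aim is then to express $f$ as a finite alternating product of ``vertical'' exchanges, each supported on a column $C\times [0,1[$ for some $(d-1)$-grid cell $C$ and permuting the $d$-slabs of that column, and ``slab-preserving'' exchanges, sending each $H_k$ to itself. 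Each vertical exchange decomposes into $d$-directional restricted shuffles via the one-dimensional case applied to a single column; each slab-preserving exchange restricts on each slab to a $(d-1)$-dimensional rectangle exchange and thus, by the inductive hypothesis, decomposes into $(d-1)$-restricted shuffles which lift to $d$-restricted shuffles supported in $H_k$ (acting in one of the first $d-1$ directions).

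To obtain such a decomposition, I will proceed by iterative reduction of the complexity measure $\Phi(f):=\sum_{(C,y_k)} |k-k'|$, summed over source grid cells, where $k'$ denotes the $d$-index of $f(C,y_k)$. When $\Phi(f)=0$, $f$ is already slab-preserving and the inductive hypothesis concludes. Otherwise, I will exhibit a restricted shuffle $s$ (vertical or slab-preserving as appropriate) such that $\Phi(s\circ f)<\Phi(f)$, and iterate until $\Phi=0$; the product of these shuffles then realizes $f^{-1}$ times a slab-preserving element.

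The main obstacle will be ensuring that a $\Phi$-reducing shuffle always exists. A single vertical $2$-swap exchanging two $d$-slabs in one column does not always strictly reduce $\Phi$: the triangle inequality $|c-a|\le |c-b|+|b-a|$ shows the change is nonpositive, but equality occurs precisely when $b$ lies between $a$ and $c$ (including the degenerate case $a=b$ where the swap trades a matched cell for a mismatched one at no gain). I expect to overcome this by combining two devices: allowing more general $d$-directional restricted shuffles (any 1D IET on a column, itself a product of consecutive $2$-swaps by the base case), and interleaving with slab-preserving shuffles that first rearrange source $(d-1)$-positions so that an obstructing configuration is broken. A careful case analysis based on the triangle-inequality identity and on choosing, at each step, a cell maximizing $|k-k'|$ should guarantee strict progress, and hence termination.
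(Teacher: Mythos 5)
Your opening premise does not hold, and the paper explicitly warns against it (Remark~\ref{Remark example of an element which does not send every grid into another grid}). Taking $T_j$ to be the set of all $j$-th coordinates occurring as boundaries of the source or image rectangles of $f$ and refining to the grid-pattern $\prod_j T_j$ does \emph{not} make $f$ permute grid cells by translation. The images of the refined cells need not align with the $T_j$-grid: already for $d=1$, if $f$ is the identity on $\interfo{0}{0.5}$ and rotates $\interfo{0.5}{1}$ by $0.1$, then source boundaries give $\{0,0.5,0.9\}$ and image boundaries give $\{0,0.5,0.6\}$, but the refined cell $\interfo{0.5}{0.6}$ maps to $\interfo{0.6}{0.7}$, whose endpoint $0.7$ is not in $T_1$. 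Closing up under iterated refinement does not terminate in general (this is exactly the phenomenon behind Remark~\ref{Remark example of an element which does not send every grid into another grid}). Since there is no grid whose cells $f$ permutes, the complexity $\Phi(f)=\sum |k-k'|$ is not even defined, and the whole reduction scheme collapses at the start.

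Independently of this, you flag yourself that the $\Phi$-reduction step is not guaranteed to strictly decrease; that concern is genuine, but it is moot once the grid premise fails. The paper instead works with an associated \emph{grid-pattern} (not requiring the image to be a grid) and introduces the setwise $\QQ$-free refinement via Eliott's theorem (Proposition~\ref{Proposition We can always refine a grid-pattern into a setwise Q-free grid-pattern}). This $\QQ$-freeness is the load-bearing device: it lets one match up rectangles across two partitions purely by shape (Lemma~\ref{Lemma two partitions with Q-conditions contains the same number of pieces}), which is what allows the horizontal re-arrangement step (Lemma~\ref{Lemma diminution of the cardinal of the complexity}) to strictly decrease a well-defined complexity (the set of ``working heights'' of the sky), with the terminal case handled by vertical shuffles (Lemma~\ref{Lemma case of an empty complexity}). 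Your broad strategy — induction on $d$, alternating ``vertical'' with ``slab-preserving'' moves, a complexity argument — is in the same spirit as the paper, but without the $\QQ$-free refinement you have neither a usable combinatorial encoding of $f$ nor a well-founded measure of progress.
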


For $d=1$, restricted shuffles are known as restricted rotations. It is a well-known observation that they form a generating subset of $\IET$: after encoding an interval exchange transformation as a permutation with given interval lengths, this is an easy consequence of the fact that the symmetric group $\mathfrak{S}_n$ is generated by transpositions $(i,i+1)$ for $1\le i<n$. This argument falls apart for $d\ge 2$, as the combinatorics of a rectangle exchange is not always well-encoded by a permutation, and conversely because rearranging rectangles does not always define a rectangle exchange. The proof of Theorem \ref{Theorem the set of all restricted shuffles is a generating subset of REC} is indeed significantly more involved. 
For $d=2$ a variant of the proof, providing a combinatorial refinement of Theorem \ref{Theorem the set of all restricted shuffles is a generating subset of REC}, is performed in Section \ref{Section A refinement for REC 2}.

The various next results actually make a crucial use of Theorem \ref{rec_shu_gen}.

Thanks to Theorem \ref{Theorem the set of all restricted shuffles is a generating subset of REC} we obtain that the derived subgroup $D(\Rec_d)$ is generated by conjugates of commutators of two restricted shuffles. With this result we prove the following theorem:

\begin{Thm}\label{Theorem D(REC) is generated by Transpo}
The derived subgroup $D(\Rec_d)$ is simple and generated by its subset of rectangle transpositions. It is contains every nontrivial normal subgroup of $\Rec_d$.
\end{Thm}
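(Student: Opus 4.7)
The plan is to proceed in four coupled steps: (i) $\Transpo_d \subset D(\Rec_d)$; (ii) $\langle \Transpo_d \rangle$ is normal in $\Rec_d$; (iii) the reverse inclusion $D(\Rec_d) \subset \langle \Transpo_d \rangle$, yielding equality; (iv) $D(\Rec_d)$ is contained in every nontrivial normal subgroup of $\Rec_d$, whence simplicity.

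For (i), given $\tau \in \Transpo_d$ swapping $R_1$ with $R_2 = R_1 + v$, split $R_1 = A \sqcup C$ and $R_2 = B \sqcup D$ by a hyperplane perpendicular to $v$, so that $B = A+v$, $D = C+v$ and all four pieces are pairwise translates. Then $\tau$ coincides with the double transposition $(AB)(CD)$ on these four equal pieces, and the $\mathfrak{A}_4$-identity $[(123),(124)]=(12)(34)$ gives $\tau = [g,h]$ with $g = (A,B,C)$ and $h = (A,B,D)$ the corresponding 3-cycles of equal rectangles -- both legitimate elements of $\Rec_d$. For (ii), given $\tau \in \Transpo_d$ swapping $R_1 \leftrightarrow R_2 = R_1+v$ and $g \in \Rec_d$, a common refinement of the partitions underlying $g$ and $\tau$ cuts $g(R_1)$ and $g(R_2)$ into matching small pieces $(S_j, S_j + w_j)_j$; then $g\tau g^{-1} = \prod_j \tau_j$ is a finite product of rectangle transpositions.

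Step (iii) is where the main work lies. By Theorem~\ref{rec_shu_gen} and (ii) it suffices to show that the commutator of any two restricted shuffles $\sigma_1, \sigma_2$ lies in $\langle \Transpo_d \rangle$. This is nontrivial: a single restricted shuffle is not in $\langle \Transpo_d \rangle$ in general, as detected by $\SAF$-type invariants. The strategy is to refine the partitions underlying $\sigma_1, \sigma_2, \sigma_1^{-1}, \sigma_2^{-1}$ so that $[\sigma_1, \sigma_2]$ acts on the refined family of pieces as a permutation by translations (each orbit consisting of pairwise-translate rectangles), and then to apply the structural lemma that any element of $\Rec_d$ permuting a finite family of pairwise-translate rectangles is a product of rectangle transpositions (via the standard decomposition of permutations into transpositions).

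For (iv), let $1 \ne g \in N \triangleleft \Rec_d$. Choose a small rectangle $R$ with $g(R) \cap R = \emptyset$ and a translate $R' = R + u$ (with $u$ small) disjoint from $R \cup g(R) \cup g(R')$. Then the commutator $[g, (R,R')] \in N$ is a product of the two disjoint-support rectangle transpositions $(R,R')$ and $(g(R), g(R'))$. By $\Rec_d$-transitivity on quadruples of pairwise disjoint equal rectangles together with normality of $N$, $N$ contains every such ``double rectangle transposition''; but $\langle \Transpo_d \rangle$ is generated by such doubles (any single $\tau \in \Transpo_d$ is, after the half-cut used in (i), one of them), so $\Transpo_d \subset N$ and hence $D(\Rec_d) = \langle \Transpo_d \rangle \subset N$. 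Simplicity of $D(\Rec_d)$ follows by running the same scheme inside $D(\Rec_d)$, using that any $\Rec_d$-conjugator involved can be multiplied by a suitable rectangle transposition (itself in $D(\Rec_d)$) to be brought inside $D(\Rec_d)$.
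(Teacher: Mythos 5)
Your steps (i) and (ii) are correct and in the same spirit as the paper (which writes a rectangle transposition as a commutator after cutting the pieces in half, and observes that conjugation by $\Rec_d$ preserves $\Transpo_d$ after refinement). The real problem is step (iii). You claim that, after refining the partitions, the commutator $[\sigma_1,\sigma_2]$ of two restricted shuffles ``acts on the refined family of pieces as a permutation by translations, each orbit consisting of pairwise-translate rectangles.'' That is false when $\sigma_1$ and $\sigma_2$ shuffle in the \emph{same} direction $i$. On the fiber over $R\cap R'$ the commutator acts as $[s,s']\otimes \mathrm{id}$, where $s,s'$ are restricted rotations of the interval, and $[s,s']$ is typically an interval exchange of infinite order (indeed aperiodic), so it does not permute \emph{any} finite family of intervals, let alone a family of pairwise-translate ones. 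Being piecewise a translation with respect to a partition $\cP$ is not at all the same as sending $\cP$ to itself. Your structural lemma about permutations of translate pieces is correct but simply does not apply here. This is exactly why the paper cannot avoid invoking the one-dimensional result of Sah/Vorobets that $D(\IET)$ is generated by interval transpositions: in Proposition \ref{Proposition Generating subsets of D(REC)}(a), Case (1) (same direction) the paper reduces to that one-dimensional fact fiberwise, and only Case (2) (distinct directions) uses the ``permutation'' mechanism you have in mind, after shrinking $R,R'$ so that the two shuffles move points entirely out of each other's supports, turning the commutator into a genuine $3$-cycle of translate rectangles.

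For step (iv) the $\eps$-commutator scheme is the right idea (it matches Theorem \ref{Theorem Simplicity of D(REC)} and Proposition \ref{Proposition Generating subsets of D(REC)}(c)), but two details are missing. First, the double transpositions $(R,R')(g(R),g(R'))$ that you produce in $N$ involve rectangles $R$ small enough to fit inside a translation piece of $g$; conjugation by $\Rec_d$ only gives you doubles of the same shape, so to conclude you must first show that \emph{small} rectangle transpositions already generate $\langle\Transpo_d\rangle$ (this is the content of the paper's Proposition \ref{Proposition Generating subsets of D(REC)}(b): subdivide $P$ and $Q$ into tiny translate pieces and swap piecewise). Second, the closing sentence about ``running the same scheme inside $D(\Rec_d)$'' is hand-waved; the point one needs is that $D(\Rec_d)$ already acts with enough transitivity on small translate rectangles (through rectangle transpositions, which lie in $D(\Rec_d)$ by your step (i)), so the conjugators can indeed be taken in $D(\Rec_d)$. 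Neither of these is hard, but they must be said. The decisive gap, however, is (iii): without the one-dimensional input, the argument as written does not establish $D(\Rec_d)\subseteq\langle\Transpo_d\rangle$.
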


Arnoux-Fathi and independently Sah exhibited a surjective homomorphism from $\IET=\Rec_1$ onto the abelian group $\Lambda^2_\QQ\RR$, the second exterior algebra of $\RR$ over $\QQ$, which is now known as SAF homomorphism. Moreover, Sah proved (see \cite{Arnoux,Sah}) that it induces an isomorphism from the abelianization of $\IET$ onto $\Lambda^2_\QQ\RR$.

Our next contribution is to exhibit the suitable analogue of the SAF homomorphism in the context of $\Rec_d$. We denote by $\RR^{\otimes k}$ the $k$-th tensor power of $\RR$ over $\QQ$. 

\begin{Thm}\label{Theorem Generalized SAF}
There is a natural surjective group homomorphism from $\Rec_d$ onto $( \RR^{\otimes (d-1)} \otimes (\bigwedge^2_{\QQ} \RR))^d$, called the generalized $\SAF$-homomorphism, whose kernel is the derived subgroup $D(\Rec_d)$.
\end{Thm}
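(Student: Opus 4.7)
The plan is to adapt the classical Sah--Arnoux--Fathi construction coordinatewise. Given $f \in \Rec_d$, pick any admissible partition $R_1, \ldots, R_n$ of $[0,1)^d$ on which $f$ acts by translation $\vec t^{(k)} = (t^{(k)}_1, \ldots, t^{(k)}_d)$, and write $\ell^{(k)}_j$ for the side length of $R_k$ in direction $j$. For each $i \in \{1, \ldots, d\}$, I would set
$$\SAF_i(f) := \sum_{k=1}^n \Bigl(\bigotimes_{j \ne i} \ell^{(k)}_j\Bigr) \otimes \bigl(\ell^{(k)}_i \wedge t^{(k)}_i\bigr) \in \RR^{\otimes(d-1)} \otimes \bigwedge\nolimits^2_\QQ \RR,$$
and $\SAF(f) := (\SAF_1(f), \ldots, \SAF_d(f))$.

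The first verification is that $\SAF_i(f)$ does not depend on the partition: slicing a rectangle $R_k$ by a hyperplane perpendicular to direction $j_0$ splits $\ell^{(k)}_{j_0}$ additively and leaves the other lengths and the translation invariant, so $\QQ$-bilinearity of $\otimes$ and $\wedge$ makes the contribution unchanged, and any two admissible partitions admit a common refinement. The homomorphism property then follows by choosing a partition simultaneously adapted to $g$ and to $fg$: on each piece the translation under $fg$ is the sum of that under $g$ and that under $f$ evaluated at the image, hence $\SAF(fg) = \SAF(f) + \SAF(g)$.

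Next I would establish surjectivity and the inclusion $D(\Rec_d) \subseteq \ker(\SAF)$. A rectangle transposition swaps two translates of common shape $\vec\ell$ by translation vectors $\vec s$ and $-\vec s$; the $i$-th contribution is $(\bigotimes_{j \ne i} \ell_j) \otimes (\ell_i \wedge s_i + \ell_i \wedge (-s_i)) = 0$, and the inclusion follows from Theorem \ref{Theorem D(REC) is generated by Transpo}. For surjectivity I would evaluate $\SAF$ on a restricted shuffle $\sigma$ in direction $i$ inside a rectangle of shape $\vec\ell$ with shuffle parameter $a \in (0, \ell_i)$: only translations along direction $i$ occur, so $\SAF_j(\sigma) = 0$ for $j \ne i$, and a direct computation using $a \wedge (\ell_i - a) = a \wedge \ell_i$ and antisymmetry of $\wedge$ yields $\SAF_i(\sigma) = 2(\bigotimes_{j \ne i} \ell_j) \otimes (a \wedge \ell_i)$. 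Varying the parameters, these simple tensors $\QQ$-span the $i$-th summand, so $\SAF$ surjects onto $T := (\RR^{\otimes(d-1)} \otimes \bigwedge^2_\QQ \RR)^d$.

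The crux is the reverse inclusion $\ker(\SAF) \subseteq D(\Rec_d)$. Since $T$ is abelian, $\SAF$ factors through $\Rec_d^{\ab}$; by Theorem \ref{Theorem the set of all restricted shuffles is a generating subset of REC} this abelianization is generated by classes of restricted shuffles, and two such shuffles of identical shape and parameter placed at different locations are conjugate via a rectangle transposition, hence equal modulo $D(\Rec_d)$. Subdividing the support of a direction-$i$ restricted shuffle by a hyperplane transverse to direction $i$ expresses it in $\Rec_d^{\ab}$ as the sum of two commuting direction-$i$ restricted shuffles, and iterating makes the class $[\sigma]$ multilinear over $\QQ$ in the transverse lengths $\ell_j$ with $j \ne i$; slicing along direction $i$ instead and invoking Sah's classical theorem for $\IET$ reduces the $(\ell_i, a)$-dependence to $\bigwedge^2_\QQ \RR$. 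The main obstacle will be to verify that these are the only relations, equivalently to assemble the above identifications into an explicit inverse $T \to \Rec_d^{\ab}$; once this is done $\overline{\SAF} : \Rec_d^{\ab} \to T$ is an isomorphism and the theorem follows.
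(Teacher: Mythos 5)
Your definition of $\SAF_i$ coincides (up to a sign and a factor of $2$, which are harmless) with the paper's $T_i$, and your treatment of well-definedness, the homomorphism property, surjectivity, and the inclusion $D(\Rec_d)\subseteq\ker(\SAF)$ all follow the same path as the paper. So the easy half of the proof is essentially correct and essentially the same.

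The gap is exactly where you flag it: ``the main obstacle will be to verify that these are the only relations.'' That sentence is not a detail to be filled in later --- it \emph{is} the theorem. Your slicing argument shows that in $\Rec_d^{\ab}$ the classes of restricted shuffles satisfy (i) additivity in each transverse length $\ell_j$, and (ii) some antisymmetry in $(\ell_i,a)$; but an abelian group generated by symbols $[\sigma_{c,a,b}]$ subject to \emph{some} relations including these could a priori be a proper quotient of $\RR^{\otimes(d-1)}\otimes\bigwedge^2_\QQ\RR$. To conclude $\ker(\SAF)=D(\Rec_d)$ you must produce a map $\RR^{\otimes(d-1)}\otimes\bigwedge^2_\QQ\RR\to\Rec_d^{\ab}$ in the other direction, and that requires knowing a \emph{presentation} of the target, not just a spanning set. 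The paper isolates exactly this in Lemma \ref{uab} and its elaboration Lemma \ref{ucab}: these give an explicit generators-and-relators description of $\RR^{\otimes(d-1)}\otimes\bigwedge^2_\QQ\RR$ (additivity relators in each slot plus the relator $2u_{w,a,b}+u_{w,2b,a}=0$), and Proposition \ref{rab} then checks, by direct commutator computations in $\Rec_d$, that the classes $\pi(R_{i,c,a,b})$ satisfy precisely these relators. That yields the section $M$ with $M\circ T=\pi$ (Lemma \ref{lemsecd}), which is what makes $\overline{T}$ injective.

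Your suggestion to ``invoke Sah's classical theorem for $\IET$'' to handle the $(\ell_i,a)$-dependence is in the right spirit but cannot be used as a black box. The classical statement says $\IET_{\ab}\cong\bigwedge^2_\QQ\RR$; it does not hand you a presentation of $\bigwedge^2_\QQ\RR$ by restricted-rotation symbols, which is what you need to map symbols $[\sigma_{c,a,b}]$ with varying transverse data $c$ coherently. This is why the paper does not cite Sah but \emph{reproves} the $d=1$ case through Lemma \ref{uab}: the reproof extracts exactly the algebraic lemma that then generalizes (Lemma \ref{ucab}) to arbitrary $d$. Without that lemma or an equivalent presentation argument, the proposal does not close.
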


Let us partially describe this abelianization homomorphism here. In $\RR^d$, define a {\bf rectangle} as a product $\prod_{i=1}^d[a_i,b_i\mathclose[$ of left-closed right-open bounded intervals. Define a {\bf multirectangle} as a finite union of rectangles. 
We define the tensor volume $\vl_d(M)$ of a multirectangle in $\RR^d$ as follows: $\vl_d(\prod_{i=1}^d[a_i,a_i+t_i\mathclose[)=t_1\otimes\dots\otimes t_d\in\RR^{\otimes d}$ (tensor product over $\QQ$), and $\vl_d$ is additive under disjoint unions. This is well-defined, by a simple argument (Proposition \ref{tensor_measure}).

Then the abelianization homomorphism can essentially be described as a (non-surjective) homomorphism $\tau=(\tau_1,\dots,\tau_d)$ into $(\RR^{\otimes (d+1)})^d$, where $\tau_i$ is defined by 
\[\tau_i(f) = \sum_{x\in\RR^d}\vl_d\big((f-\mathrm{id})^{-1}(\{x\})\big)\otimes x_i.\]
The main substance of the proof consists in proving that every element in the kernel of $\tau$ is a product of commutators.
We have found it convenient to rewrite the proof of the original IET case ($d=1$), using Lemma \ref{uab} which identifies some abelian group defined by a suitable infinite presentation. This approach allows to avoid relying on too many computations, and especially performs these computations in a context disjoint from IET. The general case then relies on an elaboration of this combinatorial algebraic lemma. The image of $\tau$ is easy to described (and coincides with $\bigoplus_{i=1}^d\mathrm{Im}(\tau_i)$); we use a simple change of coordinates to describe it more smoothly in \S\ref{gsaf}.

As an application, in Section \ref{Section a normal subgroup bigger than the derived subgroup} we consider the subgroup $\GtG_d$ of $\Rec_d$ generated by the subset $\IET^d \cup \Transpo_d$ (where the group $\IET^d$ acts coordinate-wise and $\Transpo_d$ is the set of rectangle transpositions). Obviously $\GtG_1=\Rec_1$. In contrast, a consequence of Theorem \ref{Theorem Generalized SAF} (along with the description of the abelianization homomorphism) for $d \geq 2$ is:

\begin{Coro}
The group $\GtG_d$ is a proper normal subgroup of $\Rec_d$, which strictly contains $D(\Rec_d)$.
\end{Coro}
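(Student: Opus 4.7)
The plan is to use the generalized SAF homomorphism $\tau$ of Theorem \ref{Theorem Generalized SAF} to locate the three groups $D(\Rec_d) \subseteq \GtG_d \subseteq \Rec_d$ at different levels of the abelianization. The containment $D(\Rec_d) \subseteq \GtG_d$ is immediate: by Theorem \ref{Theorem D(REC) is generated by Transpo}, $D(\Rec_d)$ is generated by $\Transpo_d$, and $\Transpo_d \subseteq \GtG_d$ by definition. Normality of $\GtG_d$ in $\Rec_d$ then comes for free, since any subgroup containing $D(\Rec_d)$ is normal (the ambient quotient is abelian).

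For the strict containment $\GtG_d \supsetneq D(\Rec_d)$, I pick a one-dimensional $f_1 \in \IET$ with nonzero SAF invariant and consider the element $f = (f_1, \mathrm{id}, \ldots, \mathrm{id}) \in \IET^d \subseteq \GtG_d$. A direct application of the definition of $\tau$ should yield a nonzero first component---essentially $\SAF(f_1)$ tensored with $1$'s---so $f \notin \Ker(\tau) = D(\Rec_d)$.

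The heart of the argument is the properness $\GtG_d \subsetneq \Rec_d$. I will compute the image of the generating set $\IET^d \cup \Transpo_d$ under $\tau$. Any rectangle transposition has $\tau$-image zero, since the two swapped rectangles contribute equal tensor volumes with opposite translations, which cancel. For $f = (f_1, \ldots, f_d) \in \IET^d$, the fiber $(f-\mathrm{id})^{-1}(\{y\})$ factorizes coordinatewise as a product of one-dimensional fibers $B_j(y_j) \subseteq [0,1\mathclose[$, and multilinearity of $\vl_d$ on disjoint unions of products, combined with the fact that $\sum_{y_j}|B_j(y_j)|=1$, reduces $\tau_i(f)$ to $\SAF(f_i)$ placed in position $i$ and tensored with $1 \in \QQ \subseteq \RR$ in the remaining $d-1$ positions. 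Consequently, the image of $\GtG_d$ in $(\RR^{\otimes(d-1)} \otimes \bigwedge^2_\QQ \RR)^d$ lies in the ``rational'' subgroup $(\QQ^{\otimes(d-1)} \otimes \bigwedge^2_\QQ \RR)^d$, which is proper whenever $d \geq 2$.

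To finish, I will exhibit an element of $\Rec_d$ whose $\tau$-image escapes this rational subgroup. A restricted shuffle cutting the unit cube transversely to the first coordinate at a value $b$, with other side-lengths $a_2, \ldots, a_d$ not all rational, does the job: the formula defining $\tau_1$ yields an image of the form $(b \wedge (a_1 - b)) \otimes a_2 \otimes \cdots \otimes a_d$ (after the change of coordinates of \S\ref{gsaf}), which is non-rational in at least one of the middle tensor slots. The main technical point of the plan is the computation that $\tau(\IET^d)$ lies in the rational subgroup; everything else is either structural or a direct application of the definition of $\tau$.
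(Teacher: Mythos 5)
Your proposal is correct and follows essentially the same route as the paper: use the generalized SAF homomorphism $\tau=T$ to show that $D(\Rec_d)\subseteq\GtG_d\subseteq\Rec_d$ sit at three distinct levels, with $T(\GtG_d)$ equal to $(\{1^{\otimes(d-1)}\}\otimes\bigwedge^2_\QQ\RR)^d$ (which is nonzero and, for $d\geq 2$, proper in $T(\Rec_d)$), normality of $\GtG_d$ being automatic since it contains $D(\Rec_d)$. The only cosmetic difference is that you compute $T(f)$ for arbitrary $f\in\IET^d$ by factoring the fibers coordinatewise, whereas the paper records the value only on the normal generators $R^{(i)}_{a,b}$ and invokes normal generation of $\IET^d$; both arguments give the same image.
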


For $d\ge 1$, let $\Rec_d^{\bowtie}$ be the group of ``rectangle exchanges with flips'' (acting on rectangles with piecewise isometries whose linear part are diagonal with $\pm 1$ diagonal entries, see \S\ref{sflip}).

\begin{Coro}\label{recbowsimple}
For every $d\ge 1$, the group $\Rec_d^{\bowtie}$ is a simple group.
\end{Coro}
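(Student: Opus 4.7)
The plan is to show that any nontrivial normal subgroup $N$ of $\Rec_d^{\bowtie}$ must equal $\Rec_d^{\bowtie}$. The first step is to produce a nontrivial element of $N \cap \Rec_d$. Starting from any $f \in N \setminus \{1\}$, I would choose a rectangle $R$ on which $f|_R$ is a single (non-identity) affine isometry, with linear part $A \in \{\pm 1\}^d$, and consider a rectangle transposition $\sigma$ that swaps two small subrectangles $T_1, T_2 \subset R$. The key observation is that the linear part of $f\sigma f^{-1}$ at any point of $f(T_1)$ equals $A \cdot \Id \cdot A^{-1} = \Id$, since $f$ has the same linear part $A$ on both $T_1$ and $T_2$; hence $f\sigma f^{-1}$ is itself a rectangle transposition (the one swapping $f(T_1)$ and $f(T_2)$), so $[f,\sigma] \in N \cap \Rec_d$. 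For a generic choice of $T_1, T_2$ one has $\{f(T_1), f(T_2)\} \neq \{T_1, T_2\}$ (using that $f$ is nontrivial on $R$), so $[f,\sigma] \neq 1$.

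Since $N \cap \Rec_d$ is then a nontrivial normal subgroup of $\Rec_d$, Theorem~\ref{Theorem D(REC) is generated by Transpo} gives $D(\Rec_d) \subseteq N$; in particular every rectangle transposition lies in $N$. Next I would use normality of $N$ in $\Rec_d^{\bowtie}$ to include every ``single-rectangle flip'' $\phi_{R,A}$, namely the involution acting on $R$ by the isometry of linear part $A \in \{\pm 1\}^d \setminus \{\Id\}$ fixing the center of $R$ (and identity elsewhere). Picking a coordinate $i$ with $A_{ii} = -1$, I split $R = R_a \cup R_b$ in direction $i$ and let $\tau$ be the rectangle transposition swapping $R_a, R_b$ by translation; then a direct check (best seen in the $1$-dimensional model $R = [0,2)$, $A = -1$, $h(x) = 1 - x$ on $R_a$) shows $h\tau h^{-1} = \phi_{R,A}$ for $h := \phi_{R_a, A} \in \Rec_d^{\bowtie}$, so $\phi_{R,A} \in N$ by normality.

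It remains to show that rectangle transpositions together with single-rectangle flips generate $\Rec_d^{\bowtie}$. Any $f \in \Rec_d^{\bowtie}$ factors as $f = g \cdot \prod_i \phi_{R_i, A_i}$, where $\{R_i\}$ is the partition associated to $f$, $A_i$ is the linear part of $f$ on $R_i$, and $g \in \Rec_d$ is the ``translation-only'' rearrangement that sends each $R_i$ to $f(R_i)$ by the translation $f(c_i) - c_i$ (where $c_i$ is the center of $R_i$). By Theorem~\ref{rec_shu_gen}, $g$ is a product of restricted shuffles, and the remaining point is that each restricted shuffle is itself a product of rectangle transpositions and single-rectangle flips in $\Rec_d^{\bowtie}$. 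In dimension one, a cyclic rotation of $[0,\ell)$ by $a$ admits the explicit decomposition
\[
\phi_{[0,\ell-a)} \,\phi_{[\ell-a,\ell)} \,\phi_{[0,\ell/2)} \,\phi_{[\ell/2,\ell)} \,\tau_{[0,\ell/2),[\ell/2,\ell)}
\]
(all flips of linear part $-1$); higher-dimensional restricted shuffles, being slab-wise $1$-dimensional interval exchanges, decompose analogously. Combining, every $f \in \Rec_d^{\bowtie}$ lies in $N$, so $N = \Rec_d^{\bowtie}$.

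The main obstacle I expect is verifying this final decomposition: while the $1$-dimensional formula is explicit and easily checked (by tracing a point through each factor), lifting it cleanly to the slab setting of higher-dimensional restricted shuffles will need some care to match the paper's precise combinatorial definition of restricted shuffle.
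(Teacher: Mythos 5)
Your argument is correct and shares the paper's overall architecture: produce a nontrivial element of $N\cap\Rec_d$ by a commutator, invoke Theorem~\ref{Theorem D(REC) is generated by Transpo} to get $D(\Rec_d)\subseteq N$, show that the flip-type involutions lie in $N$, and finally check that these elements together generate $\Rec_d^{\bowtie}$. Where it genuinely differs is in the two middle steps, precisely where the paper is terse. To get the single-rectangle flip $\phi_{R,A}$ into $N$, the paper merely asserts that every restricted reflection is conjugate in $\Rec_d^{\bowtie}$ to an element of $\Rec_d$, ``the simple argument being omitted''; your identity $h\tau h^{-1}=\phi_{R,A}$ with $h=\phi_{R_a,A}$ and $\tau$ the half-swap is precisely that conjugation written from the other side ($h^{-1}\phi_{R,A}h=\tau$), and it checks out. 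To get restricted shuffles into $N$, the paper uses the commutator identity $r=w^2=[w,s]$ with $s$ the corresponding reflection, while you give an explicit five-factor decomposition of a rotation into four flips and one rectangle transposition, which slab-extends to higher dimensions. Your decomposition works (as written it produces the rotation by $\ell-a$ rather than $a$ under standard right-to-left composition, which is harmless), and it is more self-contained since it bypasses both the commutator identity and the deferred conjugation lemma, at the cost of more bookkeeping. One small tightening you could borrow from the paper: in the first step, choosing $R$ so that $f(R)\cap R=\emptyset$ makes $[f,\sigma]$ a disjoint-support product of two transpositions, which is visibly nontrivial, whereas your ``generic choice of $T_1,T_2$'' requires a brief case analysis to rule out accidental coincidences.
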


This is known for $d=1$ (proved in Arnoux's thesis \cite{ArnouxThese} and reproduced in the appendix of \cite{GuelmanLiousse2021}). The result in general follows with little effort from the previous results.

An observation (Proposition \ref{ori_lift}) is that $\Rec_d^{\bowtie}$ can be embedded in $\Rec_d$. A general construction of Nekrashevych can be used in this context to yield the following (see Section \ref{torsion_section}).

\begin{Thm}
There exists an explicit infinite finitely generated subgroup of $\Rec_3^{\bowtie}$ (and hence of $\Rec_3$) that is infinite and torsion.
\end{Thm}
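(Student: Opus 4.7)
The plan is to realize an infinite finitely generated torsion self-similar group -- concretely the Grigorchuk group $\Gamma$ -- as a subgroup of $\Rec_3^{\bowtie}$, via Nekrashevych's fragmentation technique. By Proposition \ref{ori_lift}, $\Rec_3^{\bowtie}$ embeds in $\Rec_3$, so the subgroup we produce inside $\Rec_3^{\bowtie}$ automatically lies in $\Rec_3$ as well, and it suffices to construct it in $\Rec_3^{\bowtie}$.

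The Grigorchuk generators $a,b,c,d$ act on the Cantor set $C=\{0,1\}^{\NN}$ with a self-similar recursive description: each either swaps or preserves the two halves of $C$, and then acts on each half by another generator. Under the identification $C\cong[0,1]$ via binary expansion, they are not interval exchanges, as the recursion produces infinitely many breakpoints, so they do not lie in $\IET=\Rec_1$. Nekrashevych's general construction instead encodes such a self-similar action by \emph{finitely many} finite piecewise isometries, at the price of enlarging the ambient space. Concretely, I would identify $[0,1]^3\cong C\times C\times C$ and use one of the extra Cantor factors as a ``depth'' coordinate that records the current stage of the recursion: a generator's action on the first $C$-factor is implemented as a rectangle exchange which performs one swap at the current depth and simultaneously moves a rectangle to the next depth level, where the next stage of the recursion will be executed when the generator is applied again. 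The flips available in $\Rec_3^{\bowtie}$ are used to realize the self-similar subtree actions with the correct orientation conventions. Under this encoding, each of $a,b,c,d$ becomes a finite rectangle exchange with flips, so $\Gamma$ maps into $\Rec_3^{\bowtie}$ with finitely many generators.

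The main obstacle is the combinatorial bookkeeping: one must check that the encoding (i) uses only finitely many rectangles for each generator, (ii) respects the defining relations of $\Gamma$, and most delicately (iii) is faithful, so that no new relations are introduced and the torsion property of $\Gamma$ transfers to its image. Faithfulness is the crux: it is arranged by choosing the encoding so that distinct elements of $\Gamma$ produce distinct partitions of $[0,1]^3$, tracked through the nested rectangles at each depth level. Once (iii) is secured, the image is a finitely generated infinite torsion subgroup of $\Rec_3^{\bowtie}$, completing the proof; dimension $3$ is used because the extra Cantor factor needed to host the depth coordinate, together with the factor carrying the self-similar action itself and the factor used to implement the flips, naturally fills three dimensions.
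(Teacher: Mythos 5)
Your plan identifies the correct high-level strategy (invoke Nekrashevych's fragmentation machinery, realize the resulting group by rectangle exchanges with flips, then pass to $\Rec_3$ via Proposition \ref{ori_lift}), but the concrete proposal diverges sharply from what the paper does, and I believe it cannot be made to work as stated.

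First, the paper does \emph{not} embed the Grigorchuk group. The torsion group produced is a new group, obtained by applying Nekrashevych's fragmentation theorem (Theorem \ref{Nekt}) to the infinite dihedral group $\langle a,b\rangle$ generated by two concrete involutions $a(x)=v_0-x$, $b(x)=-x$ of the torus $T^3$ (with $v_0$ totally irrational, so the action is minimal). The Grigorchuk group plays no role. This matters because, as you correctly observe, the Grigorchuk generators $b,c,d$ genuinely have infinitely many discontinuities as maps on the Cantor set and hence cannot be realized as rectangle exchanges; Nekrashevych's theorem avoids this issue entirely, because the fragmented generators $b_1,\dots,b_n$ are literal restrictions of $b$ to finitely many clopen sets and therefore are themselves finite rectangle exchanges.

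Second, your proposed fix --- a ``depth coordinate'' that advances one level of the recursion each time a generator is applied --- destroys the group structure you are trying to preserve. If $\tilde a$ performs one swap at depth $k$ and then moves the marker to depth $k+1$, then $\tilde a^2$ is not the identity, so $\tilde a$ is not an involution, breaking the relation $a^2=1$ of the Grigorchuk group immediately; the problem worsens for all other relations. There is no consistent way to make a generator simultaneously ``act only at the current depth'' and be an involution whose action on the whole space matches the recursive definition. Nothing in your sketch addresses this, and the ``faithfulness is the crux'' paragraph defers rather than resolves it.

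Third, your account of why dimension $3$ is needed is not the real reason. In the paper, the constraint is purely combinatorial in Nekrashevych's theorem: one needs a clopen partition $X_1,\dots,X_n$ with $n\geq 3$ (so that $(\ZZ/2\ZZ)^n$ admits a subgroup $H$ avoiding the diagonal with all projections surjective). The partition is built from the $2^d$ octants of the cube $[-1/2,1/2]^d$ paired by $x\mapsto -x$, yielding $2^{d-1}$ pieces; $d=3$ gives $n=4 \geq 3$ while $d=2$ gives only $n=2$. There is no ``depth Cantor factor'' and no ``factor used to implement the flips.''

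Finally, even the topological setup differs: the paper works on the torus $T^3$ and passes to the Denjoy-doubled Cantor space $\bar T^3$ (and then to an intermediate quotient $K$, Lemma \ref{cantor}) to apply Nekrashevych's theorem to a genuine Stone space; you instead want to identify $[0,1]^3$ with $C\times C\times C$ directly, which does not match the piecewise-translation structure of $\Rec_3^{\bowtie}$. In short: the skeleton (use Nekrashevych + $\Rec_3^{\bowtie}\hookrightarrow\Rec_3$) is right, but the body of the argument --- embedding Grigorchuk, the depth coordinate, the explanation of $d=3$ --- is not the paper's proof, and the depth-coordinate idea has a genuine gap that I do not see how to repair.
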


It is explicit in the sense that it is generated by 3 explicit elements of order 2.

When $d=2$ do not know whether $\Rec_d$ contains any infinite finitely generated torsion subgroup (for $d=1$ this is a well-known open question).

Let us now consider a slightly more general, and more natural, framework. If $M$ is a multirectangle in $\RR^d$, define $\Rec_d(M)$ as the group of rectangle exchange self-transformations of $M$. Thus $\Rec_d=\Rec_d([0,1\mathclose[^d)$. For $d=1$, it is straightforward to see that all such groups are isomorphic (for $M$ nonempty). In dimension $\ge 2$ this is probably false. (See also \S\ref{rec_torus} for $\Rec_d(T)$ when $T$ is a torus, that is, the quotient of $\RR^d$ by a lattice.)

Note that even in the study of $\Rec_d$, such groups unavoidably appear: if $M\subset [0,1\mathclose[^d$ is a multirectangle, the group $\Rec_d(M)$ can be viewed as a subgroup of $\Rec_d$, namely those elements that are identity outside $M$. Note that for $M,M'$ homothetic, $\Rec_d(M)$ and $\Rec_d(M')$ are isomorphic. Hence, for any nonempty multirectangles $M,M'$ in $\RR^d$, the groups $\Rec_d(M)$ and $\Rec_d(M')$ embed into each other.

\begin{Coro}
For every $d\ge 1$ and nonempty multirectangle $M$ in $\RR^d$, the group $\Rec_d(M)$ has a simple derived subgroup.
\end{Coro}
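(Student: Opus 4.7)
The plan is to carry out the proofs of Theorems~\ref{rec_shu_gen} and~\ref{Theorem D(REC) is generated by Transpo} in the relative setting where the ambient set is the multirectangle $M$ instead of $[0,1\mathclose[^d$, reading off simplicity of $D(\Rec_d(M))$ directly from the adapted argument. The three ingredients I would verify are: (i) restricted shuffles supported in $M$ generate $\Rec_d(M)$; (ii) the derived subgroup $D(\Rec_d(M))$ is generated by rectangle transpositions supported in $M$; and (iii) any two pairs of interior-disjoint translated ``small enough'' rectangles in $M$ are $\Rec_d(M)$-conjugate, i.e.\ some element of $\Rec_d(M)$ maps one pair to the other.

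The arguments for (i) and (ii) should go through without real change: the proof of Theorem~\ref{rec_shu_gen} only manipulates combinatorial partitions of the ambient set into rectangles and nowhere exploits that this set is a cube, so the same reduction reduces a general $f\in\Rec_d(M)$ to the identity; similarly, the commutator manipulations in the proof of Theorem~\ref{Theorem D(REC) is generated by Transpo} are local, take place in tiny subrectangles, and apply unchanged. For (iii), sufficiently small pairs of translated rectangles in $M$ both fit into a common cube $C\subset M$, and inside $\Rec_d(C)\cong\Rec_d$ such a rearrangement is easy (cubes $C\subset M$ exist because $M$ is a nonempty multirectangle, and the homothety observation following the definition of $\Rec_d(M)$ gives the isomorphism). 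The simplicity conclusion is then the usual one: for a nontrivial normal subgroup $N\trianglelefteq D(\Rec_d(M))$ and a nontrivial $g\in N$, a commutator trick with an element of very small support produces a rectangle transposition in $N$; (iii) then spreads it to every rectangle transposition supported in $M$; and (ii) concludes $N=D(\Rec_d(M))$.

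The main obstacle is (iii), since the shape of a general multirectangle $M$ could in principle obstruct free rearrangement of the rectangles contained in it. The resolution is that the commutator trick in the previous step only needs to produce rectangle transpositions on rectangles of arbitrarily small diameter, which always fit inside some cube $C\subset M$, and one falls back on the cube case where the required transitivity is immediate. Modulo this reduction, no additional input beyond the proofs of Theorems~\ref{rec_shu_gen} and~\ref{Theorem D(REC) is generated by Transpo} is needed.
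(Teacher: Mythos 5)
Your overall skeleton — generation, then generation of the derived subgroup by rectangle transpositions, then a commutator trick plus transitivity on small transpositions — does match the structure of Section~\ref{Section The derived subgroup} in the paper. But step (i) has a genuine gap, and it is the load-bearing one.

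First, the statement of (i) is simply false when the interior of $M$ is disconnected: every restricted shuffle is supported in a single rectangle, hence preserves each connected component of the interior, so the subgroup they generate is $\prod_i \Rec_d(R_i)$ where the $R_i$ are the components (this is pointed out in the remark after Corollary~\ref{gen_shuffles_multirec}). Meanwhile $\Rec_d(M)$ contains rectangle transpositions exchanging a piece of one component with a translated piece of another. Second, even for $M$ with connected interior, your claim that the proof of Theorem~\ref{rec_shu_gen} ``nowhere exploits that this set is a cube'' is incorrect: the whole ground/tower/city/sky machinery of Section~\ref{Section Generation by restricted shuffles} is anchored to the product structure of $[0,1\mathclose[^d$ (the ground is defined by $0\in\pr_d(K)$, towers grow in the $d$-th direction, etc.), and the terminal Lemma~\ref{Lemma case of an empty complexity} lands on a grid-pattern, a notion that has no analogue in a general multirectangle. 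The paper's proof of generation by restricted shuffles for connected multirectangles (Corollary~\ref{gen_shuffles_multirec}) is a \emph{separate} argument: it expresses $M$ as an increasing union of overlapping rectangles and uses Lemma~\ref{stabmax} (maximality of the partition stabilizer $\Gamma\{A,B\}$) to absorb each new rectangle; it does not re-run Section~\ref{Section Generation by restricted shuffles}.

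Given this, your path to (ii) also needs patching. The paper handles the general multirectangle by a reduction that you have not made: any nonempty multirectangle $M$ is Rec-isomorphic (Lemma~\ref{vl_rec}) to one with connected interior, the isomorphism takes restricted shuffles and rectangle transpositions to restricted shuffles and rectangle transpositions and hence identifies derived subgroups; so it suffices to know (i)--(ii) for connected $M$, where Corollary~\ref{gen_shuffles_multirec} applies. With that substitution for your (i), the rest of your outline — cutting transpositions small, conjugating them into a fixed small cube $U$ (your (iii), which is Proposition~\ref{Proposition Generating subsets of D(REC)}(c)), and the commutator trick — does align with the paper's Theorem~\ref{Theorem Simplicity of D(REC)}. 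In short: the plan is sound, but (i) as you argue it is wrong, both as a statement for disconnected $M$ and as a ``proof goes through unchanged'' claim; you need the Rec-isomorphism reduction plus the separate connected-case generation argument.
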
 

\begin{Coro}
For every $d\ge 1$ and multirectangle $M$ in $\RR^d$ with connected interior, the group $\Rec_d(M)$ is generated by restricted rotations.
\end{Coro}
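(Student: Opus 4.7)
The plan is to combine Theorem \ref{rec_shu_gen} with two further reductions.

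First, every restricted shuffle of $\Rec_d$ is itself a product of restricted rotations. A restricted shuffle $s$ is supported in a union $R_1 \cup R_2$ of two consecutive rectangles, which is itself a rectangle $\interfo{a}{c} \times Q$ (take the common facet to be perpendicular to the first axis, so $Q$ is a $(d-1)$-dimensional box). The shuffle further partitions $\interfo{a}{c}$ into sub-intervals $\interfo{a_i}{a_{i+1}}$ and permutes the resulting slabs $\interfo{a_i}{a_{i+1}} \times Q$ by translations along the first coordinate. Treating $Q$ as an inert factor, $s$ coincides with an element of $\IET$ acting on $\interfo{a}{c}$. The $d=1$ case of Theorem \ref{rec_shu_gen}, i.e., the classical fact recalled right after Theorem \ref{rec_shu_gen} that $\IET$ is generated by $2$-interval exchanges, expresses this IET as a product of restricted rotations of $\interfo{a}{c}$; each such factor lifts to a restricted rotation of $\Rec_d$ swapping two consecutive congruent slabs inside $R_1 \cup R_2$.

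Second, I reduce to proving that $\Rec_d(M)$ is generated by those restricted shuffles of $\Rec_d$ whose support lies inside $M$. When $M$ is a single rectangle $R$, a coordinate-wise affine bijection $R \to \interfo{0}{1}^d$ preserves the rectangle structure and yields a group isomorphism $\Rec_d(R) \cong \Rec_d$ matching restricted shuffles with restricted shuffles; Theorem \ref{rec_shu_gen} then gives the claim. For general $M$ with connected interior, the strategy is to adapt the proof of Theorem \ref{rec_shu_gen} to the subgroup $\Rec_d(M)$: given $f \in \Rec_d(M)$, take a rectangle partition $\mathcal{P}$ of $M$ simultaneously compatible with $f$ and $f^{-1}$, and run the combinatorial induction of Theorem \ref{rec_shu_gen} using only restricted shuffles whose two rectangles are adjacent pieces of $\mathcal{P}$, hence supported in $M$.

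The main obstacle is verifying that this adaptation goes through. The proof of Theorem \ref{rec_shu_gen} requires being able to find, at each inductive step, a pair of adjacent rectangles in a compatible partition, and in $\Rec_d$ this uses connectedness of $\interfo{0}{1}^d$. The connected-interior hypothesis on $M$ supplies precisely the analogous property: the adjacency graph on $\mathcal{P}$, with edges given by shared facets of positive $(d-1)$-measure, is connected, so pairs of adjacent rectangles $P,P'$ with $P\cup P' \subseteq M$ are always available. Substituting this into the argument of Theorem \ref{rec_shu_gen} transfers it verbatim to $\Rec_d(M)$, and combining with the first reduction yields the desired generation of $\Rec_d(M)$ by restricted rotations.
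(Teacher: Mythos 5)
The first reduction rests on a misreading of the definitions. A restricted shuffle $\sigma_{R,s,i}$ already exchanges exactly two consecutive rectangles, since $s$ is by definition a restricted rotation with a single interior discontinuity; there is no further decomposition of the form you describe, and the wording ``restricted rotations'' in the statement of this corollary is the paper's synonym for restricted shuffles (compare Corollary \ref{gen_shuffles_multirec}, the body-of-paper version of the same statement). What you call a restricted rotation ``swapping two consecutive congruent slabs'' is a rectangle transposition, and your claim that every restricted shuffle is a product of those is false: rectangle transpositions generate the derived subgroup $D(\Rec_d)$ (Theorem \ref{Theorem D(REC) is generated by Transpo}), while a restricted shuffle with $\QQ$-linearly independent side lengths $a,b$ has nonzero image under the generalized SAF homomorphism $T$, hence lies outside $D(\Rec_d)$. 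So the first step is either vacuous (if ``restricted rotation'' just means restricted shuffle) or wrong (under your congruent-slab reading).

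The genuine gap is the final step. You assert that the proof of Theorem \ref{rec_shu_gen} transfers ``verbatim'' to $\Rec_d(M)$ once one observes that the adjacency graph of a rectangle partition of $M$ is connected, but this is not what that proof relies on. Theorem \ref{rec_shu_gen} is anchored in setwise $\QQ$-free \emph{grid-patterns}, which by definition are products $\cQ_1\times\dots\times\cQ_d$ of one-dimensional interval partitions and hence only exist when the domain is a single rectangle. The whole tower/city/sky induction on $\mathscr{C}(\cP)$ (Definition \ref{Definition tower above a piece} through Lemma \ref{Lemma diminution of the cardinal of the complexity}) depends on this product structure, on a distinguished vertical coordinate, and on the fact that the ground tiles a full $(d-1)$-rectangle; none of this applies to a general multirectangle, and you give no argument for how to adapt it. The paper instead proves a maximal-subgroup statement (Lemma \ref{stabmax}), deduces Corollary \ref{gen_two_multi} (if $M=M_1\cup M_2$ with $M_1\cap M_2\neq\emptyset$ then $\Rec(M)=\langle\Rec(M_1),\Rec(M_2)\rangle$), and then chops $M$ into a chain of rectangles with overlapping consecutive pieces, reducing by induction to the single-rectangle case. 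Your affine-bijection observation for a single rectangle is correct, but the passage from one rectangle to a connected multirectangle is where the paper does real work, via a soft maximal-subgroup argument, whereas your proposal leaves it as an unsubstantiated assertion.
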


(Note that connectedness of the interior is an obvious necessary condition.)

Let us now pass to mostly open questions.

A natural question is to classify the groups $\Rec_d(M)$ up to isomorphism.

 The action of $\Rec_d(M)$ preserves the tensor volume of multirectangles, and actually this determines orbits of the action on the set of sub-multirectangles:

\begin{Prop}\label{vol_orb}
For multirectangles $M_1,M_2\subseteq M$ there exists $f\in\Rec_d(M)$ such that $f(M_1)=M_2$ if and only of $\vl_d(M_1)=\vl_d(M_2)$.
\end{Prop}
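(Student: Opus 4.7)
The forward direction is immediate: every $f \in \Rec_d(M)$ admits a rectangle partition of $M$ on which $f$ acts by translations, and refining this to a partition compatible with $M_1$, together with the additivity and translation-invariance of $\vl_d$ from Proposition \ref{tensor_measure}, gives $\vl_d(f(M_1)) = \vl_d(M_1)$.

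For the converse, my plan is to reduce to a translation scissors-congruence lemma: if $N_1, N_2 \subseteq \RR^d$ are multirectangles with $\vl_d(N_1) = \vl_d(N_2)$, then there exist rectangle partitions $N_1 = \bigsqcup_{i=1}^k R_i$ and $N_2 = \bigsqcup_{i=1}^k R_i'$ with each $R_i'$ a translate of $R_i$. Granted this lemma, I would apply it once to $(M_1, M_2)$ and once to $(M \setminus M_1, M \setminus M_2)$; the latter pair has equal tensor volume by additivity $\vl_d(M) = \vl_d(M_i) + \vl_d(M \setminus M_i)$ for $i=1,2$. Concatenating the two translation-matched decompositions yields a rectangle partition of $M$ together with a matching partition of its image, i.e.\ an element $f \in \Rec_d(M)$ with $f(M_1)=M_2$ (note that the translates automatically stay inside $M$, since the image of each piece of $M_1$ lands in $M_2 \subseteq M$ and likewise for the complements).

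The main work lies in the scissors-congruence lemma, which I would prove by induction on $d$. The case $d=1$ is an easy greedy left-to-right matching of intervals of equal total length. For $d \geq 2$ the direct approach---slicing along one axis and invoking the $(d-1)$-dimensional case---fails, because cross-sectional tensor volumes of individual slabs need not agree. My workaround is to first refine $N_1$ and $N_2$ to a common axis-aligned grid, then exploit the $\QQ$-vector-space structure of $\RR$: picking a $\QQ$-basis $\{e_{i,k}\}$ of $\Vect_\QQ(S_i) \subseteq \RR$ in each direction $i$ (where $S_i$ denotes the finite set of side-lengths appearing in direction $i$) and expanding, the identity $\vl_d(N_1) = \vl_d(N_2)$ becomes a $\QQ$-linear equality in the basis $\{e_{1,k_1} \otimes \cdots \otimes e_{d,k_d}\}$ of the finite-dimensional subspace $V_1 \otimes \cdots \otimes V_d \subseteq \RR^{\otimes d}$. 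A combinatorial rearrangement---further subdividing grid cells so that their sides are positive rational combinations of basis elements and matching pure-tensor summands with equal coefficients---then produces the required geometric pairing.

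This $\QQ$-basis reduction is where I expect the technical heart of the proof to lie: it converts the tensor-volume identity, which lives in the infinite-dimensional space $\RR^{\otimes d}$, into a purely combinatorial equidecomposability problem over a finite-dimensional $\QQ$-subspace, on which the matching can be carried out explicitly by induction on the grid size.
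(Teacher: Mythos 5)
Your overall structure is the paper's: reduce the proposition to a translation-equidecomposability lemma (this is exactly Lemma \ref{vl_rec}), and then apply that lemma once to $(M_1,M_2)$ and once to $(M\smallsetminus M_1, M\smallsetminus M_2)$ to build the element of $\Rec_d(M)$. The derivation of the proposition from the lemma is fine, including the observation that pieces stay inside $M$.

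The gap is in your proof of the lemma, at exactly the step you flag as the ``technical heart.'' You propose to pick a $\QQ$-basis of $\Vect_\QQ(S_i)$ and then ``further subdivide grid cells so that their sides are positive rational combinations of basis elements.'' But this is not something subdivision can achieve for an \emph{arbitrary} basis: if a side length $s$ has a negative coordinate in the chosen basis (say $s = 2b_1 - b_2$), then any decomposition $s=\sum_j\ell_j$ forces $\sum_j \ell_j$ to have a negative coordinate, so at least one $\ell_j$ does too, no matter how finely you cut. What you actually need is a $\QQ$-free set $B_i$ of \emph{positive} reals whose positive cone (over $\ZZ$, or at least over $\QQ_{\ge 0}$ after rescaling) contains all of $S_i$. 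The existence of such a set is not a formal consequence of linear algebra and is precisely what Eliott's theorem (Theorem \ref{Theorem ultrasimplicially ordered group} in the paper) provides; the paper's proof of Lemma \ref{vl_rec} invokes it to refine the partitions so that all $i$-side-lengths lie in a $\QQ$-free set $S_i$, after which the pure tensors $\bar{s}=s_1\otimes\cdots\otimes s_d$ are $\QQ$-free and the equality $\sum_s n(s)\bar{s}=\sum_s n'(s)\bar{s}$ forces $n(s)=n'(s)$ for each shape $s$, giving the shape-preserving bijection directly (no induction on $d$ or on grid size needed). So your plan is aimed in the right direction, but you have in effect rediscovered the need for ultrasimpliciality without proving or citing it, and that is the load-bearing step. (A minor point: once the $\QQ$-free generating set is in hand, the counting argument is immediate and your proposed ``induction on grid size'' is unnecessary.)
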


Define the monomial group as the subgroup of $\GL_d(\RR)$ generated by diagonal and permutation matrices, and denote it by $\Mon_d(\RR)$ (it is isomorphic to ${\RR^*}^d\rtimes \mathfrak{S}_d$). For $g\in\Mon_d(\RR)$, one easily sees that $g$ conjugates $\Rec_d(M)$ to $\Rec_d(g(M))$. Combining with the previous proposition, we deduce:

\begin{Prop}
For multirectangles $M_1,M_2$ in $\RR^d$, if $\vl_d(M_1)$ and $\vl_d(M_2)$ are in the same orbit under the canonical $\Mon_d(\RR)$-action on $\RR^{\otimes d}$, then $\Rec_d(M_1)$ and $\Rec_d(M_2)$ are isomorphic.
\end{Prop}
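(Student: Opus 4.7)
The plan is to chain the preceding proposition (any $g\in\Mon_d(\RR)$ conjugates $\Rec_d(N)$ to $\Rec_d(g(N))$) with Proposition \ref{vol_orb} (equal tensor volume in a common ambient multirectangle implies same $\Rec_d$-orbit). Starting from $g\in\Mon_d(\RR)$ with $g\cdot\vl_d(M_1)=\vl_d(M_2)$, the first step is to verify equivariance of the tensor volume: $\vl_d(g(M_1))=g\cdot\vl_d(M_1)$. This is a direct computation from the defining formula for $\vl_d$ on a single rectangle, its additivity under disjoint unions, and the componentwise $\Mon_d(\RR)$-action on $\RR^{\otimes d}$ (permutations permute tensor slots; diagonal matrices scale each slot). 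The only nuance is the convention for $g(N)$ when some diagonal entry of $g$ is negative, since the set-theoretic image no longer respects the left-closed right-open convention; this issue has already been settled uniformly in the preceding proposition, and I reuse its convention. The upshot is $\vl_d(g(M_1))=\vl_d(M_2)$.

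The second step reduces matters to Proposition \ref{vol_orb}. The isomorphism class of $\Rec_d(N)$ is translation-invariant in $N$: for any $v\in\RR^d$, translation by $v$ induces a canonical isomorphism $\Rec_d(N)\cong\Rec_d(N+v)$. After such a translation I may assume $g(M_1)$ and $M_2$ have disjoint closures, and set $M:=g(M_1)\sqcup M_2$. Then $g(M_1)$ and $M_2$ are sub-multirectangles of $M$ with equal tensor volume, so Proposition \ref{vol_orb} yields $f\in\Rec_d(M)$ with $f(g(M_1))=M_2$. Conjugation by $f$ inside $\Rec_d(M)$ restricts to an isomorphism $\Rec_d(g(M_1))\to\Rec_d(M_2)$, since elements supported on $g(M_1)$ are mapped to elements supported on $M_2$. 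Composing with the isomorphism $\Rec_d(M_1)\to\Rec_d(g(M_1))$ from the preceding proposition yields the desired $\Rec_d(M_1)\cong\Rec_d(M_2)$.

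The argument is essentially formal given the two cited inputs. The only technical point is the equivariance $\vl_d\circ g=g\circ\vl_d$, and within that the only subtlety is the sign-bookkeeping for negative diagonal entries of $g$; since the convention for $g(N)$ is already fixed compatibly in the preceding proposition, this reduces to a short verification on single rectangles extended by additivity. I expect this sign-bookkeeping to be the main (but mild) obstacle.
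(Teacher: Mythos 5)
Your proof is correct and follows exactly the route the paper intends: the paper gives no explicit proof, only the remark that combining the $\Mon_d(\RR)$-conjugation observation with Proposition~\ref{vol_orb} yields the statement, and your argument fills in precisely those details (equivariance of $\vl_d$, translating so $g(M_1)$ and $M_2$ fit disjointly into a common ambient $M$, applying Proposition~\ref{vol_orb} there, and conjugating). One small remark: you could skip the translation-and-common-ambient step by invoking Lemma~\ref{vl_rec} directly, which gives a $\Rec$-isomorphism $g(M_1)\to M_2$ from $\vl_d(g(M_1))=\vl_d(M_2)$ without requiring a shared containing multirectangle.
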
 

Our main open question is whether the converse holds.

\begin{Que}\label{isobrec}
Conversely, for nonempty multirectangles $M_i\in \RR^{d_i}$, $i=1,2$, if $\Rec_{d_1}(M_1)$ and $\Rec_{d_2}(M_2)$ are isomorphic, does it follow that $d_1=d_2$ and $\vl_d(M_1)=\vl_d(M_2)$?
\end{Que}

If we only focus on the dimension issue, one can ask about a stronger rigidity:

\begin{Que}
For nonempty multirectangles $M_i\in \RR^{d_i}$, $i=1,2$, if $\Rec_{d_1}(M_1)$ (or its derived subgroup) embeds as a subgroup of $\Rec_{d_2}(M_2)$, does it follow that $d_1\le d_2$?
\end{Que}

Question \ref{isobrec} asks about the existence of isomorphisms. The following asks about a precise description of isomorphisms, and would imply a positive answer to Question \ref{isobrec}.

\begin{Que}\label{isomo_rec}
For multirectangles $M_1,M_2$ in $\RR^d$, and an isomorphism $f:\Rec_d(M_1)\to \Rec_d(M_2)$, does there exist $g\in\Mon_d(\RR)$ such that, $g_*$ denoting the isomorphism $\Rec_d(M_2)\to\Rec_d(g(M_2))$ induced by $g$, the composite map $g_*\circ f$ is induced by a Rec-isomorphism from $M_1$ into $g(M_2)$?
\end{Que}

Rubin's theorem \cite[Corollary 3.5]{Rubin} ensures that each such isomorphism is induced by conjugation by a homeomorphism $\bar{M}_1\to\bar{M}_2$. Here, roughly, $\bar{M}$ denotes $M$ with each point blown-up to $2^n$ points (one choice for each direction). More precisely, $\bar{\RR}$ means $\RR$ with each point $x$ replaced with a pair $\{x_-;x_+\}$, with the order topology. Then $\bar{M}\subset\bar{\RR}^d$ means the interior of the set of points mapping to the closure of $M$. So the question is whether such a homeomorphism is necessarily composition of a Rec-isomorphism and a monomial map. This question is not only relevant to classify the groups $\Rec_d(M)$ up to isomorphism (when $M$ varies), but also (when $M_1=M_2=M$) to understand the automorphism group of the groups $\Rec_d(M)$. Question \ref{isomo_rec} has a positive answer when $d=1$, where essentially it asserts that the outer automorphism group of $\IET$ has order 2, a result of Novak~\cite{Novak}.

Another question would be to describe a presentation of $\Rec_d$ using the set of restricted shuffles as set of generators. Of course this question is imprecise, or has a trivial answer: take all relations as set of relators. The point is rather to exhibit a natural family of relators. Specifically, we can ask the following, which even for $d=1$ is unknown:

\begin{Que}
Is $\Rec_d$ boundedly generated over the set of restricted shuffles? That is, does there exists $N$ such there is presentation of $\Rec_d$ with all restricted shuffles as set of generators, and relators of length $\le N$?
\end{Que}

A possible motivation for exhibiting ``nice'' presentations would be to solve the following, even for $d=1$:

\begin{Que}
What is the second homology group $H_2(\Rec_d)$? is $H_2(D(\Rec_d))$ reduced to $\{0\}$?
\end{Que}

Last and not least, let us ask:

\begin{Que}
Is $\Rec_d$ amenable? Does it fail to contain any non-abelian free subgroup?
\end{Que}

For $d=1$ these are well-known questions; the amenability question is raised in \cite{CorBou} and the question of (non)-existence of a free subgroup is due to A.~Katok. In this direction, let us mention the easy:

\begin{Prop}\label{recfm}
The group $\Rec_d$ has no infinite subgroup with Property FM. In particular, it has no infinite subgroup with Kazhdan's Property T.
\end{Prop}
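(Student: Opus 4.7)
The plan is to prove the first statement; the second follows since Property~(T) implies Property~FM. Arguing by contradiction, suppose $H\leq \Rec_d$ is an infinite subgroup with Property~FM. Two features of Property~FM drive the argument: it passes to finite-index subgroups and to quotients, and every infinite amenable group fails it (the left regular action of an infinite amenable group on itself is transitive, admits an invariant mean, and has no finite orbit). In particular, every abelian quotient of a Property~FM group is finite.

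The first step is a reduction via the generalized SAF. Applying the homomorphism $\tau$ of Theorem~\ref{Theorem Generalized SAF} to $H$, the abelian quotient $\tau(H)$ inherits Property~FM and is therefore finite. Hence $H_0:=H\cap\ker(\tau)=H\cap D(\Rec_d)$ has finite index in $H$, is still infinite, and still has Property~FM; it suffices to contradict Property~FM for $H_0$.

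The main step would be to exhibit an amenable $H_0$-action on a countable set without any finite orbit. The natural candidate is an $H_0$-orbit $Y=H_0\cdot y_0\subseteq [0,1\mathclose[^d$ for a suitable $y_0$, which is countable and can be arranged to be infinite by choosing $y_0$ outside the union of fixed sets of a prescribed generating family of nontrivial elements of $H_0$. The key structural input is that $H_0$ preserves the Lebesgue measure on $[0,1\mathclose[^d$: the aim is to construct a finitely additive $H_0$-invariant probability measure on $Y$ as a weak-$*$ accumulation point of normalized Lebesgue measures supported on shrinking $\eps$-neighborhoods of finite approximations of $Y$, exploiting that elements of $\Rec_d$ act as piecewise translations with only finitely many breakpieces and hence affect only a vanishing proportion of any sufficiently fine Lebesgue-like discretization. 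The hard part will be verifying that this limiting mean is genuinely nontrivial, concentrated on the countable set $Y$ (rather than spilling into the ambient continuum), and $H_0$-invariant after passage to the limit; once this is established, Property~FM for $H_0$ is contradicted and $H$ must be finite.
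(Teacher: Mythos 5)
Your proposal is headed in a vaguely correct direction (exploiting bounded displacement and the preservation of translation lengths), but the core step, as you yourself acknowledge, is unverified and, as described, does not work. The weak-$*$ accumulation point of normalized Lebesgue measures on shrinking $\eps$-neighborhoods of finite subsets of an orbit $Y$ is a Borel probability measure on the compact cube $[0,1]^d$, not a finitely additive mean on the countable set $Y$: nothing forces the mass to concentrate on $Y$ rather than dissipate into the ambient continuum, and indeed if $Y$ is equidistributed these measures just converge to Lebesgue measure. In other words, you cannot get an invariant mean on $Y$ by this route; the mechanism you describe conflates measures on the cube with means on an orbit and leaves the hardest part (nontriviality and concentration on $Y$) entirely unaddressed. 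The preliminary SAF reduction to $H_0 = H\cap D(\Rec_d)$ is correct but unnecessary and buys nothing toward closing this gap.

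The paper's actual proof avoids the issue by invoking prior machinery rather than building the invariant mean from scratch. First, Property~FM implies $\Gamma$ is finitely generated (same argument as for Property~T), so one can find a finitely generated dense subgroup $\Lambda\subseteq\RR^d$ containing all translation lengths of elements of $\Gamma$. Then $\Gamma$ acts faithfully on $\Lambda$ (as a countable set, or as the vertex set of a Cayley graph) by bounded-displacement permutations, giving an injection $\Gamma\hookrightarrow\mathrm{Wob}(\Lambda)$. The conclusion then follows from the known theorem (\cite[Theorem 7.1(2)]{cornulier2015irreducible}, following \cite[Theorem 4.1]{juschenko2015invariant}) that the wobbling group of a graph of uniform subexponential growth has no infinite subgroup with Property~FM. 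That theorem is exactly the invariant-mean construction you were groping for, but carried out correctly in a combinatorial setting (means on $\Lambda$ built from Følner-type exhaustions of the graph, not from Lebesgue measure on the cube). To salvage your argument you would essentially need to rediscover and prove this wobbling-group result, which is far from immediate.
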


See Section \ref{s_kaz} for the short proof. Recall that a group $\Gamma$ has Property FM \cite{cornulier2015irreducible} if every $\Gamma$-set with an invariant mean has a finite orbit. In particular, an amenable group with Property FM has to be finite. Proposition \ref{recfm} was obtained in the IET case ($d=1$) in \cite[Theorem 6.1]{DFG2013} (stated for Property T, but using only Property FM).

\medskip

{\bf Outline.} The main definitions are given in \S\ref{precise}. In \S\ref{s_tensor_volume} we establish some basic facts based on a classical theorem of Eliott about totally ordered abelian groups. We then proceed to the proof of Theorem \ref{rec_shu_gen}: after some easy cases in \S\ref{gen_easy}, we describe the general procedure in \S\ref{Section Generation by restricted shuffles} (eventually boiling down to these easy basic cases). In \S\ref{Section A refinement for REC 2}, we prove a ``jigsaw'' refinement of Theorem \ref{rec_shu_gen} in dimension 2. We extend Theorem \ref{rec_shu_gen} to multirectangles in \S\ref{gen_multir}, exhibiting, along the way, some maximal subgroups of $\Rec_d$. In \S\ref{Section The derived subgroup}, we notably prove simplicity of the derived subgroup of $\Rec_d$. In \S\ref{Section Abelianization of REC}, we describe the generalized SAF-homomorphism and prove that it is indeed the abelianization homomorphism. In \S\ref{sflip}, we address rectangle exchanges with flips, notably establishing the simplicity of this group. Finally, in \S\ref{torsion_section}, we exhibit a infinite, finitely generated torsion subgroup in $\Rec_3$.

\section{Precise main definitions}\label{precise}

We recall that the group $\IET$ is the group consisting of all permutations of $\interfo{0}{1}$ continuous outside a finite set, right-continuous and piecewise a translation. 

We study a generalization of $\IET$ in higher dimension. Let $d \geq 1$ be an integer. We denote by $X=\interfo{0}{1}^d $ the left half-open square of dimension $d$. Let $\cB= \lbrace e_1, e_2, \ldots , e_d \rbrace$ be the canonical basis of $\RR^d$ and we denote by $\lambda$ the Lebesgue measure on $\RR$. For $1 \leq i \leq d$, let $\pr_i$ be the orthogonal projection on $\Vect(e_i)$ and $\pr_i^{\bot}$ be the orthogonal projection on the hyperplane $e_i^{\bot}$. For an element $x \in \RR^d$ we use the notation $x_i=\pr_i(x)$. A natural way to generalize left half-open intervals is to consider elements of the form $I_1 \times \ldots \times I_d$ where $I_i$ is a left half-open subinterval of $\interfo{0}{1}$. They are called left half-open $d$-rectangles. In the following, every $d$-rectangle is supposed to be left half-open.

We define the rectangle exchange transformations group of dimension $d$, denoted by $\Rec_d$, as the set of all permutations $f$ of $\interfo{0}{1}^d$ such that there exists a finite partition of $\interfo{0}{1}^d$ into $d$-rectangles such that $f$ is a translation on each of these $d$-rectangles. Elements of $\Rec_d$ are called $d$-{\bf rectangle exchange transformations}.

In the sequel, all partitions are meant to be finite. The simplest partitions into rectangles are the following:

\begin{Def}
A partition $\cP$ of $\interfo{0}{1}^d$ into rectangles is called a {\bf grid-pattern} if for every $1 \leq i \leq d$, there exists a partition $\cQ_i$ of $\mathopen{[} 0,1 \mathclose{[}$ into half-open intervals such that $\cP= \cQ_1 \times \cQ_2 \times \ldots \times \cQ_d$.
\end{Def}

Obviously, every partition of $\interfo{0}{1}^d$ a rectangle into rectangles can be refined into a grid pattern.

\begin{Def}
Let $f \in \Rec_d$ and $\cP$ be a partition of $X$ into rectangles. We say that $\cP$ is a {\bf partition associated with $f$} if for every $K \in \cP$ the restriction of $f$ to $K$ is a translation. Then the set $f(\cP):=\lbrace f(K) \mid K \in \cP \rbrace$ is a new partition of $X$ into rectangles called the {\bf arrival partition} of $f$ with $\cP$. We denote by $\Pi_f$ the set of all partitions associated with $f$. If $\cP$ is a grid-pattern, it is said to be a {\bf grid-pattern associated with $f$}.
\end{Def}

\begin{Rem}
The fact that $\Rec_d$ is a group under composition is immediate. One can see that if $f,g \in \Rec_d$ and $\cP \in \Pi_f,\cQ \in \Pi_g$, then there exists a partition $\cR$ into $d$-rectangles that refines both $f(\cP)$ and $\cQ$. Thus $f^{-1}(\cR)$ is a partition into $d$-rectangles such that $g \circ f$ acts on every $d$-rectangle of $f^{-1}(\cR)$ by translation.
\end{Rem}

In the following, the ``$d$'' of $d$-rectangle may be omitted whenever there is no possible confusion.

\begin{Def}\label{Definition restricted shuffle and REC-transposition}
(See Figure \ref{fig_shuf} in the introduction.)

A {\bf restricted shuffle in direction $i$} is an element $\sigma_{R,s,i}$ of $\Rec_d$ where $R$ is a $(d-1)$-subrectangle of $e_i^{\bot}$ and $s$ is a restricted rotation, defined by:
\begin{enumerate}
\item if $\pr_i^\bot(x)\notin R$, $\sigma_{R,s,i}(x)=x$;
\item if $\pr_i^\bot(x)\in R$:
        \begin{enumerate}
        \item for $j\neq i$, $\sigma_{R,s,i}(x)_j=x_j$;
        \item $\sigma_{R,s,i}(x)_i=s(x_i)$.
        \end{enumerate}
\end{enumerate}

For disjoint translation-isometric rectangles $P,Q \subset [0,1[^d$, define the {\bf rectangle transposition} $\tau_{P,Q}$ as the element of $\Rec_d$ defined as the identity outside $P \cup Q$, and as a translation on each of $P,Q$, exchanging them. The set of all rectangles transpositions in $\Rec_d$ is denoted by $\Transpo_d$.

\end{Def}

\begin{Nota}
If $I$ and $J$ are the two intervals associated with $s$ then the $d$-rectangles $P_1$ and $P_2$, defined by $\pr_i(P_1)=I$, $\pr_i(P_2)=J$ and $\pr_i^{\bot}(P_1)=\pr_i^{\bot}(P_2)=R$, are two rectangles which partitioned the support of $f$ and where $f$ is continuous on both of them. We say that $f$ shuffles this two rectangles.
\end{Nota}

\section{Setwise freeness, Eliott's theorem and the tensor volume}\label{s_tensor_volume}

\subsection{Eliott's theorem}

At various places, we need a general fact on totally ordered abelian groups. A submonoid $S$ of an abelian group is said to be {\bf simplicial} if it is generated, as a submonoid, by a finite $\ZZ$-independent subset. It is said to be {\bf ultrasimplicial} if every finite subset of $S$ is contained in a simplicial submonoid of $S$. An ordered abelian group is said to be {\bf ultrasimplicially ordered} if its positive cone (the submonoid of elements $\ge 0$) is ultrasimplicial.

\begin{Thm}[Eliott \cite{Elliott79}]\label{Theorem ultrasimplicially ordered group}
Every totally ordered abelian group is ultrasimplicially ordered.
\end{Thm}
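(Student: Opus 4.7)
The plan is to argue by induction on the cardinality of a finite subset $F$ of the positive cone $G^+$, showing that $F$ is always contained in a simplicial submonoid of $G^+$. The case $|F|\leq 1$ is immediate. For the inductive step, write $F = F' \cup \{d\}$; by induction there is a $\ZZ$-independent positive family $b_1, \ldots, b_k$ whose generated submonoid contains $F'$, and the task is to enlarge or replace this family so as to also express $d$ nonnegatively.

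The central statement to prove is therefore an extension lemma: given $\ZZ$-independent $b_1, \ldots, b_k \in G^+$ and $d \in G^+$, produce $\ZZ$-independent $c_1, \ldots, c_r \in G^+$ such that each of $b_1, \ldots, b_k, d$ is a nonnegative integer combination of the $c_j$. If $d$ is $\ZZ$-independent from the $b_i$, the augmented family $\{b_1, \ldots, b_k, d\}$ itself works, so the substance of the lemma concerns the case where $d$ lies in the lattice $\langle b_1, \ldots, b_k\rangle_\ZZ$, so that one must genuinely change basis. This is precisely where the total order becomes essential.

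To handle this case, I would pass to the subgroup $H = \langle b_1, \ldots, b_k\rangle_\ZZ$ and invoke Hahn's embedding theorem to realize $H$ as an ordered subgroup of $\RR^k$ with the lexicographic order. The positive cone is then the disjoint union, for $j=1,\ldots,k$, of the ``layers'' $\{x_1 = \cdots = x_{j-1} = 0,\ x_j > 0\}$, and the problem becomes: find a $\ZZ$-basis of $H$ made of lex-positive vectors in which the finitely many prescribed vectors $b_i$ and $d$ all have nonnegative coordinates. I would proceed by induction on the rank: after projecting onto the top lex-layer --- an archimedean subgroup of $\RR$ --- one is reduced to a one-dimensional problem, which can be solved by a continued-fraction / Stern--Brocot-type refinement simultaneously expressing finitely many positive reals as nonnegative integer combinations of a common $\ZZ$-independent positive pair. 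One then lifts these basis vectors back to $H$ and recurses on the deeper lex-layers.

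The main obstacle I expect is this last recursive step: arranging the construction so that the resulting basis remains simultaneously $\ZZ$-independent, lex-positive, and compatible with the nonnegativity constraints across all layers. A natural implementation combines Euclidean-type exchanges $b_i \leftarrow b_i - q b_j$ (with $q$ maximal subject to positivity) with a complexity measure --- such as a weighted sum of absolute values of the coefficients of $d$ in the current basis --- that strictly decreases at each step and forces termination.
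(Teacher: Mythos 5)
The paper does not prove this theorem; it is quoted from Elliott's 1979 paper, so there is no in-paper proof to compare against, and your proposal must stand on its own. It does not: you have correctly outlined the skeleton (induction on the size of the finite set, reduction to an extension lemma, Hahn embedding to view the generated subgroup inside a lexicographic $\RR^m$, an archimedean base case of Vorobets type), but you explicitly flag the critical step as an obstacle, and it is a genuine gap, not a routine verification. Concretely, the exchange move ``$b_i \leftarrow b_i - q b_j$ with $q$ maximal subject to positivity'' is not even well-defined in a non-archimedean ordered group: if $b_j$ is infinitesimal relative to $b_i$ (say $b_i=(1,0)$ and $b_j=(0,1)$ in $\ZZ^2$ with lex order), then $b_i - qb_j$ remains positive for every $q\ge 0$ and no maximal $q$ exists. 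Thus the ``complexity measure on the coefficients of $d$'' has nothing it can be anchored to, and you give no argument that any such measure is bounded below or decreases. The lifting step has a similar defect: if $s\in\RR$ is a generator produced on the top layer and $\tilde s\in H$ is an arbitrary lift, then for an input $b_i$ whose lower-layer coordinates are very negative, $b_i - n\tilde s$ can drop below zero in $H_1 = \ker(\pi_1)$, so positivity of the remainder is not automatic and depends on choosing a ``sufficiently low'' lift $\tilde s$, which you do not construct.

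What is missing, in short, is a termination argument that is genuinely adapted to the non-archimedean setting, since the naive Euclidean/Stern--Brocot recursion only terminates in archimedean rank. A correct implementation of your outline must (i) restrict the exchange moves so that one only subtracts an element from another element in the same or a lower archimedean class (so that the maximum $q$ exists), (ii) choose lifts of the top-layer generators by a minimality condition so that the remainders stay nonnegative in the convex kernel, and (iii) set up the induction on the finite chain of convex subgroups of $H$ rather than on an ad hoc coefficient functional. As it stands, your proposal is a plausible plan, but the part you label ``the main obstacle'' is precisely the part that carries the theorem's content, and it is left open.
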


(For real numbers, this statement was rediscovered as Lemma 4.1 of Vorobets in \cite{Vorobets2011}, who was the first to use it in the context of interval exchanges.)

\subsection{Setwise $\QQ$-freeness}

We fix $d \geq 1$; in a first reading, one can assume $d=2$.

In fact we will need some rigidity on partitions associated with an element of $\Rec_d$. For this we want to have some objects to be $\QQ$-free.

\begin{Def}
Let $\cP$ be a partition into rectangles of $\interfo{0}{1}^d$. For every $1 \leq i \leq d$ we denote by $\cF_i$ the set $\lbrace \lambda(\pr_i(K)) \mid K \in \cP \rbrace$. If for every $1 \leq i \leq d$ the set $\cF_i$ is $\QQ$-linearly independent then we say that $\cP$ is a {\bf setwise $\QQ$-free} partition.
\end{Def} 

\begin{Warn} The required $\QQ$-independence  is that of the {\bf set} $ \lbrace \lambda(\pr_i(K)) \mid K\in \cP \rbrace$, and not the family $(\lambda(\pr_i(K)))_{K \in \cP}$. So the setwise freeness condition says, roughly speaking, that the only $\QQ$-linear dependence relations among the $\lambda(\pr_i(K))$, for $K \in \cP$ (for each fixed $K$) are equalities.
\end{Warn}

The previous warning, as well as the following proposition are illustrated in Figure \ref{fig_refi}.

\begin{Prop}\label{Proposition We can always refine a grid-pattern into a setwise Q-free grid-pattern}
Let $\cQ$ be a grid-pattern. There exists a setwise $\QQ$-free grid-pattern $\cQ'$ that refines $\cQ$.
\end{Prop}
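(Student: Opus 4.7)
My plan is to reduce the problem to a one-dimensional statement about partitions of $[0,1\mathclose[$ into half-open intervals and then invoke Eliott's theorem coordinate by coordinate.

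First, I would unpack the grid-pattern structure. By definition there exist partitions $\cQ_i$ of $[0,1\mathclose[$ into half-open intervals such that $\cQ = \cQ_1 \times \cdots \times \cQ_d$. For any rectangle $K = I_1 \times \cdots \times I_d \in \cQ$, the projection $\pr_i(K)$ is just $I_i$, so the set $\cF_i$ defined in the statement coincides with the set of lengths $\{\lambda(I) : I \in \cQ_i\}$. It therefore suffices to refine each $\cQ_i$ into a partition $\cQ_i'$ of $[0,1\mathclose[$ into half-open intervals such that the set of lengths occurring in $\cQ_i'$ is $\QQ$-linearly independent; the product $\cQ' := \cQ_1' \times \cdots \times \cQ_d'$ will then be a grid-pattern refining $\cQ$ and setwise $\QQ$-free.

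Second, I would fix one index $i$ and handle the one-dimensional refinement. Let $\ell_1,\dots,\ell_n$ be the (finite) collection of lengths of the intervals of $\cQ_i$, listed with multiplicity. These are positive real numbers lying in the positive cone of the totally ordered abelian group $\RR$. By Eliott's theorem (Theorem \ref{Theorem ultrasimplicially ordered group}), the positive cone of $\RR$ is ultrasimplicial, so there exists a $\QQ$-linearly independent finite family $t_1,\dots,t_m > 0$ such that each $\ell_j$ can be written as a non-negative integer combination $\ell_j = \sum_{k} a_{jk} t_k$ with $a_{jk}\in\NN$. Now chop each interval of $\cQ_i$ of length $\ell_j$, from left to right, into consecutive half-open subintervals of lengths $t_1,\dots,t_1,t_2,\dots$ (with multiplicities $a_{j1},a_{j2},\dots$). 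This yields a partition $\cQ_i'$ of $[0,1\mathclose[$ refining $\cQ_i$, all of whose interval lengths belong to $\{t_1,\dots,t_m\}$; the set of lengths of $\cQ_i'$ is thus contained in a $\QQ$-linearly independent set, hence is itself $\QQ$-linearly independent.

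Finally, doing this independently for each of the $d$ coordinate directions produces the partitions $\cQ_1',\dots,\cQ_d'$, and their product $\cQ'$ is the desired setwise $\QQ$-free grid-pattern refining $\cQ$. There is no serious obstacle in this argument once Eliott's theorem is available; the only point to verify is that the constructed $\cQ'$ is a grid-pattern, which is immediate from its product form, and that it refines $\cQ$, which follows because each $\cQ_i'$ refines $\cQ_i$.
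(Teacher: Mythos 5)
Your proof is correct and follows essentially the same route as the paper: reduce to the one-dimensional partitions $\cQ_i$, invoke Eliott's theorem to express all interval lengths as nonnegative integer combinations of a $\QQ$-free set, and refine each $\cQ_i$ accordingly. You spell out the chopping step in more detail than the paper, but the substance is identical.
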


\begin{proof}%[Proof of Proposition \ref{Proposition We can always refine a grid-pattern into a setwise Q-free grid-pattern}]
Write $\cQ=\cQ_1 \times \ldots \times \cQ_d$ where $\cQ_i$ is a partition into intervals of $\interfo{0}{1}$ and let $\cF_i:=\lbrace \lambda(I) \mid I \in \cQ_i \rbrace$. By Theorem \ref{Theorem ultrasimplicially ordered group}, there exists a $\QQ$-free subset $\cF_i'$ of positive reals such that every element of $\cF_i$ belongs to the additive subsemigroup generated by $\cF'_i$. Hence we can refine each $\cQ_i$ as a partition $\cQ'_i$. Then $\cQ':= \cQ_1' \times \ldots \times \cQ_d'$ is a setwise $\QQ$-free grid-pattern which refines $\cQ$.
\end{proof}

\begin{figure}[h!]
\begin{center}
\includegraphics[width=\textwidth]{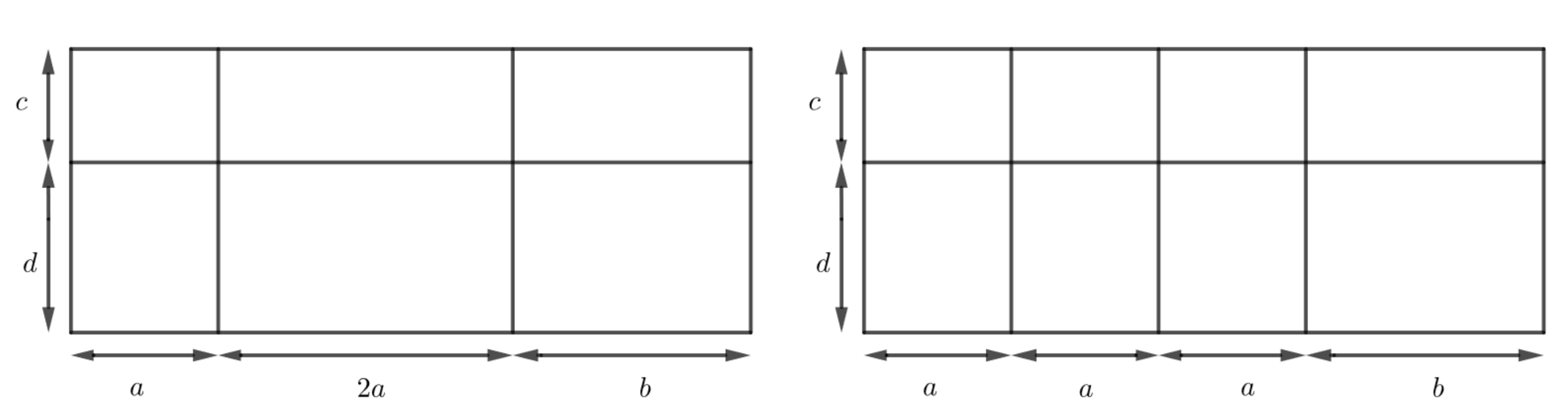}
\caption{\textbf{Left:} A grid-pattern that is not setwise $\QQ$-free. \textbf{Right:} A setwise $\QQ$-free grid-pattern which refines the left-hand grid-pattern. (We assume that $\lbrace a,b \rbrace$ and $\lbrace c,d \rbrace$ and setwise $\QQ$-free subsets of $\RR$.)}\label{fig_refi}
\end{center}
\end{figure}

\subsection{Tensor volume}

\begin{Prop}[Tensor volume]\label{tensor_measure}
There is a unique map $\vl_d$ from the set of multirectangles in $\RR^d$ to $\RR^{\otimes d}$ that is additive under disjoint unions, and maps each rectangle $\prod_{i=1}^d [a_i,a_i+t_i\mathclose[$ to $t_1\otimes\dots\otimes t_d$.
\end{Prop}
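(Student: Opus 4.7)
The plan is to prove uniqueness first, then construct $\vl_d$ by summing over an arbitrary partition into rectangles and check independence of the choice of partition. Uniqueness is immediate: any multirectangle $M$ admits at least one partition $\cP$ into rectangles (by definition), and additivity under disjoint unions forces
\[\vl_d(M)=\sum_{R\in\cP}\vl_d(R),\]
which is then determined by the prescribed value on individual rectangles.

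For existence, given $M$ and a partition $\cP$ of $M$ into rectangles, set $v(\cP):=\sum_{R\in\cP}\vl_d(R)$; the entire substance is to show $v(\cP)$ does not depend on $\cP$. The key elementary move is a single axis-parallel slice: if $\cP'$ is obtained from $\cP$ by replacing one rectangle $R=\prod_{j=1}^d[a_j,a_j+t_j\mathclose[$ with the two pieces cut by the hyperplane $x_i=a_i+s$ (with $0<s<t_i$), then $\QQ$-bilinearity of the tensor product gives
\[t_1\otimes\cdots\otimes s\otimes\cdots\otimes t_d\;+\;t_1\otimes\cdots\otimes(t_i-s)\otimes\cdots\otimes t_d\;=\;t_1\otimes\cdots\otimes t_i\otimes\cdots\otimes t_d,\]
so $v(\cP')=v(\cP)$. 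By induction this equality persists under any finite sequence of such elementary slicings.

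To compare two arbitrary partitions $\cP_1,\cP_2$ of $M$ into rectangles, let $H$ be the finite set of axis-parallel affine hyperplanes containing at least one facet of some rectangle in $\cP_1\cup\cP_2$. Slicing every rectangle of $\cP_k$ by each hyperplane of $H$ that meets its interior produces, in both cases $k=1,2$, the same common refinement $\cP^\ast$ (namely, the grid pattern cut out by $H$, intersected with $M$), reached from $\cP_k$ through finitely many elementary slicings. Hence $v(\cP_1)=v(\cP^\ast)=v(\cP_2)$, so setting $\vl_d(M):=v(\cP)$ is well-defined. Additivity under disjoint unions then follows by concatenating partitions, and the value on rectangles is built into the definition. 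The only real content is the multilinearity identity above; the common-refinement argument is standard for axis-parallel dissections and presents no serious obstacle.
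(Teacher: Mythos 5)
Your proof is correct and follows essentially the same route as the paper: establish the well-definedness by passing to a common grid refinement, with the key computation being multilinearity of the tensor product. The only cosmetic difference is that you decompose multilinearity into a chain of single axis-parallel slicings, whereas the paper invokes multilinearity all at once on a regular grid partition of a rectangle; these are equivalent and the common-refinement step is identical.
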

\begin{proof}
The uniqueness is clear since every multirectangle is a disjoint union of rectangles.

For the existence, we first check additivity when a rectangle is decomposed onto rectangles according to partitions in each direction (call this a regular partition). This is straightforward from multilinearity. Next, we need to show that if a multirectangle is a finite disjoint union of rectangles in two ways then the resulting computation of its tensor measure yields the same result. Indeed, there exists a common refinement of these two partitions that is a regular partition of each of the rectangles in both partition. Hence, the equality follows from the above particular case of additivity.
\end{proof}

Proposition \ref{vol_orb} follows from the following:

\begin{Lem}\label{vl_rec}
Let $R,R'$ be multirectangles in $\RR^d$. Then there exists a $\Rec$-isomorphism $R\to R'$ if and only if $\vl_d(R)=\vl_d(R')$.
\end{Lem}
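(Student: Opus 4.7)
The plan is to treat the two directions separately. The forward implication is essentially formal: if $f\colon R\to R'$ is a $\Rec$-isomorphism then $R$ admits a partition into rectangles on each of which $f$ is a translation; since translating a rectangle preserves its side lengths (and hence its tensor volume), additivity of $\vl_d$ on disjoint unions (Proposition \ref{tensor_measure}) gives $\vl_d(R)=\vl_d(R')$.

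For the converse, first partition $R$ and $R'$ into rectangles arbitrarily, and let $\cF_i\subset\RR_{>0}$ be the (finite) set of side lengths in direction $i$ appearing in either partition, for $1\le i\le d$. The key step is to apply Elliott's theorem \ref{Theorem ultrasimplicially ordered group} coordinate by coordinate: there exists a finite $\QQ$-free subset $\cF'_i\subset\RR_{>0}$ such that every element of $\cF_i$ is a non-negative integer combination of elements of $\cF'_i$. Refining each original rectangle according to these expressions (exactly as in the proof of Proposition \ref{Proposition We can always refine a grid-pattern into a setwise Q-free grid-pattern}, but applied individually to each rectangle of the partitions of $R$ and $R'$), we obtain partitions
\[
R=\bigsqcup_{j\in J}K_j,\qquad R'=\bigsqcup_{k\in J'}K'_k,
\]
in which every side length in direction $i$ belongs to the same $\QQ$-free set $\cF'_i$.

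Write $\cF'_i=\{t^{(i)}_1,\dots,t^{(i)}_{n_i}\}$. Since each $\cF'_i$ is $\QQ$-free, the family of pure tensors $t^{(1)}_{a_1}\otimes\cdots\otimes t^{(d)}_{a_d}$, indexed by $(a_1,\dots,a_d)\in\prod_i\{1,\dots,n_i\}$, is $\QQ$-linearly independent in $\RR^{\otimes d}$ (it is a basis of $\bigotimes_i \Vect_\QQ(\cF'_i)$). The expressions
\[
\vl_d(R)=\sum_{j\in J} t^{(1)}_{a_1(j)}\otimes\cdots\otimes t^{(d)}_{a_d(j)},\qquad \vl_d(R')=\sum_{k\in J'} t^{(1)}_{a_1(k)}\otimes\cdots\otimes t^{(d)}_{a_d(k)}
\]
together with the hypothesis $\vl_d(R)=\vl_d(R')$ then force, for every tuple $(a_1,\dots,a_d)$, the number of $K_j$ with side-length profile $(t^{(1)}_{a_1},\dots,t^{(d)}_{a_d})$ to equal the corresponding count among the $K'_k$.

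This yields a bijection $\pi\colon J\to J'$ such that $K_j$ and $K'_{\pi(j)}$ are translates of one another for every $j$. Defining $f\colon R\to R'$ to be, on each $K_j$, the unique translation carrying $K_j$ onto $K'_{\pi(j)}$, produces a well-defined $\Rec$-isomorphism. The main subtlety to watch is purely bookkeeping: one must apply Elliott simultaneously to the side lengths of both $R$ and $R'$ (not separately), so that both partitions are expressed using the same $\QQ$-free set $\cF'_i$ in each direction, which is what allows the linear-independence argument to match the two tensor sums term by term.
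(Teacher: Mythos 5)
Your proof is correct and follows essentially the same strategy as the paper: the forward direction via additivity and translation-invariance of $\vl_d$, and the converse by (i) applying Elliott's theorem simultaneously to the side lengths of both partitions, (ii) refining so that all $i$-side lengths lie in a common $\QQ$-free set, and (iii) using $\QQ$-linear independence of the resulting pure tensors to match the rectangle counts by shape. The paper phrases step (iii) slightly more compactly (counting functions $n(s)$, $n'(s)$ on $S_1\times\dots\times S_d$), but the content is identical, and your explicit remark about the bookkeeping subtlety — that Elliott must be applied to the union of side lengths from both $R$ and $R'$ — correctly identifies the one place where carelessness would break the argument.
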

\begin{proof}
Since $\vl_d$ is preserved by translations and is additive under disjoint unions, it is preserved by REC-isomorphisms, so the condition is necessary. Conversely, suppose $\vl_d(R)=\vl_d(R')$. Choose finite partitions of $R$ and $R'$ into rectangles, called constituting rectangles. Let $H_i$ be the subsemigroup of $\RR$ generated by $i$-sizes of constituting rectangles. By Eliott's theorem (Theorem \ref{Theorem ultrasimplicially ordered group}), $H_i$ is contained in the subsemigroup generated by some $\QQ$-free subset $S_i$ of $\RR_{>0}$. Hence, refining the partitions, we can suppose that all $i$-sizes of constituting rectangles are in $S_i$. For $s=(s_1,\dots,s_d)\in S_1\times\dots\times S_d$, let $n(s)$ (resp.\ $n'(s)$) be the number of rectangles in $R$ (resp.\ $R'$) of size $(s_1,\dots s_d)$. 
Also write $\bar{s}=s_1\otimes \dots\otimes s_d$. Then $\sum_sn(s)\bar{s}=\vl_d(R)=\vl_d(R')=\sum_sn'(s)\bar{s}$. Since the $\bar{s}$ form a $\QQ$-free family when $s$ ranges over $S_1\times\dots\times S_d$, we deduce that $n(s)=n'(s)$ for all $s$. Hence there is a shape-preserving bijection between the set of constituting rectangles of $R$ and $R'$. Such a bijection induces a REC-isomorphism $R\to R'$.
\end{proof}

%%%%%%

%%%%%%

%%%%%%

%%%%%%

The last part of the argument also provides the following statement about partitions of a given multirectangle, which will be used in the sequel.

\begin{Lem}\label{Lemma two partitions with Q-conditions contains the same number of pieces}
Let $R$ be a multirectangle in $\RR^d$. For every $1 \leq i \leq d$, let $F_i$ be a setwise $\QQ$-free subset of $\RR^+$. Let $\cP$ and $\cP'$ be two partitions into $d$-rectangles of $R$ such that for every $K \in \cP \cup \cP'$ we have $\lambda(\pr_i(P)) \in F_i$. Then, there exists a bijection $\delta$ between $\cP$ and $\cP'$ such that for every $K \in \cP$, the rectangles $K$ and $\delta(K)$ are translation-isometric. If $K \in \cP \cap \cP'$ we can also ask $\delta(K)=K$.
\end{Lem}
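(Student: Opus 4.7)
The plan is to adapt the counting argument at the end of the proof of Lemma~\ref{vl_rec}. The key auxiliary fact is tensor-algebraic: if $F_1,\dots,F_d \subset \RR$ are $\QQ$-free, then the pure tensors $s_1\otimes\dots\otimes s_d$, indexed by $(s_1,\dots,s_d)\in F_1\times\dots\times F_d$, form a $\QQ$-free family in $\RR^{\otimes d}$. This is immediate upon completing each $F_i$ to a $\QQ$-basis of $\RR$: the corresponding pure tensors then form part of the standard basis of the $d$-fold tensor power over $\QQ$.

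For $s=(s_1,\dots,s_d)\in F_1\times\dots\times F_d$, set $\bar{s}=s_1\otimes\dots\otimes s_d$ and let $n(s)$ (resp.\ $n'(s)$) denote the number of rectangles $K\in\cP$ (resp.\ $K\in\cP'$) with $\lambda(\pr_i(K))=s_i$ for every $i$. By hypothesis every rectangle of $\cP\cup\cP'$ admits such a signature, so Proposition~\ref{tensor_measure} yields
\[
\sum_{s}n(s)\bar{s} \;=\; \vl_d(R) \;=\; \sum_{s}n'(s)\bar{s}.
\]
The auxiliary fact forces $n(s)=n'(s)$ for every $s$. Then, for each $s$, any bijection between the $n(s)$ rectangles of shape $s$ in $\cP$ and those in $\cP'$ produces a shape-preserving map $\delta:\cP\to\cP'$; since two rectangles of equal side-lengths are translates of each other, this gives the desired translation-isometric bijection.

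For the refinement concerning fixed common pieces, set $\cP_0=\cP\cap\cP'$ and apply the above argument to $\cP\setminus\cP_0$ and $\cP'\setminus\cP_0$: these are partitions into $d$-rectangles, still satisfying the $F_i$-constraints, of the same multirectangle $R\setminus\bigcup_{K\in\cP_0}K$. The counting therefore gives a shape-preserving bijection between them, which one extends by $K\mapsto K$ on $\cP_0$ to obtain $\delta$ with the required property. I do not foresee any real obstacle: the genuine substance is the $\QQ$-freeness of the pure tensors above, everything else being the same bookkeeping as in Lemma~\ref{vl_rec}.
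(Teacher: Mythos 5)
Your proof is correct and follows essentially the same route as the paper: both reduce to counting rectangles by side-length signature, invoke the $\QQ$-freeness of the pure tensors $s_1\otimes\cdots\otimes s_d$ to conclude $n(s)=n'(s)$ from the equality of tensor volumes, and handle the fixed-common-pieces requirement by first removing $\cP\cap\cP'$ and restricting attention to the complementary multirectangle. You spell out the tensor-algebraic $\QQ$-freeness fact slightly more explicitly, which is a minor expository gain, but the substance and structure are the same as in the paper.
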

\begin{proof}
Let $R'$ be the union of $\cP\cap\cP'$. Replacing $R$ with $R\smallsetminus R'$, we can suppose that $\cP\cap\cP'$ is empty (and act as identity on common rectangles), and hence ignore the last requirement.

The sequel is similar to the proof of Lemma \ref{vl_rec}.
For $s=(s_1,\dots,s_d)\in S_1\times\dots\times S_d$, let $n(s)$ (resp.\ $n'(s)$) be the number of rectangles in $\cP$ (resp.\ $\cP'$) of size $(s_1,\dots s_d)$, and write $\bar{s}=s_1\otimes\dots\otimes s_d$. Since the $\bar{s}$ form a $\QQ$-free subset, we deduce that $n(s)=n'(s)$ for every $s$. Hence there is a bijection as required.
\end{proof}

\section{Generation by restricted shuffles: first observations}\label{gen_easy}

We establish some easy particular cases of Theorem \ref{Theorem the set of all restricted shuffles is a generating subset of REC}, which asserts that $\Rec_d$ is generated by restricted shuffles.

We start with the well-known case $d=1$:

\begin{Prop}\label{IETrr}
The group $\IET$ is generated by restricted rotations.
\end{Prop}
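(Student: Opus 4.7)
The plan is to encode $f \in \IET$ combinatorially and reduce to the well-known fact that the symmetric group $\mathfrak{S}_n$ is generated by the adjacent transpositions $(j,j+1)$ for $1 \le j < n$.

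First, I would fix an associated partition $\cP = (I_1,\dots,I_n)$ of $\interfo{0}{1}$ for $f$, listing the $I_k$ in natural left-to-right order. Since $f$ acts as a translation on each $I_k$, the image $f(\cP)$ is also an ordered partition of $\interfo{0}{1}$, and there is a unique $\sigma \in \mathfrak{S}_n$ such that $f(I_{\sigma^{-1}(1)}), \dots, f(I_{\sigma^{-1}(n)})$ are the arrival intervals in natural left-to-right order.

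Next, I would argue by induction on the number of inversions of $\sigma$. If $\sigma = \Id$, then $f(I_k) = I_k$ for every $k$; since $f|_{I_k}$ is a translation mapping $I_k$ bijectively onto itself, it must be the identity, so $f = \Id$, which is the empty product of restricted rotations. Otherwise, pick an index $j$ with $\sigma^{-1}(j) > \sigma^{-1}(j+1)$, set $\interfo{a}{b} = f(I_{\sigma^{-1}(j)})$ and $\interfo{b}{c} = f(I_{\sigma^{-1}(j+1)})$ (these are consecutive in the arrival partition), and let $\rho$ be the restricted rotation on $\interfo{a}{c}$ that swaps $\interfo{a}{b}$ and $\interfo{b}{c}$ by translation.

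Then I would check that $\cP$ is still an associated partition of $\rho \circ f$ (because $\rho$ acts as a translation on each block of $f(\cP)$), and that the permutation attached to $\rho \circ f$ is $(j,j+1)\sigma$, which has one fewer inversion than $\sigma$ (the pair $(\sigma^{-1}(j+1),\sigma^{-1}(j))$ ceases to be inverted, and no other pair is affected). By induction, $\rho \circ f$ is a product of restricted rotations, and since the inverse of a restricted rotation is again a restricted rotation, $f = \rho^{-1}(\rho \circ f)$ is one too.

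There is no genuine obstacle here: the whole argument is bookkeeping translating a bubble-sort decomposition of $\sigma$ into a sequence of left-compositions of $f$ by restricted rotations. As the authors already remark immediately after Theorem \ref{rec_shu_gen}, this easy argument has no direct higher-dimensional analogue because the combinatorics of a rectangle exchange is not in general encoded by a single permutation, which is what makes the main Theorem \ref{rec_shu_gen} substantially harder.
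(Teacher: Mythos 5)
Your proof is correct and is essentially the paper's argument, just spelled out in more detail: the paper appeals in one line to generation of $\mathfrak{S}_n$ by adjacent transpositions and the identification of these with restricted rotations, while you carry out the bubble-sort induction on inversions explicitly. No substantive difference in approach.
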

\begin{proof}
The symmetric group $\mathfrak{S}_n$ is generated by the transpositions $(i\;i+1)$, $1\le i<n$.
Each IET can be viewed as a permutation of intervals, and therefore this group is generated by those ones consisting in transposing two consecutive intervals. These are precisely restricted rotations.
\end{proof}

A direct consequence of the definition of a restricted shuffle, Definition \ref{Definition restricted shuffle and REC-transposition}, and Proposition \ref{IETrr} is the following proposition, which is a first easy particular case of Theorem \ref{Theorem the set of all restricted shuffles is a generating subset of REC}, and a step in its proof.

\begin{Prop}\label{Proposition every element of IET^d is a product of restricted shuffles}
Every element of $\IET^d$ is a finite product of restricted shuffles. \qed
\end{Prop}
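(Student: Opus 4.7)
The plan is to reduce the statement to the one-dimensional case (Proposition \ref{IETrr}) by exploiting the coordinate-wise structure of $\IET^d$.

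First, given an element $F=(f_1,\dots,f_d)\in\IET^d$ acting on $[0,1\mathclose[^d$ by $F(x_1,\dots,x_d)=(f_1(x_1),\dots,f_d(x_d))$, I would write $F=F_1F_2\cdots F_d$, where $F_i$ is the element of $\Rec_d$ that acts as $f_i$ on the $i$-th coordinate and as the identity on the other coordinates. This factorization is immediate (in fact the factors commute), so everything reduces to showing that each $F_i$ is a product of restricted shuffles.

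Next, by Proposition \ref{IETrr} each $f_i\in\IET$ can be written as a finite product $f_i=s_{i,1}s_{i,2}\cdots s_{i,n_i}$ of restricted rotations of $[0,1\mathclose[$. Looking back at Definition \ref{Definition restricted shuffle and REC-transposition}, the ``full-slab'' restricted shuffle $\sigma_{R,s,i}$ with $R=[0,1\mathclose[^{d-1}\subset e_i^\bot$ and $s$ a restricted rotation acts precisely by applying $s$ to the $i$-th coordinate and leaving the others fixed. Hence $F_i=\sigma_{[0,1\mathclose[^{d-1},\,s_{i,1},\,i}\,\cdots\,\sigma_{[0,1\mathclose[^{d-1},\,s_{i,n_i},\,i}$ is a product of restricted shuffles in direction $i$. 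Combining with the first step yields the claim.

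There is essentially no obstacle: the only thing to notice is that the definition of restricted shuffle explicitly allows $R$ to be any $(d-1)$-subrectangle of $e_i^\bot$, in particular the whole slab $[0,1\mathclose[^{d-1}$, so that a one-dimensional restricted rotation in direction $i$ is literally a restricted shuffle. This is why the statement is a ``direct consequence'' of the definition together with Proposition \ref{IETrr}.
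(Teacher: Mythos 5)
Your proof is correct and spells out exactly the argument the paper has in mind: factor $F=(f_1,\dots,f_d)$ into its coordinate-wise pieces, write each $f_i$ as a product of restricted rotations by Proposition~\ref{IETrr}, and observe that a restricted rotation in direction $i$ is the restricted shuffle $\sigma_{R,s,i}$ with $R=[0,1\mathclose[^{d-1}$. The paper states this as a ``direct consequence'' and gives no further proof, so your write-up is the intended one.
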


Here is a second elementary particular case of Theorem \ref{Theorem the set of all restricted shuffles is a generating subset of REC}, which will also be needed.

\begin{Prop}\label{Proposition any transposition is a product of restricted shuffles}
For all disjoint translation-isometric $P,Q$ rectangles, the rectangle transposition $\tau_{P,Q}$ is a product of restricted shuffles.
\end{Prop}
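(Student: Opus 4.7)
The plan is to induct on $k := |I|$ where $v := Q - P$ and $I := \{i : v_i \neq 0\} \subseteq \{1, \ldots, d\}$, noting $k \geq 1$ since $P \neq Q$. I would first perform a subdivision reduction, then handle the base case $k=1$ via Proposition \ref{IETrr}, and carry out the inductive step by a conjugation trick that moves through an intermediate translate $P''$ of $P$.

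\textit{Preliminary reduction.} For each $i \in I$ with $\lambda(\pr_i(P)) > |v_i|$, I would partition $\pr_i(P)$ into subintervals of length at most $|v_i|$; applying the same cut to $\pr_i(Q) = \pr_i(P) + v_i$ and taking products across coordinates yields partitions $P = \bigsqcup_\ell P_\ell$ and $Q = \bigsqcup_\ell Q_\ell$ with $Q_\ell = P_\ell + v$ for each $\ell$. The transpositions $\tau_{P_\ell, Q_\ell}$ have pairwise disjoint supports, hence commute, and multiply to $\tau_{P,Q}$. So it suffices to treat the case $\lambda(\pr_i(P)) \leq |v_i|$ for every $i \in I$.

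\textit{Base case $k=1$.} Here $v = v_i e_i$ for a single $i$; set $R := \pr_i^\perp(P) = \pr_i^\perp(Q)$. Then $\tau_{P,Q}$ is the identity outside the slab $\{x : \pr_i^\perp(x) \in R\}$, and inside the slab acts only on the $i$-coordinate, as the $1$-dimensional interval exchange of $\interfo{0}{1}$ that transposes $\pr_i(P)$ and $\pr_i(Q)$. By Proposition \ref{IETrr}, this $1$-dimensional element factors as a product $s_1 \cdots s_n$ of restricted rotations, and hence $\tau_{P,Q} = \sigma_{R, s_1, i} \cdots \sigma_{R, s_n, i}$, a product of restricted shuffles in direction $i$.

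\textit{Inductive step $k \geq 2$.} I would pick any $i \in I$ and set $P'' := P + v_i e_i$. The reduction step ensures $P'' \subset \interfo{0}{1}^d$ (its projections agree with those of $P$ or $Q$) and that $P''$ is disjoint from $P$ (since $|v_i| \geq \lambda(\pr_i(P))$) and from $Q$ (since $Q - P'' = \sum_{j \neq i} v_j e_j$, and any $j \in I \setminus \{i\}$ satisfies $|v_j| \geq \lambda(\pr_j(P))$). A direct trace on each of $P, P'', Q$ verifies the conjugation identity
\[
\tau_{P,Q} \;=\; \tau_{P,P''} \, \tau_{P'',Q} \, \tau_{P,P''},
\]
and the two distinct factors on the right have translation vectors with $1$ and $k-1$ nonzero components respectively, and inherit the size bounds. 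The inductive hypothesis then applies to each factor, completing the argument. The main technical point is the reduction step: it manufactures enough room to place $P''$ disjoint from both $P$ and $Q$, which would otherwise fail whenever some $|v_i|$ is smaller than $\lambda(\pr_i(P))$.
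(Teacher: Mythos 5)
Your proof is correct, and it shares the paper's core idea of changing one coordinate of the translation vector at a time and reducing each such step to the $1$-dimensional result (Proposition \ref{IETrr}); however, you add a preliminary subdivision step that the paper omits, and this is a genuine and valuable difference. The paper introduces intermediate rectangles $R_0=P,\ R_1,\dots,R_d=Q$ with $R_i=Q_1\times\cdots\times Q_i\times P_{i+1}\times\cdots\times P_d$ and expresses $\tau_{P,Q}$ as a nested conjugation of the single-direction transpositions $t_i=\tau_{R_{i-1},R_i}$. For those $t_i$ to be defined, and for the conjugation chain to send $Q$ to itself, one needs consecutive $R_{i-1},R_i$ to be disjoint, which amounts to $P_i\cap Q_i=\emptyset$ for every $i$. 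That fails in general: take $d=2$, $P=[0,\tfrac12\mathclose[^2$, $Q=[\tfrac14,\tfrac34\mathclose[\times[\tfrac12,1\mathclose[$, where $R_0\cap R_1\neq\emptyset$. Your reduction --- cutting $P$ (and in parallel $Q=P+v$) until $\lambda(\pr_i(P))\le|v_i|$ for all $i\in I$ --- is exactly what guarantees that the intermediate $P''=P+v_ie_i$ is disjoint from both $P$ and $Q$, so that the conjugation identity $\tau_{P,Q}=\tau_{P,P''}\,\tau_{P'',Q}\,\tau_{P,P''}$ holds and the induction closes. Framing the recursion as an induction on $k=|I|$ (rather than iterating blindly over all $d$ coordinates) also sidesteps the degenerate directions where $v_i=0$. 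So: same underlying strategy, but your write-up supplies the disjointness that the paper's argument implicitly requires, and is the more careful of the two.
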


\begin{proof}
We first prove this in the special case when there exists $1 \leq i \leq d$ such that $\pr_i (P) \cap \pr_i (Q) = \emptyset$ and $\pr_i^{\bot}(P)=\pr_i^{\bot}(Q)$. In this case we obtain it is a product of two restricted shuffles. Indeed, this is a consequence of the fact that this lemma is true when $d=1$. Let $a,b,a',b' \in \interfo{0}{1}$ such that $\pr_i (P)= \interfo{a}{b}$ and $\pr_i (Q)=\interfo{a'}{b'}$. Up to change the role of $P$ and $Q$ we can assume that $b<a'$. Let $R$ and $S$ be the two rectangles such that $\pr_i^{\bot} (R)=\pr_i^{\bot}(S)= \pr_i^{\bot} (P)$ and $ \pr_i(R)=\interfo{b}{b'}$ and $\pr_i(S)=\interfo{b}{a'}$. Let $r_1$ be the restricted shuffle in direction $i$ that shuffles $P$ with $R$ (this one send $P$ on $Q$) and $r_2$ be the restricted shuffle in direction $i$ that permutes $P$ with $S$. Then the composition $r_2^{-1} r_1$ is equals to the rectangle transposition that permutes $P$ with $Q$.

Now let us prove the general case. Let $P$ and $Q$ be two rectangles which are translation-isometric such that $P \cap Q=\emptyset$. Let $P_i:=\pr_i(P)$ and $Q_i:=\pr_i(Q)$ for every $1 \leq i \leq d$. Thus $P=P_1 \times P_2 \times \ldots \times P_d$ and $Q=Q_1 \times Q_2 \times \ldots \times Q_d$. For every $1 \leq i \leq d-1$ let $R_i$ be the rectangle $Q_1 \times \ldots \times Q_{i} \times P_{i+1} \times \ldots \times P_d$. We put $R_0=P$ and $R_d=Q$. Let $t_i$ be the rectangle transposition that permutes $R_{i-1}$ with $R_i$ for every $1 \leq i \leq d$. Then $\tau_{P,Q}=t_1\ldots t_{d-1}t_dt_{d-1}\ldots t_1$ and by the special case above, we know that $t_i$ is a product of two restricted shuffles in direction $i$. Then $s$ is a finite product of restricted shuffles.
\end{proof}

We now consider another special case: that of an element of $\Rec_d$ mapping grid to grid by translating pieces. Beware (see Remark \ref{Remark example of an element which does not send every grid into another grid}) that not every element of $\Rec_d$ has this form.

\begin{Prop}\label{Proposition Special case where we map a grid to another grid}
Every element $f \in \Rec_d$ such that there exists a setwise $\QQ$-free grid-pattern $\cQ$ such that $f(\cQ)$ is a grid-pattern can be written as a finite product of restricted shuffles.
\end{Prop}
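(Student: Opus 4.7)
The plan is to factor $f$ as $g\circ h$, where $g\in\IET^d$ acts coordinate-wise and $h$ is a finite product of rectangle transpositions; the proposition will then follow immediately from Propositions \ref{Proposition every element of IET^d is a product of restricted shuffles} and \ref{Proposition any transposition is a product of restricted shuffles}.

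Write $\cQ=\cQ_1\times\dots\times\cQ_d$, $\cQ'=f(\cQ)=\cQ'_1\times\dots\times\cQ'_d$, and set $\cF_i=\{\lambda(I):I\in\cQ_i\}$, which is $\QQ$-free by hypothesis. First I would check that, for each $i$, the partitions $\cQ_i$ and $\cQ'_i$ of $\interfo{0}{1}$ carry the same multiset of interval lengths. Since each cell of $\cQ'$ is a translate of a cell of $\cQ$, every $I'\in\cQ'_i$ has length in $\cF_i$. But $1=\sum_{I\in\cQ_i}\lambda(I)=\sum_{I'\in\cQ'_i}\lambda(I')$, and regrouping both sides by value in $\cF_i$ yields an equality $\sum_{s\in\cF_i}(m_s-m'_s)\,s=0$ with $m_s,m'_s\in\NN$; the $\QQ$-freeness of $\cF_i$ forces $m_s=m'_s$ for all $s$. (This is essentially the one-dimensional instance of the counting argument behind Lemma \ref{Lemma two partitions with Q-conditions contains the same number of pieces}.)

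Next, choose for each $i$ an arbitrary length-preserving bijection $\cQ_i\to\cQ'_i$ and let $g_i\in\IET$ be the corresponding interval exchange; set $g:=g_1\times\dots\times g_d\in\IET^d$, acting coordinate-wise on $\interfo{0}{1}^d$. Then $g$ is a translation on each cell of $\cQ$ and $g(\cQ)=\cQ'$, so $h:=g^{-1}\circ f$ is also a translation on each cell of $\cQ$ and maps $\cQ$ bijectively to itself. Each cell is therefore sent to a translation-isometric cell of $\cQ$, so $h$ is a (finite) permutation of the cells of $\cQ$ that preserves shapes; decomposing this permutation into transpositions of pairs of shape-equivalent cells exhibits $h$ as a finite product of rectangle transpositions, and $f=g\circ h$ is the desired factorization.

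The main (and only mildly technical) obstacle is the length-matching step of the second paragraph, where the setwise $\QQ$-freeness hypothesis is genuinely used; everything else is bookkeeping. Note that repeated lengths inside $\cF_i$ cause no trouble, as one simply picks an arbitrary length-preserving bijection between $\cQ_i$ and $\cQ'_i$; and the remark \ref{Remark example of an element which does not send every grid into another grid} alluded to in the statement makes clear that the hypothesis ``$f(\cQ)$ is a grid-pattern'' is a real restriction, so this proposition genuinely falls short of Theorem \ref{rec_shu_gen}.
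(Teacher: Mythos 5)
Your proof is correct and follows essentially the same route as the paper's: match the multiset of lengths in each direction using setwise $\QQ$-freeness, produce a coordinate-wise element of $\IET^d$ identifying the two grid-patterns, and observe that the remaining factor is a shape-preserving permutation of the cells, hence a product of rectangle transpositions; then conclude via Propositions \ref{Proposition every element of IET^d is a product of restricted shuffles} and \ref{Proposition any transposition is a product of restricted shuffles}. The only cosmetic difference is that you build $g$ with $g(\cQ)=\cQ'$ and factor $f=g\circ(g^{-1}f)$, while the paper builds $g$ with $g(f(\cQ))=\cQ$ and considers $g\circ f$; these are the same argument read in opposite order.
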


\begin{proof}
Let $\cQ=\cQ_1 \times \ldots \times \cQ_d$ and $f(\cQ)=\cQ'_1\times \ldots \times \cQ'_d$, where $\cQ_i$ and $\cQ'_i$ is a partition into intervals of $\interfo{0}{1}$. Thanks to the setwise $\QQ$-freeness of $\cQ$ we know that $f(\cQ)$ is setwise $\QQ$-free, also for every $1 \leq i \leq d$ and every $a \in \interfo{0}{1}$ we have:
\[
\Card(\lbrace I \in \cQ_i \mid \lambda(I)=a \rbrace)=\Card(\lbrace I \in \cQ'_i \mid \lambda(I)=a \rbrace)
\]
Hence there exists an element $g$ of $\IET^d$ such that $g(f(\cQ))=\cQ$. By Proposition \ref{Proposition every element of IET^d is a product of restricted shuffles} we know that $g$ is a finite product of restricted shuffles. Also as $g \circ f$ send $\cQ$ on itself we deduce that $g \circ f$ is a permutation on every maximal subset of translation-isometric rectangles of $\cQ$. Hence it is a product of rectangle transpositions and by Proposition \ref{Proposition any transposition is a product of restricted shuffles} we deduce that $f$ is a finite product of restricted shuffles.
\end{proof}

\begin{Rem}\label{Remark example of an element which does not send every grid into another grid}
For an element of $\Rec_d$ there does not always exist an associated grid-pattern that is sent to another grid-pattern. For example this does not exist in the case of a restricted shuffle $\sigma_{R,s,i}$ of infinite order such that $R \neq \interfo{0}{1}^{d-1}$.
\end{Rem}

\section{Generation by restricted shuffles: bulk of the proof}\label{Section Generation by restricted shuffles}

We now prove Theorem \ref{Theorem the set of all restricted shuffles is a generating subset of REC}, which states that $\Rec_d$ is generated by restricted shuffles. The proof is by induction on the dimension $d$ and the case of the dimension $1$ is already known to be true (Proposition \ref{IETrr}).

Let $d \geq 2$ be the ambient dimension and assume Theorem \ref{Theorem the set of all restricted shuffles is a generating subset of REC} true for $\Rec_{d-1}$.
Let $f \in \Rec_d$ and $\cQ$ be a grid-pattern associated with $f$. Thanks to Proposition \ref{Proposition We can always refine a grid-pattern into a setwise Q-free grid-pattern} we can assume that $\cQ$ is a setwise $\QQ$-free grid-pattern.

We will think of the $d$-th dimension as the ``vertical'' dimension and others as ``horizontal'' dimensions. For every illustration in dimension $2$ we use the element $f_{\mathrm{test}}$ of $\Rec_2$ defined in Figure \ref{fig_test}. The partition $\cP_{\mathrm{test}}$ (on the left of the picture) is associated with $f_{\mathrm{test}}$, and is understood to be setwise $\QQ$-free. We denote by $\cP'_{\mathrm{test}}=f_{\mathrm{test}}(\cP_{\mathrm{test}})$ (on the right of the picture).

\begin{figure}[!ht]
\includegraphics[width=1\textwidth]{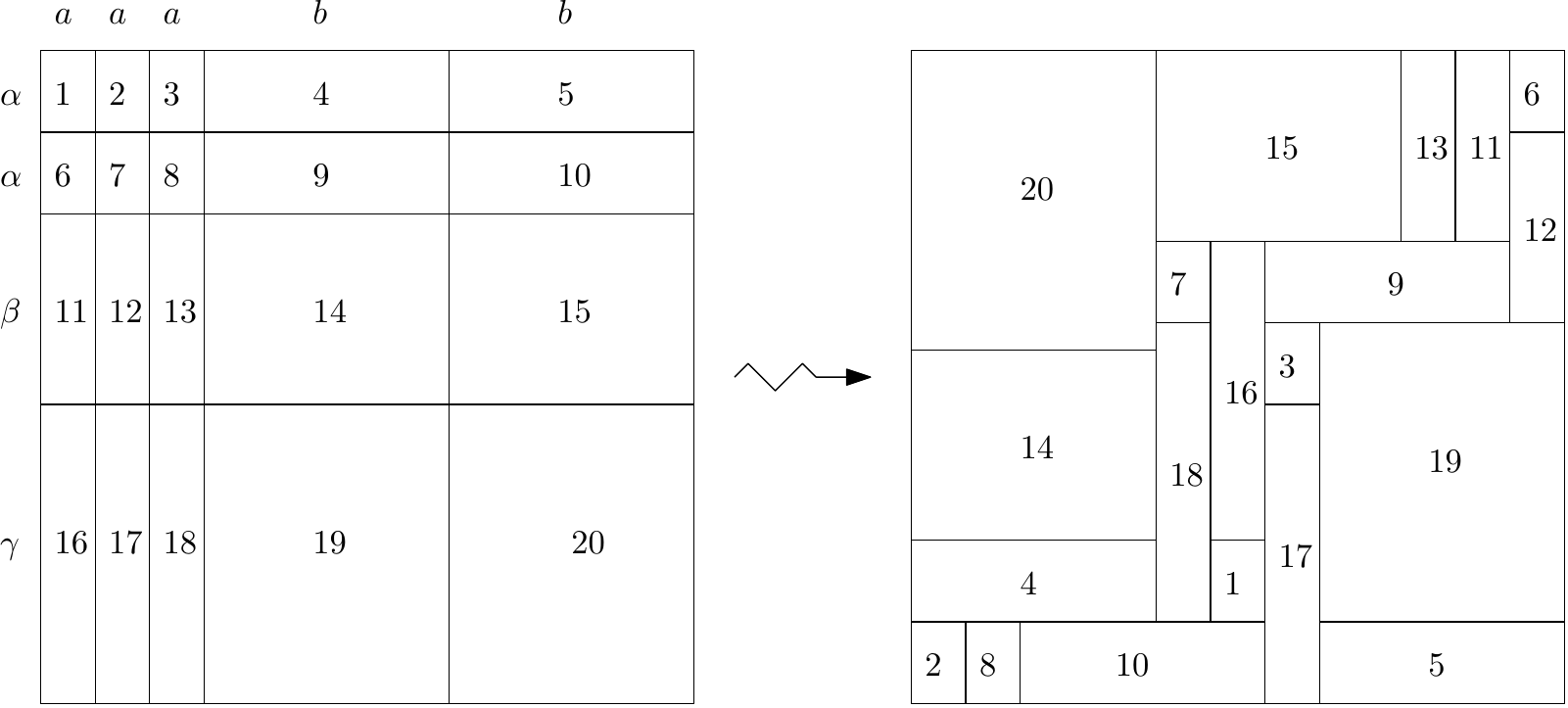}
\caption{Definition of $f_{\mathrm{test}}, \cP_{\mathrm{test}}$ and $\cP'_{\mathrm{test}}$.}
\label{fig_test}\end{figure}

We now introduce a number of simple definitions in this setting, which for this test example is illustrated in the next figures.

\begin{Def}\label{Definition tower above a piece}
Let $\cP$ be a setwise $\QQ$-free rectangle partition of $\interfo{0}{1}^d$. The {\bf ground} of $\cP$ is the following subset of $\cP$:
\[
\Grd(\cP)=\lbrace K \in \cP \mid 0 \in \pr_d(K) \rbrace .
\]
Let $K_0$ be an element of $\Grd(\cP)$. A {\bf tower} above $K_0$ is a subset $T$ of $\cP$ such that:

\begin{enumerate}
\item $K_0 \in T$;
\item $\forall K \in T, ~\pr_d^{\bot}(K)=\pr_d^{\bot}(K_0)$;
\item The set $\bigcup\limits_{K \in T} \pr_d(K)$ is a subinterval of $\interfo{0}{1}$.
\end{enumerate}
The element $K$ of $T$ which satisfies $\sup(\pr_d(K))=\sup \big( \bigcup\limits_{K \in T} \pr_d(K) \big)$ is called the {\bf top} of the tower $T$, denoted by $\Top(T)$.
The {\bf highest tower} above $K_0$, denoted by $T(K_0)$, is the maximal tower above $K_0$ according to the inclusion order.
\end{Def}

\begin{Def}
A {\bf city} of $\cP$ is a subset of $\cP$ containing $\Grd(\cP)$, and which is a union of towers. The {\bf highest city} of $\cP$, denoted by $\City(\cP)$, is the union of all highest towers above elements of the ground $\Grd(\cP)$.
The {\bf top} of a city $\mathcal{V} \subset \cP$ (see Figure \ref{topwork}) is the set of $\Top(T)$ when $T$ ranges over maximal towers in $\mathcal{V}$.
The {\bf sky} of $\cP$, denoted by $\Sky(\cP)$, is the complement of $\City(\cP)$ in $\cP$.

\end{Def}

\begin{figure}[!ht]
\begin{center}
\includegraphics[width=0.5\textwidth]{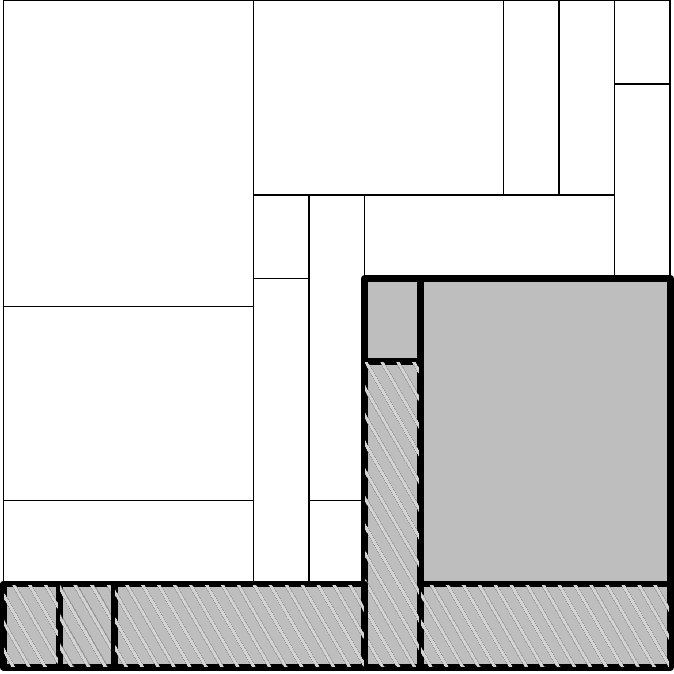}
\caption{Hatched pieces compose the Ground of $\cP'_{\mathrm{test}}$, it is also a city of $\cP'_{\mathrm{test}}$. All grey pieces (hatched or not) compose $\City(\cP'_{\mathrm{test}})$. Full white pieces represent the sky of $\cP'_{\mathrm{test}}$.}\label{Figure ground of the example}
\end{center}
\end{figure}

\begin{Def}
The complexity of $\cP$ is the following subset of $\mathopen{]} 0, 1 \mathclose{[}$:
\[
\mathscr{C}(\cP)= \lbrace \min(\pr_d(K)) \mid K \in \Sky(\cP) \rbrace .
\]
The set $\mathscr{C}(\cP)$ is empty if and only if $\cP= \City(\cP)$. Otherwise, the minimum of the set $\mathscr{C}(\cP)$ is called the {\bf working height} of $\cP$ denoted by $\whei(\cP)$.
\end{Def}

The idea is to move pieces of $\City(\cP)$ with horizontal restricted shuffles so that the new partition $\cP'$ obtained satisfies $\mathscr{C}(\cP') \subset \mathscr{C}(\cP) \smallsetminus \lbrace \whei(\cP) \rbrace$. For this we describe more precisely how and where we move pieces.

\begin{Def}
We define the {\bf building worksite} of $\cP$, denoted by $\Work^-(\cP)$, as the following subset of $\Top(\City(\cP))$:
\[
\Work^-(\cP)=\lbrace K \in \Top(\City(\cP)) \mid \sup(\pr_d(K))=\whei(\cP) \rbrace .
\]
Similarly we define the {\bf upper building worksite} of $\cP$, denoted by $\Work^+(\cP)$ (see Figure \ref{topwork}), as the following subset of $\Sky(\cP)$:
\[
\Work^+(\cP)=\lbrace P \in \Sky(\cP) \mid \min(\pr_d(P))=\whei(\cP) \rbrace .
\]
We define the {\bf site} of $\cP$ (see Figure \ref{figsite}) as the subset of $e_d^{\bot}$ define as the following:
\[
\Site(\cP)=\bigcup\limits_{ K \in \Work^-(\cP)} \pr_d^{\bot}(K) .
\]
\end{Def}

\begin{figure}[!ht]
\begin{center}
\includegraphics[width=0.6\textwidth]{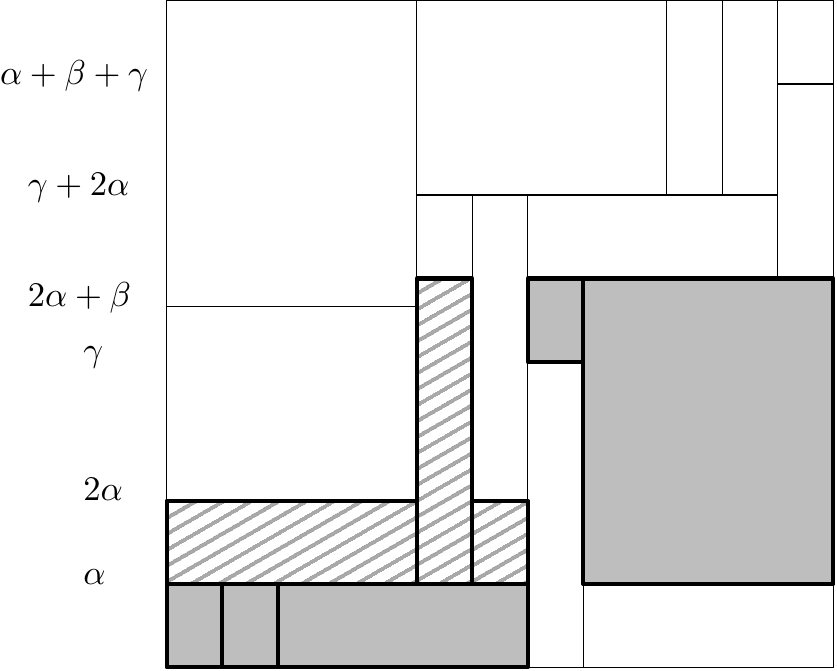}
\caption{The set of all grey pieces represents $\Top(\City(\cP'_{\mathrm{test}}))$ and the set of all hatched pieces represents $\Work^+(\cP)$.}\label{topwork}
\end{center}
\end{figure}

\begin{figure}[!ht]
\begin{center}
\includegraphics[width=0.7\textwidth]{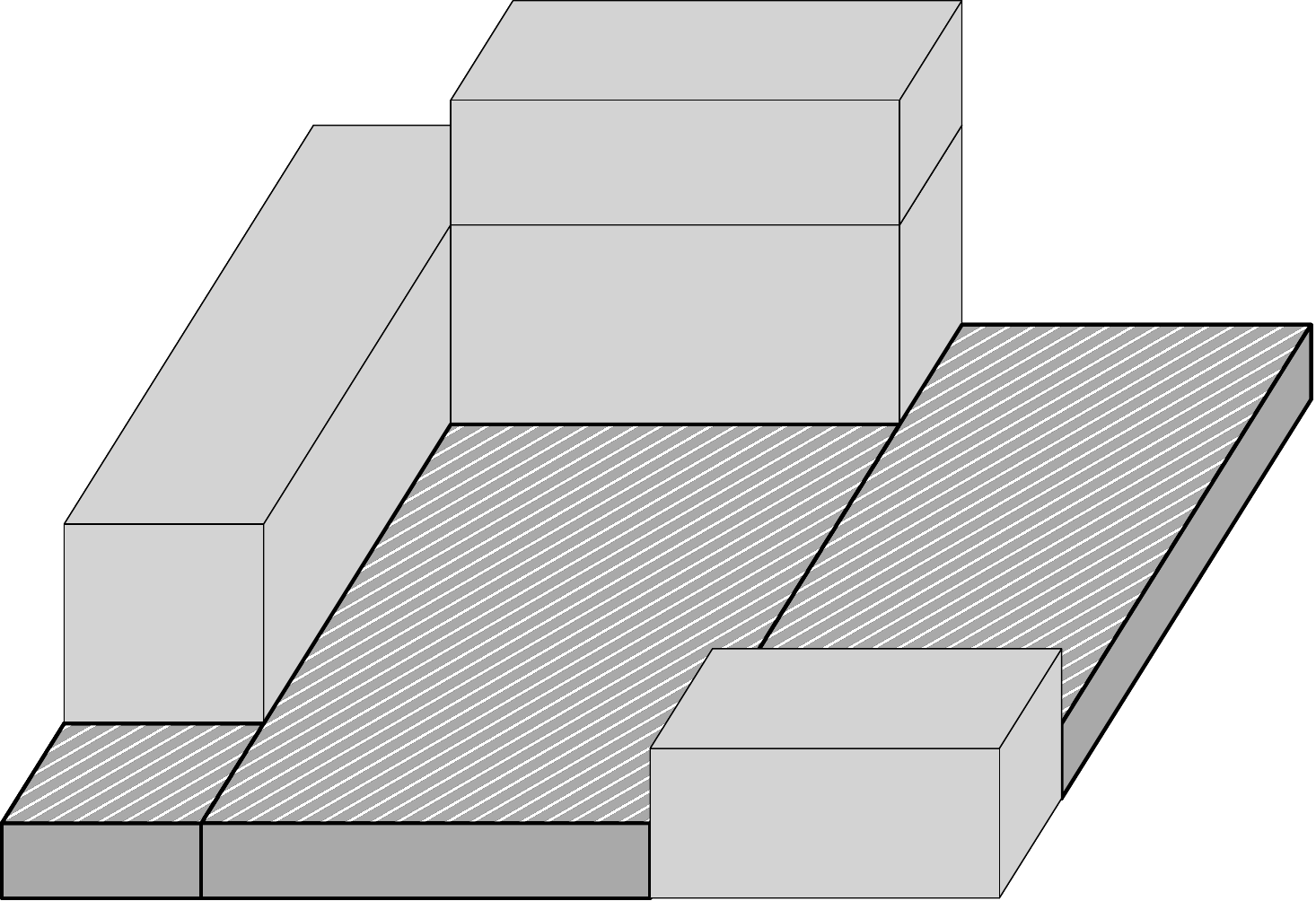}
\caption{In dimension $3$, illustration of a city of a partition where the hatched space represents the site of the partition.}\label{figsite}
\end{center}
\end{figure}

The proof of Theorem \ref{Theorem the set of all restricted shuffles is a generating subset of REC} is done by induction on the cardinal $c$ of $\mathscr{C}(\cP)$. The case $c=0$ is treated in the following lemma.

\begin{Lem}\label{Lemma case of an empty complexity}
Let $\cP$ be a setwise $\QQ$-free partition such that $\mathscr{C}(\cP)=\emptyset$. Then there exists a product $r$ of vertical restricted shuffles such that $\cP$ is associated with $r$ and $r(\cP)$ is a grid-pattern. (See Figure \ref{Figure illustration when the sky is empty}.)
\end{Lem}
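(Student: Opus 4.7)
The plan is to exploit the hypothesis $\mathscr{C}(\cP)=\emptyset$ to describe $\cP$ as a disjoint union of vertical stacks, then to straighten each stack into a common canonical shape by a column-wise restricted-rotation argument.

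First I would observe that $\mathscr{C}(\cP)=\emptyset$ is equivalent to $\Sky(\cP)=\emptyset$, i.e.\ $\cP=\City(\cP)$. Ground pieces partition the bottom face $[0,1\mathclose[^{d-1}\times\{0\}$, so their horizontal projections $R_0=\pr_d^{\bot}(K_0)$ form a rectangle partition of $[0,1\mathclose[^{d-1}$, and in particular are pairwise disjoint. Hence the highest towers $T(K_0)$ are pairwise disjoint, so $\cP=\bigsqcup_{K_0\in\Grd(\cP)}T(K_0)$. Moreover each tower must fill the whole column $R_0\times[0,1\mathclose[$, for otherwise some piece lying above would have horizontal projection $R_0$ and would belong either to $T(K_0)$ (contradicting maximality stopping below $1$) or to $\Sky(\cP)=\emptyset$.

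Next I would use setwise $\QQ$-freeness of $\cF_d$ to show that the multiset of vertical heights appearing in a tower does not depend on the tower. Indeed, for each $K_0$ the equality $\sum_{K\in T(K_0)}\lambda(\pr_d(K))=1$ expresses $1$ as a nonnegative integer combination of elements of $\cF_d$; by $\QQ$-linear independence this integer combination is unique, hence the multiplicity with which each $a\in\cF_d$ occurs among the heights of $T(K_0)$ is the same for every $K_0$. Fix once and for all a canonical ordering of this common multiset, which defines a partition $\cQ_d$ of $[0,1\mathclose[$ into intervals.

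Now for each ground piece $K_0$ there is a unique IET of $[0,1\mathclose[$ sending the vertical ordering of $\pr_d(T(K_0))$ to $\cQ_d$. By Proposition \ref{IETrr}, this IET is a finite product of restricted rotations $s_{K_0,1},\dots,s_{K_0,m}$, and each $\sigma_{R_0,s_{K_0,j},d}$ is a vertical restricted shuffle acting as the identity outside the column $R_0\times[0,1\mathclose[$. Different ground pieces have disjoint horizontal projections, so the partial products corresponding to distinct $K_0$'s have disjoint supports and commute. Their global composition $r$ is a finite product of vertical restricted shuffles; by construction $\cP$ is associated with $r$, and after applying $r$ every column carries the same vertical partition $\cQ_d$.

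Thus $r(\cP)=\{R_0\times I:K_0\in\Grd(\cP),\,I\in\cQ_d\}$, a ``product of horizontal base with $\cQ_d$''. The step I expect to be the most delicate is recognising that this output is a genuine grid-pattern: for $d=2$ this is immediate since any partition of $[0,1\mathclose[$ into intervals is a $1$-dimensional grid-pattern; in higher dimension the required grid structure of the horizontal base $\{R_0\}_{K_0\in\Grd(\cP)}$ needs to be read off from the setwise $\QQ$-free hypothesis and the way in which this lemma is invoked inside the outer induction on $d$.
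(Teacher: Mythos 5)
Your proof is essentially the paper's: decompose $\cP$ into full-height towers over a base partition $D$ of $[0,1\mathclose[^{d-1}$, use setwise $\QQ$-freeness of $\cF_d$ to force the multiset of vertical piece-heights to be the same in every tower (uniqueness of the nonnegative integer representation of $1$), then sort each column by Proposition~\ref{IETrr}. The concern you raise at the end is the right one, and it is not addressed in the paper's proof either: the paper simply asserts ``The image of $\cP$ by the product of these restricted shuffles is a grid-pattern,'' but that conclusion is equivalent to $D$ being a grid-pattern of $[0,1\mathclose[^{d-1}$, which does not follow from the stated hypotheses.

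In fact the lemma as literally stated fails for $d\ge 3$. Take $d=3$, pick irrational $a,b\in\mathopen]0,1\mathclose[$ with $b\neq 1/2$, and let $D$ be the ``brick-wall'' partition of $[0,1\mathclose[^{2}$ into $[0,a\mathclose[\times[0,b\mathclose[$, $[0,a\mathclose[\times[b,1\mathclose[$, $[a,1\mathclose[\times[0,1-b\mathclose[$, $[a,1\mathclose[\times[1-b,1\mathclose[$. Set $\cP=\{D_0\times[0,1\mathclose[:D_0\in D\}$. Then $\cF_1=\{a,1-a\}$, $\cF_2=\{b,1-b\}$, $\cF_3=\{1\}$, so $\cP$ is setwise $\QQ$-free, and each piece spans the full vertical direction, so $\cP=\City(\cP)$ and $\mathscr{C}(\cP)=\emptyset$. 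But $D$ is not a grid-pattern (any $\cQ_2$ would need to contain both $[0,b\mathclose[$ and $[0,1-b\mathclose[$), hence neither is $\cP$. Moreover, any product $r$ of direction-$d$ restricted shuffles fixes the first $d-1$ coordinates, so if $\cP\in\Pi_r$ then $r$ is a vertical translation on each full-height column and hence $r=\mathrm{id}$; thus $r(\cP)=\cP$ is never a grid-pattern. So the lemma, as it is applied in the proof of Theorem~\ref{Theorem the set of all restricted shuffles is a generating subset of REC}, implicitly needs the supplementary hypothesis that the base partition $D$ of the empty-complexity partition produced by Lemma~\ref{Lemma diminution of the cardinal of the complexity} is itself a grid-pattern of $e_d^{\bot}$. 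Neither your proposal nor the paper's text verifies this, and it is the genuine missing step.
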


\begin{proof}
A consequence of $\whei(\cP) = \emptyset$ is that $\City(\cP)=\cP$, that is, highest towers form a partition of $\mathcal{P}$. In particular, we have a partition $D$ of $e_d^{\bot}$ such that for every $x \in \interfo{0}{1}$ we have $\lbrace \pr_d^{\bot}(K) \mid K \in \cP, ~\textup{and}~ x \in \pr_d(P) \rbrace= D$.  Also the set $\lbrace \pr_d(K) \mid K \in \cP \rbrace$ is setwise $\QQ$-free, thus for every $a \in \interfo{0}{1}$, the number of rectangles $K$ such that $\lambda(\pr_d(K))= a$ is the same in every tower $T \subset \City(\cP)$. Then, using Proposition \ref{IETrr} in each tower, by using only restricted shuffles in direction $d$, we can move pieces inside the tower $T \subset \City(\cP)$ to reorder them according to the length of their projection on $\Vect(e_d)$. The image of $\cP$ by the product of these restricted shuffles is a grid-pattern.
\end{proof}

We now consider the induction step for $c>0$.

\begin{Lem}\label{Lemma diminution of the cardinal of the complexity}
Let $\cP$ be a setwise $\QQ$-free partition such that $\mathscr{C}(\cP) \neq \emptyset$. There exists a product $g$ of horizontal restricted shuffles (i.e., in direction $\neq d$) such that $\cP \in \Pi_g$ and:
\[
\mathscr{C}(g(\cP)) \subset \mathscr{C}(\cP) \smallsetminus \lbrace \whei(\cP) \rbrace .
\]
\end{Lem}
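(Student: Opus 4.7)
Let $h = \whei(\cP)$. The plan is to construct $g$ as a product of restricted shuffles in directions $\neq d$ that horizontally rearranges the pieces of $\Work^+(\cP)$ so that each such piece ends up sitting directly above a piece of $\Work^-(\cP)$ whose horizontal projection it matches. Once this alignment is achieved, each tower of $\City(\cP)$ whose top lies in $\Work^-(\cP)$ extends in $g(\cP)$ upward across $h$ and absorbs the relocated $\Work^+$ piece into the highest city. Because horizontal shuffles preserve every $e_d$-coordinate, no new starting heights appear in $\mathscr{C}(g(\cP))$, so $h$ is simply removed and $\mathscr{C}(g(\cP)) \subseteq \mathscr{C}(\cP) \setminus \{h\}$.

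The first step is a geometric observation: every $P \in \Work^+(\cP)$ satisfies $\pr_d^\bot(P) \subseteq \Site(\cP)$, and conversely the $\Work^+(\cP)$-pieces together partition $\Site(\cP)$ at any height $h + \varepsilon$. To prove the first part, fix $y \in \pr_d^\bot(P)$ and let $K$ be the piece of $\cP$ containing $(y, h - \varepsilon)$ for small $\varepsilon > 0$. Then $K \in \City(\cP)$ by minimality of $h$ in $\mathscr{C}(\cP)$; the top of $K$ equals $h$ (otherwise the piece at $(y, h + \varepsilon)$, which is $P$, would coincide with $K$, contradicting $P \in \Sky(\cP)$); and $\pr_d^\bot(K) \neq \pr_d^\bot(P)$ (otherwise $P$ would extend $K$'s highest tower and belong to $\City(\cP)$). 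Hence $K \in \Work^-(\cP)$ and $y \in \Site(\cP)$. The dual statement follows by a similar analysis, using that the horizontal projections of $\Work^-(\cP)$-pieces partition $\Site(\cP)$ disjointly and so cannot accommodate a piece below a city piece of bottom $h$ over the site without that lower piece failing to be a tower-top.

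Consequently, the horizontal projections of $\Work^-(\cP)$ and of $\Work^+(\cP)$ give two partitions of $\Site(\cP)$ by rectangles whose side lengths belong to the setwise $\QQ$-free families $\cF_j$ ($j \neq d$) attached to $\cP$. By Lemma \ref{Lemma two partitions with Q-conditions contains the same number of pieces}, there exists a shape-preserving bijection between these two partitions. The principal technical obstacle is to realize this bijection as the horizontal action of a product of restricted shuffles in $\Rec_d$ in directions $\neq d$, with $\cP \in \Pi_g$. I would apply the inductive hypothesis (Theorem \ref{Theorem the set of all restricted shuffles is a generating subset of REC} for $\Rec_{d-1}$) to the $(d-1)$-dimensional element of $\Rec_{d-1}$ implementing this bijection on $\Site(\cP)$, and then lift each $(d-1)$-dimensional restricted shuffle obtained to a horizontal $d$-dimensional restricted shuffle whose acting region $R \subseteq e_j^\bot$ is chosen to confine its effect to the $\Work^+(\cP)$-pieces. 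The remaining book-keeping, namely picking the acting regions so they cut no piece of $\cP$ and disturb neither the city below $h$ nor any higher sky piece, can be arranged after a further setwise $\QQ$-free refinement of $\cP$ via Theorem \ref{Theorem ultrasimplicially ordered group}. The resulting $g$ then satisfies $\cP \in \Pi_g$ and $\mathscr{C}(g(\cP)) \subseteq \mathscr{C}(\cP) \setminus \{h\}$.
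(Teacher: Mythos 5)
Your strategy parallels the paper's up to the point of producing, via Lemma \ref{Lemma two partitions with Q-conditions contains the same number of pieces} and the inductive hypothesis for $\Rec_{d-1}$, a map $\delta \in \Rec_{d-1}$ matching the horizontal projections of $\Work^-(\cP)$ with those of $\Work^+(\cP)$ (your geometric observation that $\Work^+(\cP)$ projects exactly onto $\Site(\cP)$ is correct and implicit in the paper's definitions of $\Omega^\pm$). The divergence — and the gap — is in how $\delta$ is lifted to $\Rec_d$. You propose lifting the constituent $(d-1)$-dimensional shuffles to horizontal shuffles ``confined to the $\Work^+(\cP)$-pieces''; but a horizontal restricted shuffle in direction $j$ acts on a full slab $\{x : \pr_j^\bot(x) \in R\}$, and the $e_d$-extent of $R$ cannot simultaneously avoid cutting the $\Work^+$-pieces (which have varying heights) and avoid displacing material above them. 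If instead one lifts by $\delta \times \Id$ on the whole half-space $\{\pr_d \geq \whei(\cP)\}$, then the higher sky pieces — whose horizontal projections are unconstrained — get cut by $\delta$, and $\cP \notin \Pi_g$. Your proposed fix, a ``further setwise $\QQ$-free refinement of $\cP$'', only yields $\cP' \in \Pi_g$ for a refinement $\cP'$, not $\cP \in \Pi_g$ as the lemma demands; one would then have to restate the lemma and re-examine the iteration in Proposition \ref{Proposition reduction to a partition with an empty complexity}, none of which you do.

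The paper avoids all of this by moving the \emph{city} rather than the sky: it defines $g = (\delta \times \Id)$ on $\{\pr_d(x) < \whei(\cP)\}$ and identity elsewhere. This direction works without any refinement precisely because everything below $\whei(\cP)$ is organised into towers whose footprints form the partition $\Omega^-$, which $\delta$ is a translation on by construction (and $\delta$ is identity over the footprints of towers that straddle $\whei(\cP)$, so no piece is cut). The crucial structural fact you have not exploited is that the city has this coherent vertical tower structure while the sky does not; that asymmetry dictates which half must be moved.
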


\begin{proof}
For every $1 \leq i \leq d$ define $F_i= \lbrace \lambda(\pr_i(K)) \mid K \in \cP \rbrace$; it is a setwise $\QQ$-free subset of $\RR^+$. Define $\Omega^-=\lbrace \pr_d^{\bot}(K) \mid K \in \Top(\City)(\cP) \rbrace$ and $\Omega^+=\lbrace \pr_2(K) \mid K \in \Work^+ \cup \Top(\City(\cP)) \smallsetminus \Work^- \rbrace$. By definition, $\Omega^-$ and $\Omega^+$ are two partitions of $\interfo{0}{1}^{d-1}$ such that for every $K \in \Omega^- \cup \Omega^+$ and every $1 \leq i \leq d$ we have $\lambda (\pr_i(K)) \in F_i$. Then, by Lemma \ref{Lemma two partitions with Q-conditions contains the same number of pieces} we deduce that there exists $\delta \in \Rec_{d-1}$ such that $\Omega^- \in \Pi_{\delta}$ (for every element $K$ of $\Omega^-$, the restriction of $\delta$ to $K$ is a translation) and $\delta(\Omega^-)=\Omega^+$ and for every $K \in \Omega^- \cap \Omega^+$ we have $\delta(K)=K$. As we assumed Theorem \ref{Theorem the set of all restricted shuffles is a generating subset of REC} in dimension $d-1$, we know that $\delta$ can be written as the product of restricted shuffles of $\Rec_{d-1}$. Then we define $g \in \Rec_d$ such that:
\[
g(x)=\left\{
\begin{array}{cc}
(\delta \times \Id) (x) & \textup{if}~ \pr_2(x) < \whei(\cP)\\
x & \textup{else}.

\end{array}
\right.
\]

From this definition we obtain that $g$ is the product of restricted shuffles in $\Rec_d$ with direction in $\lbrace 1,2 \ldots, d-1 \rbrace$. Also by definition of $\delta$ we obtain that for every $K \in \Grd(\cP)$ we have $g(T(K)) \subset T(g(K))$ and $g(\Sky(\cP))=\Sky(\cP)$. This implies $\mathscr{C}(g(\cP)) \subset \mathscr{C}(\cP)$. Also as $\delta(\Omega^-)=\Omega^+$ we deduce that for every $K \in \Sky(\cP)$ such that $\min(\pr_d(K))=\whei(\cP)$ there exists $Q_K \in \Grd(\cP)$ such that $\delta(\pr_d^{\bot}(Q_K)=\pr_d^{\bot}(K)$. Hence we have $K \in T(g(Q_K))$ and this implies that $\whei(\cP) \notin \mathscr{C}(g(\cP))$.
\end{proof}

Then by induction on the cardinal of the complexity we deduce the following proposition:

\begin{Prop}\label{Proposition reduction to a partition with an empty complexity}
Let $\cQ$ be a setwise $\QQ$-free grid-pattern of $\interfo{0}{1}^d$. For every $f \in \Rec_d$ such that $\cQ \in \Pi_f$, there exists a finite product $r_f$ of restricted shuffles such that $f(\cQ) \in \Pi_{r_f}$ and $\mathscr{C}(r_f(f(\cQ)))=\emptyset$.
\end{Prop}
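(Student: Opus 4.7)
The plan is a short induction on $c = \Card(\mathscr{C}(f(\cQ)))$, with Lemma \ref{Lemma diminution of the cardinal of the complexity} as the engine. The base case $c=0$ is trivial: take $r_f = \Id$, and then $r_f(f(\cQ)) = f(\cQ)$ already has empty complexity.

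For the inductive step, assume the result for every pair $(\cQ',f')$ with $\Card(\mathscr{C}(f'(\cQ'))) < c$, and suppose we are given $(\cQ,f)$ with $\Card(\mathscr{C}(f(\cQ))) = c > 0$. Before invoking Lemma \ref{Lemma diminution of the cardinal of the complexity} we need to know that $f(\cQ)$ is itself setwise $\QQ$-free. This holds because $\cQ \in \Pi_f$ means $f$ translates each piece of $\cQ$ onto the corresponding piece of $f(\cQ)$; hence the two partitions share the same multiset of $i$-projection lengths for every $i$, and setwise $\QQ$-freeness transfers from $\cQ$ to $f(\cQ)$. Applying Lemma \ref{Lemma diminution of the cardinal of the complexity} to $f(\cQ)$ furnishes a product $g$ of horizontal restricted shuffles with $f(\cQ) \in \Pi_g$ and $\mathscr{C}(g(f(\cQ))) \subsetneq \mathscr{C}(f(\cQ))$; in particular $\Card(\mathscr{C}(g(f(\cQ)))) < c$.

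Now set $f' := g \circ f$. Since $f$ acts by translation on each piece of $\cQ$ and $g$ acts by translation on each piece of $f(\cQ)$, the composition $f'$ acts by translation on each piece of $\cQ$; thus $\cQ \in \Pi_{f'}$ and $f'(\cQ) = g(f(\cQ))$. By the induction hypothesis applied to $(\cQ, f')$, there is a product $r_{f'}$ of restricted shuffles with $f'(\cQ) \in \Pi_{r_{f'}}$ and $\mathscr{C}(r_{f'}(f'(\cQ))) = \emptyset$. Set $r_f := r_{f'} \circ g$. Then $r_f$ is a product of restricted shuffles, and $f(\cQ) \in \Pi_{r_f}$ because $g$ translates each piece $K$ of $f(\cQ)$ to a piece $g(K)$ of $f'(\cQ)$ on which $r_{f'}$ acts by translation. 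Finally $r_f(f(\cQ)) = r_{f'}(f'(\cQ))$ has empty complexity, completing the induction.

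The real work has already been done in the preceding lemma: Lemma \ref{Lemma diminution of the cardinal of the complexity} is what guarantees a single step of the induction, and that lemma in turn relies on Theorem \ref{Theorem the set of all restricted shuffles is a generating subset of REC} in dimension $d-1$ to realize the top-layer rearrangement $\delta$ as a product of lower-dimensional restricted shuffles. Thus the only obstacle specific to the present statement is the preservation of setwise $\QQ$-freeness across the iteration, which is immediate because every map involved (both $f$ and the successive $g$'s) is piecewise a translation and therefore moves each piece to a translation-isometric piece, leaving the relevant sets of projection lengths unchanged.
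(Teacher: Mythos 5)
Your proof is correct and is exactly the induction the paper intends: the paper states this proposition with the one-line justification ``by induction on the cardinal of the complexity we deduce the following proposition,'' relying on Lemma~\ref{Lemma diminution of the cardinal of the complexity} for the step. You simply spell out the details the paper leaves implicit, including the (correct and worth noting) observation that setwise $\QQ$-freeness transfers from $\cQ$ to $f(\cQ)$ and on through the iteration because all maps involved are piecewise translations.
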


Thanks to Proposition \ref{Proposition reduction to a partition with an empty complexity} and Proposition \ref{Lemma case of an empty complexity} we deduce Theorem \ref{Theorem the set of all restricted shuffles is a generating subset of REC}.

The mains steps in the proof are illustrated in Figure \ref{Figure induction to reduce the cardinal of the complexity}.

\begin{figure}[!ht]
\begin{center}
\includegraphics[width=0.8\textwidth]{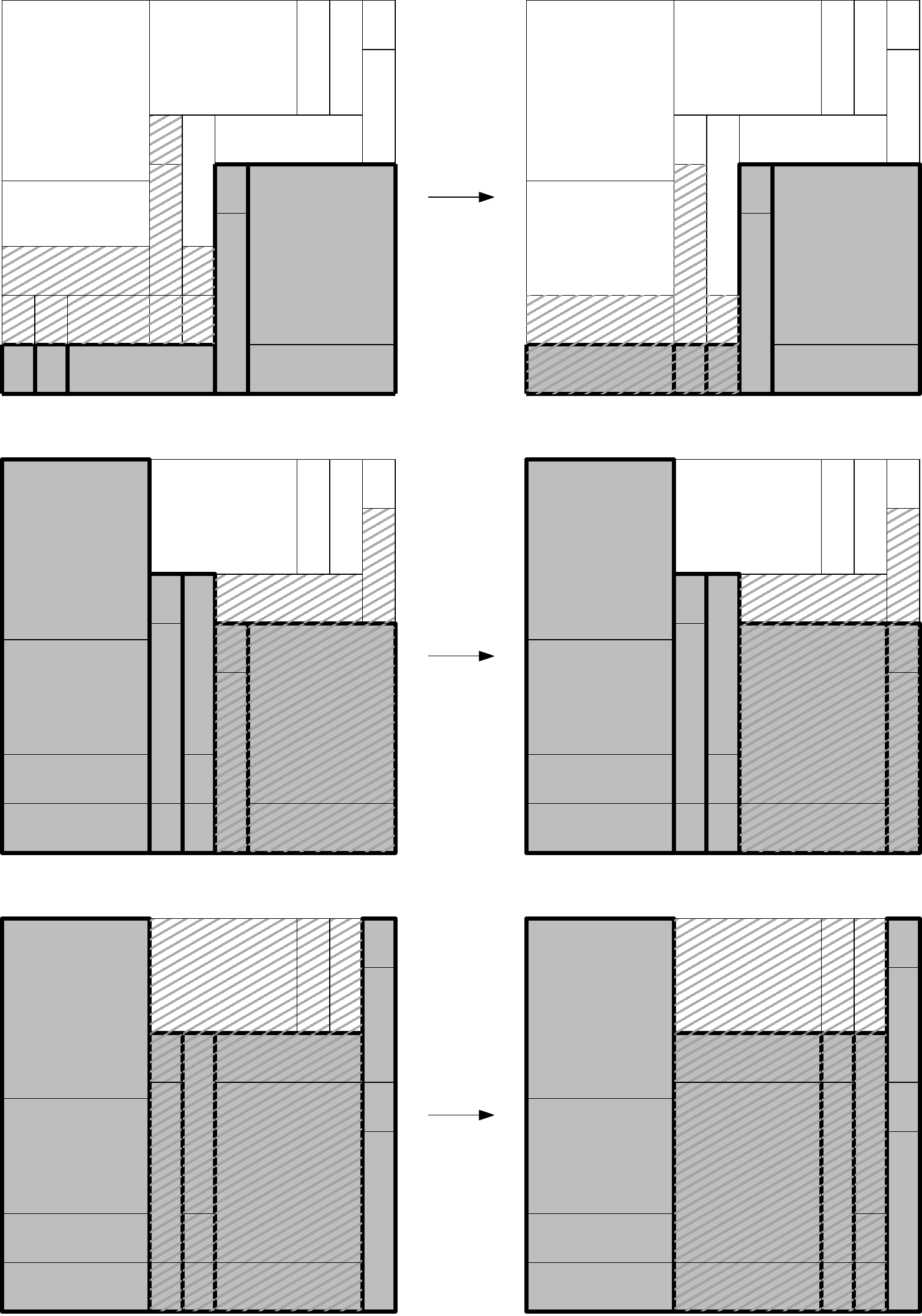}
\caption{Iterations to find a product $r$ of restricted shuffles such that $\cP'_{\mathrm{test}} \in \Pi_r$ and $\Sky(r(\cP'_{\mathrm{test}}))=\emptyset$. On each left picture, all grey pieces represent the highest city, all grey hatched pieces represent towers whose top's height is the complexity of the partition and all white hatched pieces represent pieces of sky of the partition which are also in the upper work.}\label{Figure induction to reduce the cardinal of the complexity}
\end{center}
\end{figure}

\newpage

\begin{figure}[!ht]
\begin{center}
\includegraphics[width=0.9\textwidth]{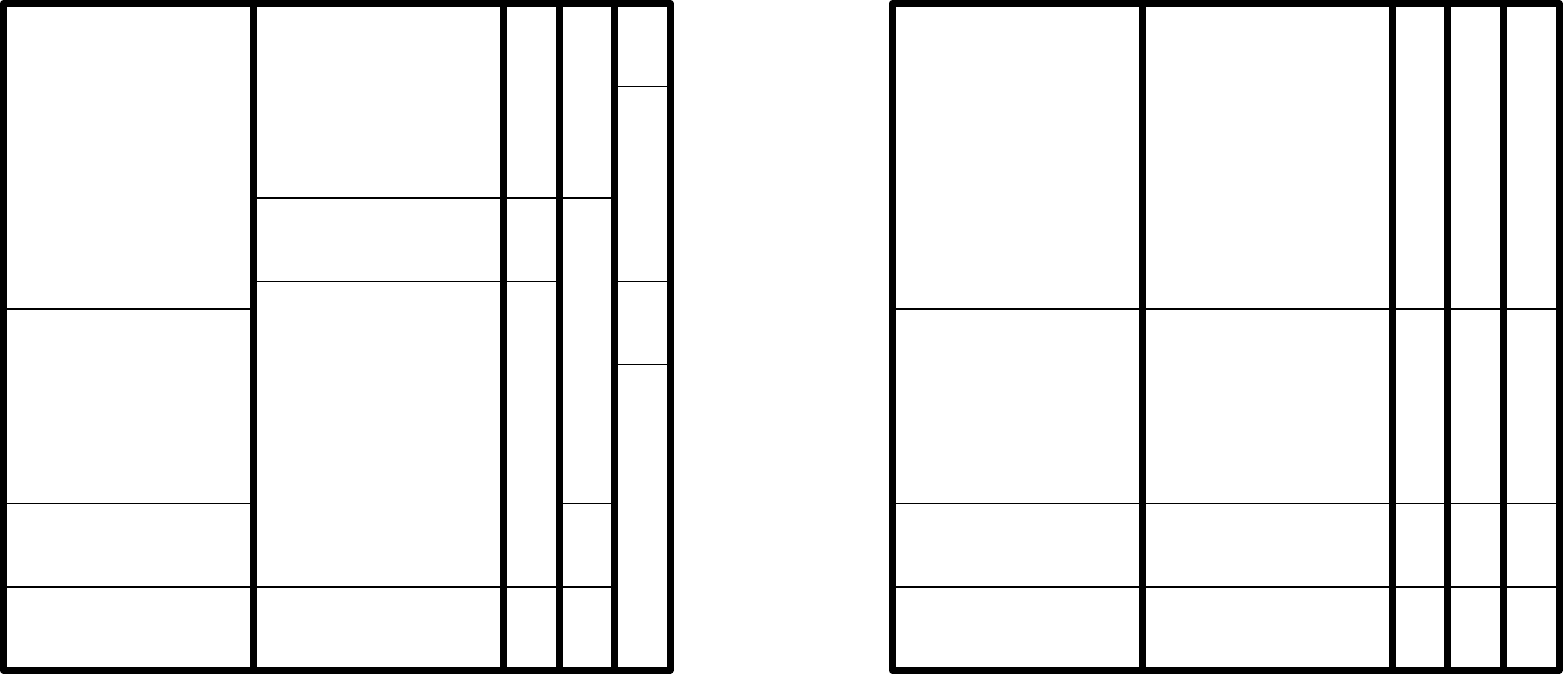}
\caption{Illustration of what looks like a setwise $\QQ$-free partition with an empty sky and how moving pieces inside each tower can lead to a setwise $\QQ$-free grid-pattern.}\label{Figure illustration when the sky is empty}
\end{center}
\end{figure}

%\newpage
%\newpage

\section{Generation by restricted shuffles: a refinement for $\Rec_2$}\label{Section A refinement for REC 2}

Here we establish a more precise and concrete statement in dimension 2. Theorem \ref{Theorem the set of all restricted shuffles is a generating subset of REC} says every element $f$ in $\Rec_d$ can be obtained as a composition of restricted shuffles. It is tempting to improve this statement by fixing a setwise $\QQ$-free partition $\cP \in \Pi_f$, and then shuffling rectangles in $f(\cP)$ without changing the partition. The proof seems at first sight to provide this, but the induction step forces to change the partition. In dimension 2, we can avoid this, see Theorem \ref{Theorem reordering of a setwise Q-free partition can lead to a grid-pattern} below.

In this case we can be more precise than Theorem \ref{Theorem the set of all restricted shuffles is a generating subset of REC}.

\begin{Def}
Let $\cP$ be a partition into rectangles of $\interfo{0}{1}^d$. A {\bf restricted shuffle on $\cP$} is a restricted shuffle which shuffles two rectangles of $\cP$.
For $n \in \mathbf{N}^*$, a {\bf $n$-sequence of restricted shuffles on $\cP$} is a sequence $(r_1,\ldots,r_n)$ of restricted shuffles such that for every $1 \leq i \leq n$ the element $r_i$ is a restricted shuffle on $r_{i-1} \circ \ldots \circ r_1(\cP)$. The partition $r_n \circ \ldots \circ r_1 (\cP)$ is called the image of $\cP$ by this sequence.
\end{Def}

Here is the refined version of Theorem \ref{Theorem the set of all restricted shuffles is a generating subset of REC}, in dimension 2

\begin{Thm}\label{Theorem reordering of a setwise Q-free partition can lead to a grid-pattern}
Suppose $d=2$. For every $f \in \Rec_d$ and for every setwise $\QQ$-free partition $\cP \in \Pi_f$, there exists a sequence of restricted shuffles $(r_1,\ldots,r_n)$ on $\cP$ such that $f=r_n \circ \ldots \circ r_1$.
\end{Thm}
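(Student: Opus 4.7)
The plan is to refine the induction of Section \ref{Section Generation by restricted shuffles}, following the same skeleton on the cardinal $c := |\mathscr{C}(f(\cP))|$, but insisting at each step that the restricted shuffles used act on two rectangles of the current partition.

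For the base case $c = 0$, the proof of Lemma \ref{Lemma case of an empty complexity} reorders pieces inside each tower via vertical restricted rotations; consecutive pieces in a single tower share a facet orthogonal to $e_2$, so each such rotation is already a shuffle on the current partition. The concluding grid-pattern step (Proposition \ref{Proposition Special case where we map a grid to another grid}) is similar: within a grid-pattern, consecutive pieces in any row or column share a facet, so the induced element of $\IET^2$ and the subsequent transpositions of translation-isometric pieces are shuffles on the current grid-pattern.

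The substance of the proof is the refinement of Lemma \ref{Lemma diminution of the cardinal of the complexity} in dimension $d=2$. There, $\delta \in \IET$ is obtained via Lemma \ref{Lemma two partitions with Q-conditions contains the same number of pieces} and then decomposed by Proposition \ref{IETrr} into a product of one-dimensional restricted rotations on a common refinement of $\Omega^-$ and $\Omega^+$. Each such rotation lifts, via $g=(\delta \times \mathrm{Id})$ on the strip $[0,1) \times [0, \whei(\cP))$, to a direction-$1$ two-dimensional restricted shuffle which \emph{a priori} is not a shuffle on the current partition, because the strip crosses several horizontal cuts of towers of varying heights.

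To overcome this, for each one-dimensional restricted rotation of $\delta$ (which swaps two horizontally adjacent tower-columns $C$ and $C'$ of the city below the working height), I will realize the corresponding two-dimensional swap as a sequence of direction-$1$ restricted shuffles between horizontally-adjacent pieces of the current partition. Concretely, I process the two columns level by level from top to bottom: at each height interval between two consecutive horizontal cuts, the two corresponding pieces of $C$ and $C'$ either already share a vertical facet (so their exchange is a rectangle transposition, which is a shuffle on the current partition) or can first be made to match using setwise $\QQ$-freeness and Lemma \ref{Lemma two partitions with Q-conditions contains the same number of pieces} applied at the one-dimensional level of vertical cuts. Composing the lifts of all one-dimensional rotations of $\delta$ realizes $g$ as a product of shuffles on the current partition. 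The main obstacle will be checking that intermediate partitions remain setwise $\QQ$-free rectangle partitions so that the induction hypothesis continues to apply; this follows because each rectangle transposition preserves the sizes involved, hence preserves setwise $\QQ$-freeness.
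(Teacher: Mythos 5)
Your proposal follows the same skeleton as the paper's proof (induction on $|\mathscr{C}(\cP)|$, refine Lemma \ref{Lemma diminution of the cardinal of the complexity} so that all shuffles act on pieces of the current partition), and the base case is handled correctly. The substance, however, is in the step you phrase as "can first be made to match," and that is exactly where your account is underspecified.

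Here is the precise issue. When you lift a one-dimensional restricted rotation of $\delta$ to a direction-$1$ shuffle of two adjacent tower-columns below $\whei(\cP)$, the obstruction is that the horizontal cuts of the two columns need not coincide. Invoking Lemma \ref{Lemma two partitions with Q-conditions contains the same number of pieces} "at the one-dimensional level of vertical cuts" is the right instinct, but you must say why the hypothesis of that lemma is satisfied and what operation realizes the "matching." The relevant observations, which the paper makes explicit, are: (i) every tower-column in the building worksite over a given connected component $C$ of $\Site(\cP)$ has total height exactly $\whei(\cP)$; (ii) because $F_2 = \{\lambda(\pr_2(K)) : K \in \cP\}$ is $\QQ$-free, the decomposition of $\whei(\cP)$ as a nonnegative integer combination of elements of $F_2$ is unique, so all such columns carry the \emph{same multiset} of piece-heights; (iii) therefore, after sorting the pieces of each tower by increasing $2$-projection — which uses only vertical restricted rotations on pairs sharing a horizontal facet, hence restricted shuffles on the current partition — the horizontal cuts in the two columns coincide exactly, and the horizontal swap factors level by level into restricted shuffles on the current partition. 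The paper's proof performs this sorting as a global preprocessing step before applying $\delta$; without stating (i)--(iii) and the sorting move, your "made to match" is just a placeholder for the part of the argument that actually needs to be proved.

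One smaller slip: you say that when two adjacent pieces "already share a vertical facet" their exchange is a rectangle transposition. Sharing a vertical facet is automatic for horizontally-adjacent pieces; what you actually need is that they have the \emph{same} $2$-projection, which makes the exchange a restricted shuffle in direction $1$ (a rectangle transposition only if in addition they have the same width). Your final remark — that restricted shuffles on the partition preserve the set of piece sizes, hence preserve setwise $\QQ$-freeness — is correct and is indeed the reason the induction can continue.
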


\begin{Rem}
To motivate the setwise $\QQ$-free property, we illustrate with a partition which is in the image by $\Rec_d$ of a grid-pattern $\cQ$, and which is not setwise $\QQ$-free. Indeed if we do not allow to cut pieces of $\cQ$ then for every sequence of restricted shuffles on $\cQ$, the image of $\cQ$ by this sequence is always $\cQ$.

\begin{figure}[!ht]
\begin{center}
\includegraphics[width=0.7\textwidth]{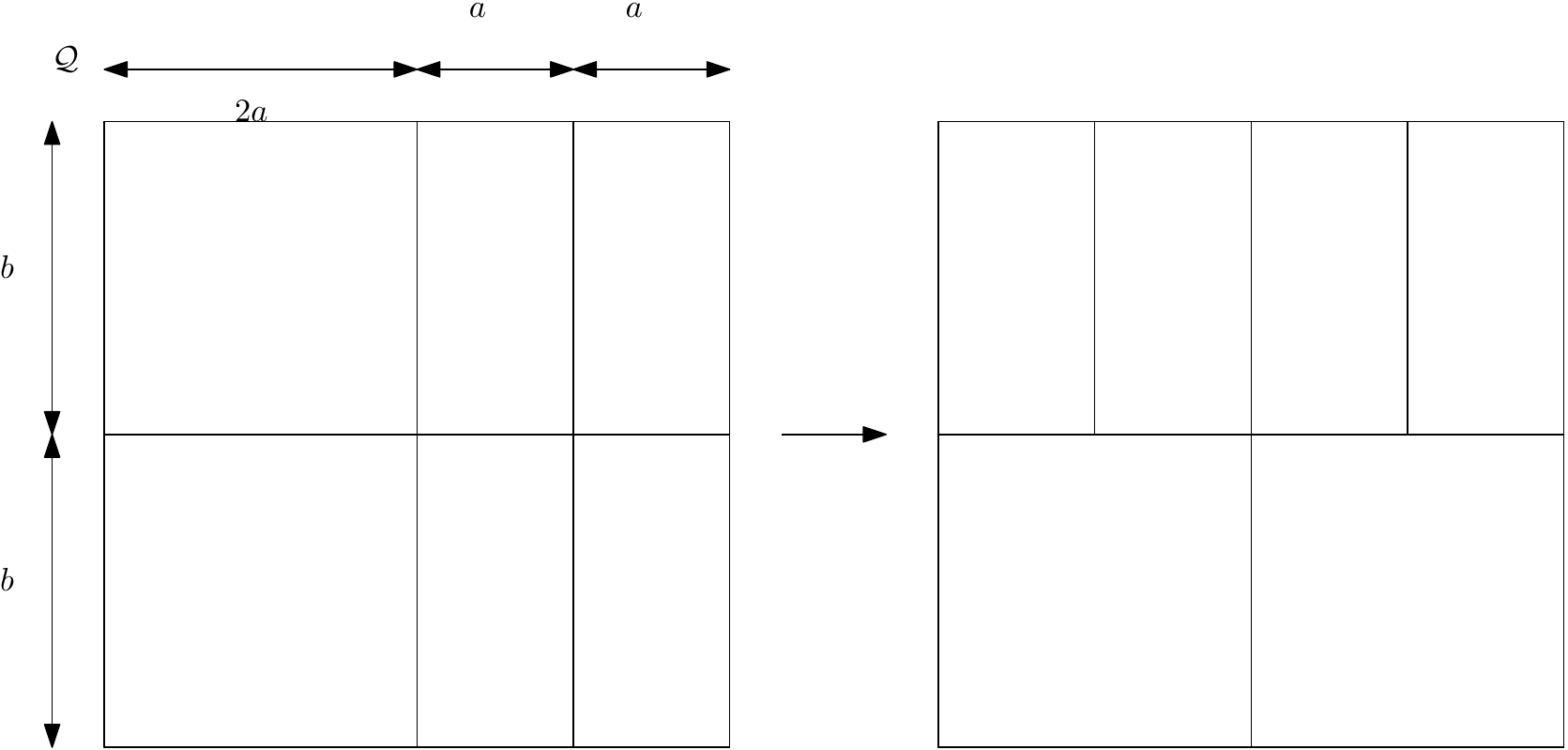}
\caption{\textbf{Left:} A grid-pattern $\cQ$ which is not setwise $\QQ$-free. \textbf{Right:} A rearrangement of $\cQ$ which is not the image of $\cQ$ by a sequence of restricted shuffles on $\cQ$.}
\end{center}
\end{figure}
\end{Rem}

Theorem \ref{Theorem reordering of a setwise Q-free partition can lead to a grid-pattern} in dimension $1$ is Proposition \ref{IETrr}. In dimension $2$, we begin with two refinements of Proposition \ref{Proposition Special case where we map a grid to another grid} and Lemma \ref{Lemma case of an empty complexity} obtained with immediate changes.

\begin{Lem}
Let $d \in \NN^*$, let $f \in \Rec_d$ such that there exists a setwise $\QQ$-free grid-pattern $\cQ$ such that $f(\cQ)$ is a grid-pattern. Then there exists a sequence $(r_1,\ldots,r_n)$ of restricted shuffles on $\cQ$ such that the image of $\cQ$ by this sequence is $f(\cQ)$, in particular we have $f=r_n \circ \ldots \circ r_1$. \qed
\end{Lem}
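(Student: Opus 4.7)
The plan is to mirror the proof of Proposition \ref{Proposition Special case where we map a grid to another grid}, arranging the factors so that each shuffle acts on two rectangles of the \emph{current} partition. As in that proof, the setwise $\QQ$-freeness of $\cQ$ and the grid-pattern property of $f(\cQ)$ imply (via Lemma \ref{Lemma two partitions with Q-conditions contains the same number of pieces}) that $f(\cQ)$ has, in each direction, the same multiset of interval lengths as $\cQ$. Hence there exists $h \in \IET^d$ with $h(\cQ) = f(\cQ)$ as partitions, and $p := h^{-1} \circ f$ permutes the rectangles of $\cQ$ by translations, so $p(\cQ) = \cQ$. Writing $f = h \circ p$, I will first realize $p$ as a sequence of restricted shuffles on $\cQ$ (returning the partition to $\cQ$), and then realize $h$ starting from $\cQ$ and ending at $f(\cQ)$.

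To realize $p$, I decompose it as a product of rectangle transpositions $\tau_{P,Q}$ of translation-isometric pairs of $\cQ$-rectangles, and factor each $\tau_{P,Q}$ as $t_1 \cdots t_d \cdots t_1$ following the proof of Proposition \ref{Proposition any transposition is a product of restricted shuffles}, where each $t_i$ transposes two $\cQ$-rectangles differing only in the $i$-th coordinate and sharing a common $\pr_i^{\bot}$. When the two rectangles are adjacent in direction $i$, $t_i$ is directly a restricted shuffle on the current partition. Otherwise, with intermediate $\cQ$-rectangles $Y_1, \dots, Y_k$ between them, I replace the two-shuffle construction of Proposition \ref{Proposition any transposition is a product of restricted shuffles} by a bubble-sort sequence of $2k+1$ shuffles: bubble the first rectangle past $Y_1, \dots, Y_k$ by successive swaps of adjacent rectangles of the current partition, swap with the second, then bubble back through $Y_k, \dots, Y_1$. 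A direct position-tracking computation shows that each $Y_j$ returns to its original position and the desired transposition is effected; between successive transpositions the partition is restored to $\cQ$.

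To realize $h$, I decompose $h = h_1 \times \cdots \times h_d$ coordinate-wise and further express each $h_i$ as a product of restricted rotations on the successive $1$-dimensional partitions via Proposition \ref{IETrr}. Each restricted rotation in direction $i$ corresponds, in the current grid-pattern, to exchanging two adjacent slabs in direction $i$; I realize this slab-swap by restricted shuffles on the current partition, one for each cross-section $K \in \prod_{j \neq i} \cQ_j$, swapping $I \times K$ with $I' \times K$ where $I, I'$ are the two adjacent intervals exchanged. Each such swap is a shuffle of two adjacent rectangles of the current partition, and after all cross-section-wise swaps for one restricted rotation the partition is again a grid-pattern, so the next restricted rotation can be processed similarly.

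The main obstacle I expect is the bubble-sort step in the first phase: during bubbling the current partition ceases to be a grid-pattern and the positions of the participating rectangles shift as they are swapped, so one must verify carefully that at each step of the sequence the two rectangles to be shuffled are adjacent rectangles of the current partition, and that the total composition equals $t_i$ with all intermediates restored to their original positions.
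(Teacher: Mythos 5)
Your proposal is correct and follows the route the paper has in mind, namely redoing the proof of Proposition \ref{Proposition Special case where we map a grid to another grid} while arranging every shuffle to act on two rectangles of the current partition. The paper records this only as ``obtained with immediate changes'', and you have supplied the two details that are actually required: realizing $g\in\IET^d$ by cross-section-wise shuffles of adjacent grid rectangles, and replacing each long-range rectangle transposition by the $(2k+1)$-step bubble-sort, whose position-tracking check (net shift of each $Y_j$ is $-\lambda(P_i)+\lambda(Q_i)=0$ since $P,Q$ are translation-isometric) does go through.
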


\begin{Lem}
Let $\cP$ be a setwise $\QQ$-free partition such that $\mathscr{C}(\cP)=\emptyset$. Then there exists a sequence of restricted shuffles on $\cP$ such that the image of $\cP$ by this sequence is a grid-pattern. \qed
\end{Lem}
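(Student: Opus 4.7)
The plan is to sort each highest tower of $\cP$ independently, using only swaps of vertically consecutive rectangles. Since $\mathscr{C}(\cP)=\emptyset$ forces $\City(\cP)=\cP$, the partition $\cP$ is the disjoint union of its highest towers; each tower $T$ above $K_0\in\Grd(\cP)$ has all its rectangles stacked over the horizontal footprint $R_T:=\pr_2^\bot(K_0)$, and the set of these footprints is a partition $D$ of $\interfo{0}{1}$ into intervals (which, in dimension $2$, is automatically a grid-pattern of $e_2^\bot$). As in the proof of Lemma \ref{Lemma case of an empty complexity}, setwise $\QQ$-freeness together with the fact that in each tower the sum of heights equals $1$ forces the multi-set of heights to be the same in every tower.

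Fix a canonical total order on $\cF_2:=\{\lambda(\pr_2(K)) : K \in \cP\}$, say by increasing real value. For each tower $T$ I would produce a sequence of restricted shuffles in direction $2$, all with horizontal subrectangle $R_T$, sorting the rectangles of $T$ into this canonical order. Since $\mathfrak{S}_n$ is generated by adjacent transpositions, it suffices to use swaps of pairs of vertically consecutive rectangles of $T$ in the current partition, which is exactly what a restricted shuffle with subrectangle $R_T$ achieves. Crucially, after each such swap the two resulting rectangles still share the footprint $R_T$ and remain vertically consecutive, so the tower structure is preserved and the next swap is again a legitimate restricted shuffle on the current partition.

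Because a swap localized inside $T$ leaves every rectangle outside $T$ untouched, the sequences for distinct towers can be concatenated in any order. After every tower has been sorted, they all exhibit the same vertical partition $\cQ_2$ (the common multi-set of heights written in canonical order), so the resulting partition equals $D \times \cQ_2$, a grid-pattern.

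The main (mild) point to verify is the preservation of the tower structure under each adjacent swap, so that the bubble-sort argument inside a single tower indeed proceeds through restricted shuffles on the \emph{current} partition without ever needing to cut a rectangle.
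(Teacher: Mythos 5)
Your proof is correct and takes essentially the same approach the paper intends: since $\mathscr{C}(\cP)=\emptyset$ forces $\City(\cP)=\cP$, the partition splits into disjoint towers with common horizontal footprints, setwise $\QQ$-freeness equalizes the multi-set of heights across towers, and bubble-sorting each tower by adjacent vertical swaps (restricted shuffles on the current partition) yields a grid-pattern. The paper states this lemma with the proof omitted as an ``immediate change'' to Lemma \ref{Lemma case of an empty complexity}; your write-up simply makes explicit the one point that change hinges on, namely that each adjacent swap preserves the tower structure, so each successive shuffle is genuinely a restricted shuffle on the current partition and never requires cutting a piece.
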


With these two results, the proof of Theorem \ref{Theorem reordering of a setwise Q-free partition can lead to a grid-pattern} is the same as the one of Theorem \ref{Theorem the set of all restricted shuffles is a generating subset of REC} until Lemma \ref{Lemma diminution of the cardinal of the complexity}, where we proved the following refinement in dimension $2$:

\begin{Lem}
Suppose $d=2$. Let $\cP$ be a setwise $\QQ$-free partition. There exists a product $g$ of restricted shuffles in direction inside $\lbrace 1,2, \ldots, d-1 \rbrace$ such that $\cP \in \Pi_g$ and:
\[
\mathscr{C}(g(\cP)) \subset \mathscr{C}(\cP) \smallsetminus \lbrace \whei(\cP) \rbrace .
\]
Also there exists a sequence $(r_1,\ldots,r_n)$ of restricted shuffles on $\cP$ such that $g=r_n \circ \ldots \circ r_1$.
\end{Lem}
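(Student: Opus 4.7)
The plan is to upgrade the product $g$ constructed in Lemma~\ref{Lemma diminution of the cardinal of the complexity} so that it arises as a sequence of restricted shuffles on $\cP$. Recall that in dimension $d=2$, $g$ acts as $\delta \times \Id$ on the strip $\interfo{0}{1} \times \interfo{0}{\whei(\cP)}$ and as the identity above, where $\delta \in \IET$ sends $\Omega^-$ to $\Omega^+$ and is the identity on $\Omega^- \cap \Omega^+$, i.e., on the horizontal extents of the ``tall'' towers (highest towers whose tops lie strictly above $\whei(\cP)$). Choosing $\delta$ is flexible: since inside each connected component of $\Site(\cP) \subseteq \interfo{0}{1}$ the multisets of $\Omega^-$- and $\Omega^+$-interval lengths agree (by $\QQ$-freeness), I may further require that $\delta$ map each connected component of $\Site(\cP)$ to itself. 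The key structural input is then that every tower in $\Work^-(\cP)$ has total height exactly $\whei(\cP)$; since the set $\{\lambda(\pr_2(K)) : K \in \cP\}$ is $\QQ$-linearly independent, any two such towers share the same multiset of piece vertical lengths.

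The construction proceeds in three phases. In \emph{Phase 1 (sort)}, within each $\Work^-$ tower I reorder its pieces into a fixed canonical order (say ascending by vertical length, with a consistent tie-break) by a sequence of vertical restricted shuffles on the current partition, each swapping two vertically-adjacent pieces of a single column. After this phase, within each connected component of $\Site(\cP)$ the bottom strip has a grid structure, with a common sequence of rows $0 = r_0 < r_1 < \cdots < r_k = \whei(\cP)$ and columns at the $\Work^-$ interval endpoints inside that component. In \emph{Phase 2 (column permutation)}, I decompose the restriction of $\delta$ to each connected component of $\Site(\cP)$ as a sequence of restricted rotations on the current 1D partition (by Proposition~\ref{IETrr}); each such rotation swaps two horizontally-adjacent $\Work^-$ intervals $I,J$, and is realized in 2D as the $k$ horizontal restricted shuffles swapping $I \times \interfo{r_{l-1}}{r_l}$ with $J \times \interfo{r_{l-1}}{r_l}$ for $l=1,\dots,k$, each being a legitimate restricted shuffle on the then-current partition. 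In \emph{Phase 3 (unsort)}, at every new column position $\delta(I)$ I apply the inverse of the Phase 1 sort performed on the source column $I$, by vertical restricted shuffles on the current partition, so that the internal stacking at $\delta(I)$ matches the original order of column $I$ in $\cP$. The composition of the three phases coincides with $g$.

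The main technical obstacle is avoiding any swap involving a tall-tower column, whose bottom portion need not have the common $\Work^-$ multiset of vertical lengths (and indeed a tall-tower piece may straddle $\whei(\cP)$, so no row structure is available to swap against). This is handled precisely by the component-respecting choice of $\delta$ above: since $\delta$ fixes tall-tower intervals, the adjacent swaps in the bubble-sort decomposition of $\delta|_C$ for each component $C$ only ever involve $\Work^-$ intervals inside $C$. A routine check shows that each row-swap in Phase 2 preserves the grid structure at site positions, so subsequent row-swaps remain restricted shuffles on the current partition, closing the induction and yielding the refined statement.
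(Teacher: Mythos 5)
Your proposal is correct and takes essentially the same approach as the paper: choose $\delta$ to respect the connected components of $\Site(\cP)$, pre-sort the $\Work^-$ towers so that columns within a component share a common grid structure, realize the column permutation as a sequence of horizontal shuffles row by row, and then unsort. The paper's own proof only spells out the sorting step and the component-respecting choice of $\delta$, leaving the permute-and-unsort bookkeeping implicit; your three-phase decomposition, with Phase~3 unsorting at $\delta(I)$ via the inverse of the sort performed at the source column $I$, makes this verification explicit and confirms that the composite of the sequence is exactly $g$.
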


\begin{proof}
First we rearrange every tower of $\City(\cP)$ such that pieces of every tower is ordered by increasing order about their length of their $2$-projection.

We recall that $\Omega^-= \lbrace \pr_d^{\bot}(K) \mid K \in \Top(\City(\cP)) \rbrace$ and $\Omega^+= \lbrace \pr_d^{\bot}(K) \mid K \in \Work^+ \cup \Top(\City(\cP)) \smallsetminus \Work^- \rbrace$. by Lemma \ref{Lemma two partitions with Q-conditions contains the same number of pieces} we deduce that there exists $\delta \in \Rec_{d-1}$ such that $\Omega^- \in \Pi_{\delta}$ (for every element $K$ of $\Omega^-$, the restriction of $\delta$ to $K$ is a translation) and $\delta(\Omega^-)=\Omega^+$ and for every $K \in \Omega^- \cap \Omega^+$ we have $\delta(K)=K$.
The main argument is that every connected component $C$ of $\Site(\cP)=\bigsqcup\limits_{K \in \Omega^-} K$ is a left half-open interval and there exists $\Omega_C^- \subset \Omega^-$ which partitions $C$. Similarly we can define the subset $\Omega_C^+$ of $\Omega^+$ which partitions $C$. Then by $\QQ$-freeness we can also ask $\delta$ to send $\Omega_C^-$ on $\Omega_C^+$. Then we define $g_C \in \Rec_2$ such that:
\[
g_C(x)= \left\{
\begin{array}{cc}
(\delta \times \Id)(x) & \textup{if}~ \pr_2(x) < \whei(\cP) ~\textup{and}~ \pr_1(x) \in C \\
x & \textup{else}
\end{array}
\right.
\]
We can see that $g_C$ only moves towers of $\City(\cP)$. And as these towers are rearrange such that pieces of every tower is ordered by increasing order about their length of their $2$-projection. We deduce that there exists a sequence $(r_1,\ldots ,r_n)$ of restricted shuffle on $\cP$ such that $g_C=r_n \circ \ldots \circ r_1$.
Let $g$ be the product of every $g_C$ where $C$ ranges over the set of all connected components of $\Site(\cP)$. It satisfies the statement of the lemma.
\end{proof}

At this point we are unable to prove Theorem \ref{Theorem reordering of a setwise Q-free partition can lead to a grid-pattern} for arbitrary d. Here are some possible step towards a proof.

\begin{Def}
Define $(S_{\delta})$ as the following statement.
For every $R$ be finite union of rectangles in $\interfo{0}{1}^{\delta}$. Let $\cP,\cQ$ be rectangle partitions of $R$. Suppose that for each i there is a $\QQ$-free subset $F_i$ of $\mathopen{]}0,1\mathclose{[}$ such that for every $K \in \cP \cup \cQ$, we have $\lambda(\pr_i(K)) \in F_i$. Then one can change $\cQ$ into $\cP$ by a finite sequence of shuffles.
\end{Def}

Then the statement $S_{d-1}$ implies Theorem \ref{Theorem reordering of a setwise Q-free partition can lead to a grid-pattern} in dimension $d$, the argument being an immediate adaptation of the above one.

Indeed, we know that $S_1$ holds. Here $R$ is just a disjoint union of intervals, and the difficulty is that components of $R$ can have complicated shapes in general. Note that proving $(S_{\delta})$ immediately reduces to the case when $R$ is connected; however it sounds convenient not to assume $R$ connected in order to set up a proof (e.g., by induction on the number of the rectangles).

\section{Rectangle exchanges in multirectangles}\label{gen_multir}
\subsection{Generation by restricted shuffles}

Fix the dimension $d\ge 1$. Let $R=A\sqcup B$ be a multirectangle in $\RR^d$, with $A,B$ non-empty disjoint multirectangles. Let $\Gamma\{A,B\}$ (resp.\ $\Gamma(A,B)$) be the subgroup of $\Rec(R)$ preserving the partition $\{A,B\}$ (resp.\ preserving $A$ and $B$).

\begin{Lem}\label{stabmax}
The subgroup $\Gamma(\{A,B\})$ is a maximal proper subgroup of $\Rec(R)$. If $A,B$ are $\Rec$-isomorphic then the only proper subgroup strictly containing $\Gamma(A,B)$ is its overgroup of index 2 in $\Gamma\{A,B\}$ (while if they are not $\Rec$-isomorphic, then $\Gamma(A,B)=\Gamma\{A,B\}$).
\end{Lem}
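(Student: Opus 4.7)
My plan rests on a single key claim: if $f \in \Rec(R) \setminus \Gamma\{A,B\}$, then $\langle \Gamma(A,B), f \rangle = \Rec(R)$. Granted this claim, all three assertions of the lemma follow. The parenthetical statement is immediate from Lemma \ref{vl_rec}: an element of $\Gamma\{A,B\}$ swapping $A$ and $B$ restricts to a $\Rec$-isomorphism $A\to B$, forcing $\vl_d(A) = \vl_d(B)$. Maximality of $\Gamma\{A,B\}$ then follows a fortiori. For the subgroup part, when $A,B$ are $\Rec$-isomorphic any element $g_0 \in \Rec(R)$ with $g_0(A)=B$ gives $[\Gamma\{A,B\}:\Gamma(A,B)] = 2$; hence a subgroup $H$ with $\Gamma(A,B) \subsetneq H \subsetneq \Rec(R)$ cannot contain an element of $\Rec(R)\setminus \Gamma\{A,B\}$ (else $H=\Rec(R)$ by the claim), so $H \subset \Gamma\{A,B\}$ and the index-$2$ structure forces $H=\Gamma\{A,B\}$.

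To prove the key claim, fix $f \notin \Gamma\{A,B\}$ and set $H=\langle \Gamma(A,B),f\rangle$. Since $f$ neither preserves nor swaps the partition $\{A,B\}$, both $A\cap f^{-1}(A)$ and $A\cap f^{-1}(B)$ have positive tensor volume. Refining the partition associated with $f$, I pick two disjoint translation-isometric rectangles $P,P'\subset A$, each contained in a single piece of the refinement, with $f(P)\subset B$ and $f(P')\subset A$. Then $\tau_{P,P'}\in \Gamma(A,B)\subset H$, and a direct computation using that $f|_P$ and $f|_{P'}$ are translations gives
\[
f\,\tau_{P,P'}\,f^{-1} = \tau_{f(P),f(P')},
\]
a rectangle transposition crossing between $A$ and $B$. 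I will call such transpositions \emph{crossing}.

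Two subclaims then finish the argument. The first is that $H$ contains every crossing rectangle transposition. The construction above already produces crossing transpositions of many shapes: varying $P,P'$ inside the two regions $A\cap f^{-1}(B)$, $A\cap f^{-1}(A)$ (which, being multirectangles of positive measure, contain sub-rectangles of all sufficiently small shapes) realizes every small shape; conjugating by single-translation elements of $\Gamma(A,B)$ then repositions the produced transpositions arbitrarily; and the decomposition $\tau_{P_0,Q_0} = \prod_j \tau_{P_0^j,Q_0^j}$ into commuting crossing transpositions on a partition of $P_0,Q_0$ assembles crossing transpositions of arbitrarily large shape. The second subclaim is that $\Rec(R)$ is generated by $\Gamma(A,B)$ together with the set of crossing rectangle transpositions: given $h\in\Rec(R)$, a volume computation yields $\vl_d(A\cap h^{-1}(B)) = \vl_d(B\cap h^{-1}(A))$, so Lemma \ref{vl_rec} (combined with Theorem \ref{Theorem ultrasimplicially ordered group} to refine partitions) produces a translation-isometric bijection $(P_i\mapsto Q_i)$ between rectangle pieces of these two multirectangles; then $h \cdot \big(\prod_i \tau_{P_i,Q_i}\big)^{-1}$ preserves both $A$ and $B$. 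Combining both subclaims, $H = \Rec(R)$.

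The main obstacle is the first subclaim. Conjugation by $\Gamma(A,B)$ preserves the shape of a rectangle transposition, so producing crossing transpositions of every shape requires carefully exploiting the freedom in the initial choice of $P,P'$ (which yields every small shape) together with the combinatorics of decomposing a target crossing transposition into a product of commuting small pieces that each fit into the regions $A\cap f^{-1}(A)$ and $A\cap f^{-1}(B)$.
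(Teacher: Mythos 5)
Your overall plan follows the same high-level structure as the paper — reduce everything to the key claim that $\langle \Gamma(A,B),f\rangle = \Rec(R)$ for $f\notin\Gamma\{A,B\}$, produce ``crossing'' elements by conjugating a transposition in $\Gamma(A,B)$ by $f$, and use a tensor-volume argument to show that $\Gamma(A,B)$ together with crossing swaps generates $\Rec(R)$. Your second subclaim (the factorization $h = (h\cdot\prod_i \tau_{P_i,Q_i}^{-1})\cdot\prod_i\tau_{P_i,Q_i}$ with the first factor in $\Gamma(A,B)$) is essentially the paper's ``first claim'' $\Rec(R)=\Gamma(A,B)E$, just with the involutive swap $v\in E$ already decomposed into commuting crossing rectangle transpositions — a cosmetic but correct simplification (the paper would reach the same point by applying Lemma \ref{Lemma every element of order 2 is a product of transpositions}).

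Where your route genuinely differs is in the bootstrapping step. The paper fixes a \emph{single} small crossing transposition $\psi=\psi_{X,Y}$ obtained from $f$, and then shows $\langle\Gamma(A,B),\psi\rangle = \Rec(R)$ via a doubling/reassembly trick: cut the target rectangle $X'$ into halves $X_1',X_2'$, conjugate so $X_1'\subset X$ and $X_2'\cap X=\varnothing$ (same for $Y$), set $q=\psi_{X_1',X_2'}\psi_{Y_1',Y_2'}\in\Gamma(A,B)$, and observe $\sigma=(q\psi q^{-1})\psi$. You instead use the full freedom in the choice of $P,P'$ to produce, from $f$, crossing transpositions $\tau_{f(P),f(P')}$ of \emph{every} sufficiently small shape directly; after $\Gamma(A,B)$-repositioning this gives all small crossing transpositions, and assembling via a commuting-product decomposition gives all crossing transpositions. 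Your version avoids the somewhat delicate doubling identity and is conceptually cleaner; the trade-off is that you use more of the structure of $f$ (the paper only needs one $\psi$, which makes its Claim 2 a slightly stronger self-contained statement). I'll flag one point that needs care in your sketch of Subclaim 1 but doesn't undermine the argument: ``repositioning'' a crossing transposition $\tau_{f(P),f(P')}$ to an arbitrary $\tau_{Q,Q'}$ of the same shape requires $g\in\Gamma(A,B)=\Rec(A)\times\Rec(B)$ that restricts to a \emph{translation} on $f(P')$ (resp.\ $f(P)$); this is not immediate from Lemma \ref{vl_rec} alone (a $\Rec$-isomorphism need not be a single translation), but it is achievable by moving through an intermediate disjoint translate, which exists when the shape is small enough. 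You acknowledge Subclaim 1 is a sketch; with this point made precise, the argument is complete.
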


Note that $\Gamma(A,B)\simeq\Rec(A)\times\Rec(B)$.
\begin{proof}
Let $X\subset A$, $Y\subset B$ be multirectangles and $f$ an isomorphism $A\to B$. Define an involutive element $\sigma_f$ as equal to $f$ on $X$, $f^{-1}$ on $Y$, and identity elsewhere. Let $E$ be the set of all such involutive elements (for all possible $X$, $Y$, $f$). We first claim that $\Rec(R)=\Gamma(A,B)E$.

Fix $u\in\Rec(R)$. Define $A_1=A\cap u^{-1}(A)$, $A_2=A\cap u^{-1}(B)$, $B_1=B\cap u^{-1}(A)$, $B_2=B\cap u^{-1}(B)$. Clearly $A=A_1\sqcup A_2$ and $B=B_1\sqcup B_2$. Also $A=u(A_1)\sqcup u(B_1)$. Hence $\vl_d(A_2)=\vl_d(A)-\vl_d(A_1)=\vl_d(A)-\vl_d(u(A_1))=\vl_d(u(B_1))$. Hence, by Lemma \ref{vl_rec}, there is a $\Rec$-isomorphism $\xi:A_2\to u(B_1)$. Similarly, there is a $\Rec$-isomorphism $B_1\to u(A_2)$, which we still denote by $\xi$ (since $A_2$ and $B_1$ are disjoint this is harmless). Then define $w$ as equal to $u$ on $A_1\sqcup B_2$ and to $\xi$ on $A_2\cup B_1$. Then $w$ is bijective, hence belongs to $\Gamma(A,B)$. Define $v=w^{-1}\circ u$. Then $v(A_1)=A_1$, $v(B_2)=B_2$, and $v$ exchanges $A_2$ and $B_1$. Hence $v\in E$. So $u=wv\in \Gamma(A,B)E$.

Let us now improve the claim. Say that a rectangle $W$ in $A$ is small if there exists a translate of $W$ in $A$ disjoint from $W$ and sharing a face with $W$ perpendicular to the first coordinate; similarly define a small rectangle in $B$. Let $E'$ be the set of elements $f$ of $E$ exchanging one nonempty small rectangle $X$ of $A$ and a small rectangle $Y$ of $B$ through a translation (thus note $f=\psi_{X,Y}$).

The second claim is that for every $\psi\in E'$ we have $\Rec(R)=\langle \Gamma(A,B),\psi\}$. By the first claim, it is enough to prove that every $\sigma\in E$, we have $\sigma\in\langle \Gamma(A,B),\sigma\}$. Decomposing $\sigma$ into a product with disjoint support, we can suppose that $\sigma\in E'$, say $\sigma=\psi_{X',Y'}$. Cut the rectangle $X'$ along the first direction into two isomorphic rectangles $X'_1$, $X'_2$. After conjugating by an element of $\Gamma(A,B)$ (and possibly exchanging the names of $X'_1$ and $X'_2$), we can suppose that $X'_1\subset X$ and $X'_2\cap X=\emptyset$. Similarly, conjugating by an element of $\Gamma(A,B)$, we can suppose that $Y'_1\subset Y$ and $Y'_2\cap Y=\emptyset$. Define $q=\psi_{X'_1,X'_2}\psi_{Y'_1,Y'_2}\in \Gamma(A,B)$. Then $\sigma=(q\psi q^{-1})\psi\in\langle \Gamma(A,B),\psi\rangle$ and the second claim is proved.

Now, to prove the lemma, we have to prove that $\langle \Gamma(A,B), f\rangle=\Rec(R)$ for every $f\notin \Gamma\{A,B\}$. Indeed, up to switch $A$ and $B$, we have $A\cap f^{-1}(A)$ and $A\cap f^{-1}(B)$ non-empty. Choose nonempty rectangles $U,V$ in these two subsets, translate of each other, on each of which $f$ is a translation. Choosing $U,V$ small enough, we can ensure that $f(U)$ and $f(V)$ are small in $A$ and $B$ respectively. Define $\psi_{U,V}\in \Gamma(A,B)$. Then $f\tau f^{-1}=\psi_{f(U),f(V)}\in\langle \Gamma(A,B),f\rangle$. By the second claim, we deduce that $\langle \Gamma(A,B),f\rangle=\Rec(R)$.\end{proof}

\begin{Rem}
Keeping Corollary \ref{derin} in mind, the above proof also shows the same statement in restriction to the derived subgroup: if $f\in D(\Rec(R))\smallsetminus \Gamma\{A,B\}$ then $\langle D(\Rec(A)),D(\Rec(B)),f\rangle=D(\Rec(R))$.
\end{Rem}

\begin{Coro}\label{gen_two_multi}
For every multirectangle $M$ written as union $M=M_1\cup M_2$ of two non-disjoint multirectangles $M_1,M_2$, we have $\langle\Rec(M_1),\Rec(M_2)\rangle=\Rec(M)$.
\end{Coro}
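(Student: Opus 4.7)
The plan is to reduce to Lemma \ref{stabmax} by realising that $\Rec(M_1)$ and $\Rec(M_2)$ together contain enough material to generate $\Rec(M)$, namely the product $\Gamma(M_1,B)$ (where $B:=M_2\smallsetminus M_1$) together with an element that fails to preserve the partition $\{M_1,B\}$.

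We may assume neither $M_1$ nor $M_2$ contains the other, since otherwise the conclusion is immediate. Then $B:=M_2\smallsetminus M_1$ and $C:=M_1\cap M_2$ are non-empty multirectangles, and $M=M_1\sqcup B$. Set $G:=\langle \Rec(M_1),\Rec(M_2)\rangle$. The first step is to verify that $G$ contains $\Gamma(M_1,B)=\Rec(M_1)\times\Rec(B)$: by definition $\Rec(M_1)\subseteq G$, and any element of $\Rec(B)$ acts as the identity on $C=M_2\smallsetminus B$, hence sits inside $\Rec(M_2)\subseteq G$.

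The second step is to exhibit in $G$ an element that does not preserve the partition $\{M_1,B\}$. For this, choose a small rectangle inside $C$ and a small rectangle inside $B$; by shrinking the first so that a translate of it fits inside the second, we obtain translation-isometric rectangles $X\subseteq C$ and $Y\subseteq B$. The rectangle transposition $\tau_{X,Y}$ is supported in $C\cup B\subseteq M_2$, hence belongs to $\Rec(M_2)\subseteq G$, and it sends $X\subseteq M_1$ onto $Y\subseteq B$, so it does not preserve $M_1$; that is, $\tau_{X,Y}\notin\Gamma\{M_1,B\}$.

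The proof of Lemma \ref{stabmax} actually establishes the sharper statement that $\langle\Gamma(M_1,B),f\rangle=\Rec(M)$ for every $f\notin\Gamma\{M_1,B\}$; applying this with $f=\tau_{X,Y}$ gives $\Rec(M)\subseteq G$, completing the proof. There is no real obstacle here beyond checking that Lemma \ref{stabmax} can be invoked as a black box with the decomposition $M=M_1\sqcup B$; the bulk of the argument (producing a rectangle transposition across $M_1$ and $B$ from inside $\Rec(M_2)$) only uses that $C$ and $B$ are non-empty multirectangles.
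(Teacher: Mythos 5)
Your proof is correct and follows essentially the same approach as the paper's: decompose $M$ as the disjoint union $M_1\sqcup B$ with $B=M_2\smallsetminus M_1$, observe that a rectangle transposition swapping pieces of $M_1\cap M_2$ and $B$ lies in $\Rec(M_2)$ but not in $\Gamma\{M_1,B\}$, and then apply Lemma \ref{stabmax}. You argue directly using the intermediate claim from inside the proof of Lemma \ref{stabmax} (that $\langle\Gamma(A,B),f\rangle=\Rec(R)$ for any $f\notin\Gamma\{A,B\}$), whereas the paper argues by contradiction from the maximality statement; your version has the small merit of explicitly checking $\Gamma(M_1,B)\subseteq G$, a step the paper uses tacitly.
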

\begin{proof}
Let $H$ be the subgroup generated by $\Rec(M_1)\cup\Rec(M_2)$. Write $M_3=M_2\smallsetminus M_1$. If $M_1$ or $M_3$ is empty the conclusion is trivial, hence suppose otherwise. So $M$ is the disjoint union $M_1\sqcup M_3$. If by contradiction $H\neq\Rec(M)$, by Lemma \ref{stabmax} we have $H\subset\Gamma\{M_1,M_3\}$. Let $C$ be a rectangle strictly contained in $M_1\cap M_2$ with a translate $C'$ strictly contained in $M_3$, and let $f$ be the rectangle transposition exchanging $C$ and $C'$. Then $f\in\Rec(M_2)$, so $f\in H$. But clearly $f\notin\Gamma\{M_1,M_3\}$. We obtain a contradiction.
\end{proof}

\begin{Coro}\label{gen_shuffles_multirec}
For every multirectangle $M$ with connected interior, $\Rec(M)$ is generated by restricted shuffles. 
\end{Coro}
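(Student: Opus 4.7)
The plan is to induct on the number $n$ of rectangles in an interior-disjoint partition $M=R_1\cup\cdots\cup R_n$ of $M$. For the base case $n=1$, $M$ is a single rectangle, and an affine bijection $M\to\interfo{0}{1}^d$ identifies $\Rec(M)$ with $\Rec_d$ while sending restricted shuffles to restricted shuffles, so the conclusion is exactly Theorem~\ref{rec_shu_gen}.

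For the inductive step, I would first observe that connectedness of $\mathrm{int}(M)$ is equivalent to connectedness of the \emph{adjacency graph} on $\{R_1,\dots,R_n\}$, whose edges join pairs of rectangles whose closures share a $(d-1)$-dimensional face. Picking a spanning tree and a leaf, up to reindexing I may assume $R_n$ is a leaf. Then $M':=R_1\cup\cdots\cup R_{n-1}$ still has connected interior, so by induction $\Rec(M')$ is generated by restricted shuffles, which extend by the identity on $R_n$ to restricted shuffles of $M$. Let $R_j$ ($j<n$) be the neighbor of $R_n$ in the tree and let $F$ denote their shared $(d-1)$-dimensional face. I would then choose a small ``bridge'' rectangle $S\subseteq R_n\cup R_j$ straddling $F$---concretely, a thin half-open slab transverse to $F$ whose cross-section is a sub-rectangle of $F$---so that $S=S_1\sqcup S_2$ with $S_1:=S\cap R_n$ and $S_2:=S\cap R_j$ both non-empty rectangles.

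It then suffices to apply Corollary~\ref{gen_two_multi} twice; this is the key move, which bypasses the fact that $R_n$ and $M'$ only touch on a set of measure zero. First, writing $M=M'\cup(R_n\cup S_2)$, the intersection contains $S_2\neq\emptyset$, so the corollary gives $\Rec(M)=\langle\Rec(M'),\Rec(R_n\cup S_2)\rangle$. Second, writing $R_n\cup S_2=R_n\cup S$ (using $S_1\subseteq R_n$), the intersection $R_n\cap S=S_1$ is non-empty, so the corollary gives $\Rec(R_n\cup S_2)=\langle\Rec(R_n),\Rec(S)\rangle$. Since $R_n$ and $S$ are single rectangles, the base case gives that $\Rec(R_n)$ and $\Rec(S)$ are generated by restricted shuffles, and all of these extend by the identity to restricted shuffles of $M$; combined with $\Rec(M')$ this yields the result. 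The step requiring the most care---though it is a direct consequence of the left-closed right-open convention---is the bridge construction itself: one must verify that, regardless of which side of $F$ carries the closed boundary, a sufficiently thin transverse slab is indeed a rectangle contained in $R_n\cup R_j$ and meets each of $R_n$ and $R_j$ in a non-empty rectangle.
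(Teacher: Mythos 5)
Your argument is correct and takes essentially the paper's route: iterated applications of Corollary~\ref{gen_two_multi} through small bridge rectangles straddling a shared $(d-1)$-face, organized by peeling leaves off a spanning tree of the codimension-one adjacency graph rather than by the paper's choice of an ordering of the $R_k$ in which each partial union has connected interior (the two bookkeeping devices amount to the same thing). The one assertion deserving a word more than you give it is the equivalence between connectedness of $\mathrm{int}(M)$ and connectedness of the adjacency graph --- the nontrivial direction requires observing that a finite union of affine subsets of codimension $\geq 2$ cannot disconnect a connected open subset of $\RR^d$ --- and this is in fact the less routine step, rather than the bridge construction you flag as delicate, which is indeed immediate from the half-open convention.
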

\begin{proof}
We first assume that we can write $M$ as a finite disjoint union of rectangles $R_1\sqcup\dots \sqcup R_n$, where for each $k\ge 2$, the multirectangle $(R_1\sqcup\dots\sqcup R_{k-1})\cap R_k$ is non-empty. Then the result follows from Corollary \ref{gen_two_multi} (and the case $n=1$) by an immediate induction on $n$.

To show that we can write $M$ in this way, we can write $M$ as a finite disjoint union of rectangles $R_1\sqcup\dots \sqcup R_n$, where for each $k$, $R_1\sqcup\dots\sqcup R_k$ has a connected interior. For each $k$ with $2\le k\le n$, let $C_k\subset R$ be a rectangle such that $C_k$ meets both $R'=R_1\cup\dots R_{k-1}$ and $R_k$ (see Figure \ref{indrk}).
\begin{figure}[h]
\setlength{\unitlength}{6mm} 
\centering
\begin{picture}(10,5) 
 \put(0,0){\line(0,1){2}}
 \put(0,2){\line(1,0){2}}
 \put(0,0){\line(1,0){10}}
 \put(2,1){\line(1,0){5}}
 \put(2,1){\line(0,1){1}}
% \put(7,1){\line(0,1){4}}
 \put(7,1){\line(0,1){.9}}
 \put(7,2.1){\line(0,1){.8}}
 \put(7,3.1){\line(0,1){.9}}
 \put(7,3.6){\line(1,0){2}}
 \put(9,2){\line(0,1){1.6}}
 \put(9,2){\line(1,0){1}}
 \put(10,0){\line(0,1){2}}
 \put(3,1){\line(0,1){3}}
 \put(3,4){\line(1,0){4}}
 \put(5,2){\line(1,0){3}}
 \put(5,2){\line(0,1){1}}
 \put(5,3){\line(1,0){3}}
 \put(8,2){\line(0,1){1}}
 \put(.7,.8) {$R'$}
 \put(3.3,3) {$R_k$}
 \put(5.3,2.3) {$C_k$}
\end{picture}
\caption{$R'$, $R_k$ and $C_k$} 
\label{indrk} 
\end{figure}
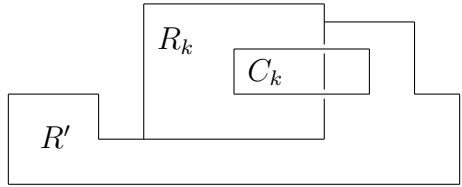
Then $R=R_1\cup C_2\cup R_2\cup\dots C_n\cup R_n$ is a description satisfying the previous requirement.
\end{proof}

\begin{Rem}
The connectedness assumption is necessary: in general let $U_1,\dots,U_n$ the connected components of the interior of $R$. Each $U_i$ is the interior of some multirectangle $R_i$, and $R$ is the disjoint union of the $R_i$; then every restricted shuffle preserves each $U_i$. Actually it follows from Corollary \ref{gen_shuffles_multirec} that the subgroup generated by restricted shuffles equals $\prod_i\Rec(R_i)$, the component-wise stabilizer of the decomposition $R=\bigsqcup_{i=1}^nR_i$.
\end{Rem}

\subsection{Rectangle exchange transformations in tori}\label{rec_torus}

Let $\Lambda$ be a lattice in $\RR^d$. We can define a rectangle in the torus $\RR^d/\Lambda$ as the image of a rectangle in $\RR^d$, and define accordingly $\Rec_d(\RR^d/\Lambda)$.

\begin{Prop}
For every lattice $\Lambda$ in $\RR^d$ there exists a multirectangle $M$ that is a fundamental domain for $\Lambda$, in the sense that $\RR^d$ is the disjoint union of all $M+\omega$ for $\omega$ ranging over $\Lambda$.
\end{Prop}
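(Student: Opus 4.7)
The approach is a ``cover-and-difference'' construction on the compact torus $T := \RR^d/\Lambda$: I will cover $T$ by finitely many images of small axis-aligned rectangles, take successive differences to partition $T$, then lift each piece back to $\RR^d$ as a multirectangle. No induction on $d$ and no further structural input on $\Lambda$ is required.

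First, by discreteness of $\Lambda$ I pick $\eps > 0$ with $\Lambda \cap (-\eps,\eps)^d = \{0\}$, so that $R_0 := \interfo{0}{\eps}^d$ injects into $T$. Compactness of $T$, together with the open cover of $T$ by translates of the interior of $R_0$, produces finitely many $R_k := R_0 + t_k$ ($1 \le k \le n$, with $t_k \in \RR^d$ chosen in a bounded region) whose projections cover $T$; each $R_k$ still injects into $T$ because $R_0$ does. Writing $\pi : \RR^d \to T$ for the quotient map, I set
\[
S_k := R_k \setminus \bigcup_{\substack{j < k \\ \omega \in \Lambda}} (R_j + \omega).
\]
Boundedness of $R_k$ forces only finitely many $\omega \in \Lambda$ to contribute, so $S_k$ is an axis-aligned rectangle minus a finite union of axis-aligned rectangles. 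Iterating the elementary observation that a single ``box minus box'' decomposes into at most $3^d - 1$ axis-aligned rectangles (cut along the faces of the subtracted box), each $S_k$ is a multirectangle.

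Finally, let $M := \bigcup_{k=1}^n S_k$. If $x \in S_k \cap S_{k'}$ with $k < k'$, then $x \in R_k$ yields $\pi(x) \in \pi(R_k)$, contradicting the removal in the definition of $S_{k'}$; hence the $S_k$ are pairwise disjoint and $M$ is a multirectangle. For the fundamental-domain property: given $\bar x \in T$, take the smallest $k$ with $\bar x \in \pi(R_k)$, so $\bar x \notin \pi(R_j)$ for $j < k$; the unique lift of $\bar x$ inside $R_k$ lies in $S_k \subset M$, giving surjectivity onto $T$. For uniqueness, if $x, y \in M$ satisfy $\pi(x) = \pi(y)$, the same disjointness argument forces $x, y \in S_k$ for a common $k$, and the injectivity of $R_k \hookrightarrow T$ then gives $x = y$. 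The only step worth pausing on is the elementary box-minus-box decomposition; I expect this finite combinatorial bookkeeping to be the main (and minor) technical obstacle, while the rest of the argument is essentially automatic once the covering is in place.
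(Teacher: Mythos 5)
Your proof is correct, but it takes a genuinely different route from the paper's. The paper uses a single (generally non-injective) rectangle $R$ with $R + \Lambda = \RR^d$, fixes a translation-invariant total order on $\Lambda$, and defines $M = \{x \in R : \forall \omega \in \Lambda_+,\; x - \omega \notin R\}$; the subtraction is finite because $R$ is bounded and $\Lambda$ is discrete. You instead use compactness of $T = \RR^d/\Lambda$ to cover it by finitely many small translates $R_1, \dots, R_n$ of a rectangle small enough to inject into $T$, then perform a greedy subtraction indexed by $k$. Both are ordered-subtraction arguments: the paper orders the lattice, you order a finite cover. The paper's version is shorter and needs no compactness (discreteness of $\Lambda$ already suffices), at the cost of choosing an invariant total order on $\Lambda$; yours sidesteps that choice and makes injectivity of each covering piece $R_k$ automatic by construction, at the cost of the finite-subcover step and slightly more box-minus-box bookkeeping across several rectangles. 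Your $3^d - 1$ decomposition bound and your disjointness and surjectivity checks are all sound.
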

\begin{proof}
Let $R$ be a rectangle such that $R+\Lambda=\RR^d$. Choose an invariant total ordering on $\Lambda$ and write $\Lambda_+=\{\omega\in\Lambda:\omega>0\}$. Define $M=\{x\in R:\forall\omega\in\Lambda_+:x-\omega\notin R\}$. We claim that $M$ is a multirectangle, and is a fundamental domain. First observe that $M=R\smallsetminus \left(\bigcup_{\omega\in\Lambda_+} R\smallsetminus (R+\omega)\right)$. And indeed since $R$ is bounded and $\Lambda$ acts properly on $\RR^d$, this union is a finite union. Hence $M$ is a multirectangle. If $\omega\in\Lambda_+$ then $M\cap (M+\omega)$ is empty. Since the order is total, it follows that the $\Lambda$-translates of $M$ are pairwise disjoint. Finally, for $x\in\RR^d$, the set of $\omega\in\Lambda$ such that $x+\omega\in R$ is nonempty and finite. Let $\omega$ be its maximum. Then we see that $x+\omega\in M$. Hence $M+\Lambda=\RR^d$.
\end{proof}

Hence, in a sense, the $\Rec_d$ of tori are particular cases of $\Rec_d$ of multirectangles. However, it can be useful to see them as tori. For instance, if we choose $\Lambda$ such that the group of diagonal matrices preserving $\Lambda$ is infinite (this occurs for some $\Lambda$, but not for $\Lambda=\ZZ^2$), we obtain somewhat exotic automorphisms of $\Rec_d(\RR^d/\Lambda)$.

\begin{Rem}For any lattice $\Lambda$, one can define $\vl_d(\RR^d/\Lambda)$ as $\vl_d(M)$ for some fundamental domain $M$ as above: this does not depend on $M$.
For the lattice $\Lambda$ with basis $((a,b),(c,d))$ (for real numbers $a,b,c,d$, with $ad-bc>0$) we have observed experimentally that $\vl_2(\RR^2/\Lambda)$ equals $a\otimes d-c\otimes b$.
\end{Rem}

\section{The derived subgroup}\label{Section The derived subgroup}

Fix a nonempty multirectangle $M$ in $\RR^d$. Let $\Transpo_d(M)$ be the subset of all rectangle transpositions in $\Rec_d(M)$. In this section, we prove that $\Transpo_d(M)$ is a generating subset of $D(\Rec_d(M))$, and that the latter is a simple group.

We start with some preliminary observations.

\begin{Lem}\label{Lemma every element of order 2 is a product of transpositions}
Every element of order 2 in $\Rec_d(M)$ is a product of rectangle transpositions with pairwise disjoint support.
\end{Lem}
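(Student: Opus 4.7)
Let $f\in \Rec_d(M)$ satisfy $f^2=\mathrm{id}$. The strategy is to exhibit a rectangle partition $\mathcal{P}$ of $M$ that is simultaneously associated with $f$ \emph{and} stable under $f$ (as a set of rectangles); the induced involution on $\mathcal{P}$ will then decompose into fixed points and 2-cycles, and those will directly give the desired product of disjoint rectangle transpositions.

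First I would pick any partition $\mathcal{P}_0 \in \Pi_f$ and form its common refinement with its image, namely
\[
\mathcal{P} \;=\; \bigl\{K\cap f(L):K,L\in\mathcal{P}_0,\;K\cap f(L)\neq\emptyset\bigr\}.
\]
Since an intersection of two rectangles is a rectangle (or empty), $\mathcal{P}$ is a rectangle partition of $M$; because $\mathcal{P}$ refines $\mathcal{P}_0$, it still belongs to $\Pi_f$. The key observation is that $\mathcal{P}$ is preserved by $f$: for $K,L\in\mathcal{P}_0$, using $f^2=\mathrm{id}$,
\[
f\bigl(K\cap f(L)\bigr) \;=\; f(K)\cap L,
\]
and as $(K,L)$ runs over $\mathcal{P}_0\times\mathcal{P}_0$, the pairs $(f(K),L)$ also run over $f(\mathcal{P}_0)\times\mathcal{P}_0$. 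Hence $f(\mathcal{P})=\mathcal{P}$ as a set of rectangles, and $f$ induces a set-theoretic involution $\sigma$ on $\mathcal{P}$.

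Finally, decompose $\sigma$ into its orbits. If $\{K,K'\}$ is a 2-orbit with $K'=f(K)\neq K$, then $f$ restricted to $K$ is a translation sending $K$ to $K'$, so $f$ coincides on $K\cup K'$ with the rectangle transposition $\tau_{K,K'}$. If $K$ is fixed by $\sigma$, then $f|_K$ is a translation mapping the bounded rectangle $K$ onto itself, which forces $f|_K=\mathrm{id}$. Collecting one representative from each 2-orbit yields a finite family of rectangle transpositions with pairwise disjoint supports whose product equals $f$. The only substantive point in the argument is the identity $f\bigl(K\cap f(L)\bigr) = f(K)\cap L$, which uses $f^2=\mathrm{id}$ crucially; everything else is bookkeeping.
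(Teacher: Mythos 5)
Your proof is correct. The verifications all hold: the common refinement $\mathcal{P}=\{K\cap f(L)\}$ is indeed a rectangle partition (pairwise disjointness and coverage are immediate, and $f(L)$ is a rectangle since $f$ translates $L$); it refines $\mathcal{P}_0$ so still belongs to $\Pi_f$; the identity $f(K\cap f(L))=f(K)\cap f^2(L)=f(K)\cap L$ holds because $f$ is a bijection and $f^2=\mathrm{id}$; and $f(K)\cap L=L\cap f(K)$ is again an element of $\mathcal{P}$, so $\mathcal{P}$ is $f$-stable. Fixed rectangles are translated onto themselves (hence fixed pointwise), and 2-cycles give rectangle transpositions with pairwise disjoint supports.

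Your route differs from the paper's in the bookkeeping device. The paper fibers $M$ by translation vector, setting $X_v=\{x:f(x)-x=v\}$, notes that $f$ swaps $X_v$ with $X_{-v}$, chooses a ``positive half'' $V_+$ of $\RR^d\smallsetminus\{0\}$ to avoid double-counting, partitions each $X_v$ ($v\in V_+$) into rectangles, and pairs each such $K$ with $f(K)$. Your approach instead produces an $f$-invariant rectangle partition directly as the common refinement of an associated partition $\mathcal{P}_0$ with its image $f(\mathcal{P}_0)$, and reads the transpositions off the 2-cycles of the induced involution. Both constructions of an $f$-invariant partition are equally short; yours is a little more self-contained (it never needs the observation that each $X_v$ is a multirectangle, and replaces the arbitrary choice of $V_+$ by the equally arbitrary choice of one rectangle per 2-orbit). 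The paper's version is closer in spirit to the ``translation cocycle'' viewpoint used elsewhere in the paper (e.g.\ in the definition of the SAF homomorphism), which is presumably why the authors phrase it that way.
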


\begin{proof}
Let $f\in \Rec_d$ be an element of order 2. For $v \in \RR^d$, define $X_v=\lbrace x:f(x)-x=v \rbrace$. Note that $X_v \cup X_{-v}$ is $f$-invariant. Choose a subset $V_+$ of $\RR^d$ of elements called ``positive elements'', such that $\RR^d$ is the disjoint union $V_+\sqcup -V_+ \sqcup \lbrace 0 \rbrace$. For $v$ positive, choose a finite partition $\mathcal{W}_v$ of $X_v$ into rectangles, and let $\mathcal{W}$ be the union, for $v$ positive, of all $\mathcal{W}_v$. Then $f$ is the (disjoint support) product of all $\tau_{K,f(K)}$ for $K$ ranging over $\mathcal{W}$.
\end{proof}

\begin{Lem}
\begin{enumerate}[(a)]
\item\label{drec_a} $\Transpo_d(M) \subset D(\Rec_d(M))$.
\item\label{drec_b} If $D(\Rec_d)$ is simple then it is generated by $\Transpo_d$.
\end{enumerate}
\end{Lem}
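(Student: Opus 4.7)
The plan is to prove (a) by writing each rectangle transposition as a single commutator in $\Rec_d(M)$, and to prove (b) by a short normality argument.

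For part (a), my plan is to imitate the classical symmetric-group identity $(a\,b)(c\,d)=[(a\,b\,c),(a\,b\,d)]$, valid in every $\mathfrak{S}_n$ with $n\ge 4$. Given a rectangle transposition $\tau_{P,Q}\in\Transpo_d(M)$ with $Q=P+v$, I would pick a vector $w$ splitting $P=P_1\sqcup P_2$ with $P_2=P_1+w$, and set $Q_i=P_i+v$; then $\tau_{P,Q}$ is the disjoint product $\tau_{P_1,Q_1}\tau_{P_2,Q_2}$, which under the identification $(P_1,Q_1,P_2,Q_2)\leftrightarrow(a,b,c,d)$ is exactly the double transposition $(a\,b)(c\,d)$. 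The corresponding 3-cycles $\alpha=(P_1\ Q_1\ P_2)$ and $\beta=(P_1\ Q_1\ Q_2)$ each permute three pairwise translation-isometric rectangles by translations whose vectors sum to zero along the cycle, so they are genuine elements of $\Rec_d(M)$ supported in $P\cup Q$. A direct computation (or invoking the symmetric-group identity on the four pieces) then gives $[\alpha,\beta]=\tau_{P,Q}$, so $\tau_{P,Q}\in D(\Rec_d(M))$.

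For part (b), let $N=\langle\Transpo_d\rangle$. By (a) one has $N\subseteq D(\Rec_d)$, and clearly $N\neq\{1\}$. The central observation is that $N$ is already normal in all of $\Rec_d$: given $g\in\Rec_d$ and $\tau_{P,Q}\in\Transpo_d$ with $Q=P+v$, I would choose a rectangle partition $P=\bigsqcup_i P_i$ such that $g$ acts by a translation on every $P_i$ and on every $Q_i:=P_i+v$; then $g\tau_{P,Q}g^{-1}=\prod_i\tau_{g(P_i),g(Q_i)}$ is a product of rectangle transpositions, hence lies in $N$. Thus $N$ is a nontrivial normal subgroup of $D(\Rec_d)$, and simplicity of $D(\Rec_d)$ forces $N=D(\Rec_d)$.

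The only nonroutine piece is the verification in (a) that $\alpha,\beta$ are well-defined elements of $\Rec_d(M)$ and that their commutator equals $\tau_{P,Q}$; both amount to tracking the translation vectors along each cycle, and I foresee no serious obstacle beyond this bookkeeping.
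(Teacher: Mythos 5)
Your proof is correct, but it diverges from the paper's in both parts, and the differences are worth noting. For (a), you realize $\tau_{P,Q}$ as a commutator $[\alpha,\beta]$ of two $3$-cycles via the $\mathfrak{S}_4$ identity $(a\,b)(c\,d)=[(a\,b\,c),(a\,b\,d)]$; the paper instead sets $f_1=\tau_{P_1,Q_1}$ and $f_2=\tau_{P_1,P_2}\tau_{Q_1,Q_2}$ and checks $[f_1,f_2]=\tau_{P,Q}$ (both $f_1,f_2$ being involutions, so that $[f_1,f_2]=(f_1f_2)^2$). The two computations use the same halving $P=P_1\sqcup P_2$, $Q=Q_1\sqcup Q_2$ and are of comparable length; the paper's has the small aesthetic advantage that $f_1$ and $f_2$ are themselves products of rectangle transpositions, which meshes with the surrounding development, whereas your $\alpha,\beta$ are genuinely new elements (though perfectly legitimate ones, and your check that the translation vectors along each $3$-cycle close up is the right thing to verify). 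For (b), you establish normality of $N=\langle\Transpo_d\rangle$ by a direct conjugation computation — refining $P$ so that $g$ is a translation on each $P_i$ and each $Q_i=P_i+v$, and observing $g\tau_{P,Q}g^{-1}=\prod_i\tau_{g(P_i),g(Q_i)}$. The paper instead invokes the immediately preceding lemma (every element of order $2$ in $\Rec_d$ is a product of rectangle transpositions with disjoint supports) to identify $N$ with the subgroup generated by all involutions, which is tautologically normal since the set of involutions is conjugation-invariant. Your route is more self-contained and elementary; the paper's is shorter given that the involution lemma has already been proved, and it emphasizes the structural fact that $N$ is the subgroup generated by involutions. Both then conclude identically from simplicity of $D(\Rec_d)$.
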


\begin{proof}
(\ref{drec_a}) Let $f \in \Transpo_d$ and $P$ and $R$ be the two rectangles switched by $f$. We can decompose $P=P_1\sqcup P_2$ such that $P_1$ and $P_2$ are translation-isometric. Let $f_1$ be the element that switches $P_1$ with $f(P_1)$ and let $f_2$ be the element that switches $P_1$ with $P_2$ and $f(P_1)$ with $f(P_2)$. Then we have $f=[f_1,f_2]$.

(\ref{drec_b}) From Lemma \ref{Lemma every element of order 2 is a product of transpositions}, it follows that the subgroup $N$ generated by $\Transpo_d(M)$ coincides with the subgroup generated by elements of order $2$. By (a), $N\subset D(\Rec_d)$. Hence, if $D(\Rec_d)$ is simple, it follows that $N=D(\Rec_d)$.
\end{proof}

For $d=1$, simplicity of $D(\Rec_d)=D(\IET)$ was proved by Sah \cite{Sah} and it follows that $D(\Rec_1)$ is generated by rectangle transpositions. Vorobets \cite{Vorobets2011} more recently reproved simplicity of $D(\Rec_1)$, by first proving that it is generated by transpositions. Our approach for arbitrary $d \geq 1$ is inspired by the latter.

\begin{Def}
For every $\eps>0$ we define $\Transpo_d^{\eps}(M)$ as the set of all rectangle transpositions $\tau_{K,L}$ such that each of $K$, $L$ is contained in a square of length $\eps$ contained in $M$.
\end{Def}

\begin{Prop}\label{Proposition Generating subsets of D(REC)}
\begin{enumerate}[(a)]
\item\label{gendrec_a} The subset $\Transpo_d(M)$ generates $D(\Rec_d(M))$.
\item\label{gendrec_b} For every $\eps >0$, the subset $\Transpo_d^{\eps}(M)$ generates $D(\Rec_d(M))$.
\item\label{gendrec_c} For every nonempty multirectangle $U\subset M$, the group $D(\Rec_d(M))$ is normally generated by $\Transpo_d(U)$.
\end{enumerate}
\end{Prop}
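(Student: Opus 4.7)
Let $N=\langle \Transpo_d(M)\rangle$. The inclusion $N\subseteq D(\Rec_d(M))$ is already recorded in the preceding lemma, so the substance of (a) is the reverse inclusion; my plan is to prove that $\Rec_d(M)/N$ is abelian. Normality of $N$ is immediate from Lemma \ref{Lemma every element of order 2 is a product of transpositions}: any conjugate $f\tau f^{-1}$ of a rectangle transposition is again an involution, hence a product of rectangle transpositions. Since $\Rec_d(M)$ is generated by restricted shuffles (Corollary \ref{gen_shuffles_multirec}), the quotient is generated by the images of these shuffles, so it suffices to check $[\sigma,\sigma']\in N$ for any two restricted shuffles $\sigma,\sigma'$. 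When their supports are disjoint, the commutator is trivial. Otherwise I would take a common setwise $\QQ$-free grid-pattern $\cP$ associated with both $\sigma$ and $\sigma'$ (Proposition \ref{Proposition We can always refine a grid-pattern into a setwise Q-free grid-pattern}), pass to a suitable iterated refinement on which $[\sigma,\sigma']$ permutes its pieces, and invoke Lemma \ref{Lemma two partitions with Q-conditions contains the same number of pieces} to pair the pieces into translation-isometric classes; orbit decomposition of this finite permutation then writes $[\sigma,\sigma']$ as a product of rectangle transpositions. I expect this combinatorial step to be the main obstacle, since it must handle both same-direction and cross-direction restricted shuffles uniformly.

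For (b), with (a) in hand it is enough to express each rectangle transposition $\tau_{P,Q}$ as a product of $\eps$-small ones. I would cut $P$ into finitely many small subrectangles $P_i$, inducing a compatible cut $Q=\bigsqcup_i Q_i$ via the translation carrying $P$ to $Q$, small enough that each pair $P_i,Q_i$ fits into an $\eps$-cube contained in $M$. Such $\eps$-cubes exist near every interior point of $M$ because $M$ is a multirectangle with nonempty interior; making the $P_i$ small enough to sit inside a single rectangle of a decomposition of $M$ guarantees this. Then $\tau_{P,Q}=\prod_i\tau_{P_i,Q_i}$ is a product of pairwise commuting elements of $\Transpo_d^\eps(M)$.

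For (c), by (b) it suffices to show that each sufficiently small rectangle transposition $\tau_{P,Q}$ is a $\Rec_d(M)$-conjugate of an element of $\Transpo_d(U)$. Since $U$ is nonempty, I would choose in $U$ two disjoint translation-isometric rectangles $X,Y$ having the same shape as $P$; this is possible once $P$ is chosen sufficiently small. Then $\vl_d(P)=\vl_d(X)$ and $\vl_d(Q)=\vl_d(Y)$, so by Proposition \ref{vol_orb} (applied first to the pair $(P,X)$ to get $g\in\Rec_d(M)$ with $g(P)=X$, then to $(g(Q),Y)$ inside $\Rec_d(M\smallsetminus X)$ to get $h\in\Rec_d(M\smallsetminus X)$ with $h(g(Q))=Y$, and extending $h$ by the identity on $X$) there exists $f=h\circ g\in\Rec_d(M)$ with $f(P)=X$ and $f(Q)=Y$ simultaneously. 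The rectangle transposition $\tau_{X,Y}$ lies in $\Transpo_d(U)$ and satisfies $\tau_{P,Q}=f^{-1}\tau_{X,Y}f$, giving the required conjugation. The only delicate point is the simultaneous matching $P\mapsto X$ and $Q\mapsto Y$, which is resolved by the two-step application of Proposition \ref{vol_orb} just described.
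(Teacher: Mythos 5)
Your treatment of parts (b) and (c) matches the paper's argument in substance, and the two-step application of Lemma \ref{vl_rec}/Proposition \ref{vol_orb} in (c) (first moving $P$ to $X$, then $g(Q)$ to $Y$ inside $\Rec_d(M\smallsetminus X)$) usefully fills in the conjugation detail that the paper leaves implicit with ``whence the result.''

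Part (a), however, has a genuine gap at the step you yourself flag as the main obstacle. You propose to find an iterated refinement of a common setwise $\QQ$-free grid on which $[\sigma,\sigma']$ \emph{permutes} the pieces, and then decompose this finite permutation into orbits. But a rectangle exchange permuting the pieces of a finite partition necessarily has finite order, while commutators of restricted shuffles are typically of infinite order (already in dimension $1$ a commutator of two restricted rotations is generically aperiodic). So no such refinement exists in general, and Lemma \ref{Lemma two partitions with Q-conditions contains the same number of pieces} cannot rescue this: it produces a shape-preserving bijection between a departure partition $\cP$ and an arrival partition $f(\cP)$, but that bijection is not the restriction of $f$ and does not make $f$ into a permutation of anything. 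Moreover, your instinct to handle ``same-direction and cross-direction restricted shuffles uniformly'' is precisely the wrong impulse here: the paper's proof makes an essential case distinction on the directions $i,j$. When $i=j$, the commutator $[\sigma_{R,s,i},\sigma_{R',s',i}]$ is identity off $R\cap R'$ and acts in the $i$-coordinate as the $\IET$-commutator $[s,s']$, so the claim reduces to the known $d=1$ fact that $D(\IET)$ is generated by transpositions. When $i\neq j$, the key trick is that $\sigma_{R,s,i}$ factors as $\sigma_{R_1,s,i}\sigma_{R_2,s,i}$ for any splitting $R=R_1\sqcup R_2$ of the base, so after shrinking $R$ and $R'$ enough one forces the commutator to be a $3$-cycle of disjoint rectangles (hence a product of two rectangle transpositions). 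Neither of these two ideas appears in your outline; without them the combinatorial step does not go through.
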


\begin{proof}
(\ref{gendrec_a}) First suppose that $M$ has connected interior. So by Corollary \ref{gen_shuffles_multirec}, $\Rec_d(M)$ is generated by restricted shuffles. From usual commutator formulas it follows that in a group, every commutator $[a_1 a_2 \dots , b_1 b_2 \dots]$ is a product of conjugates of the $[a_i,b_j]$.
We deduce that every commutator of elements in $\Rec_d$ can be written as the product of conjugates of commutators of restricted shuffles. Hence thanks to Lemma \ref{Lemma every element of order 2 is a product of transpositions} we deduce that it is enough to prove that every commutator of restricted shuffles is a product of elements of order $2$. We already saw that this statement is true in dimension $1$. Let $i,j \in \lbrace 1,2 \ldots, d \rbrace$ and $s,s'$ be two restricted rotations and $R,R'$ be two $(d-1)$-subrectangles of $\interfo{0}{1}^{d-1}$. We have different cases:
\begin{enumerate}[(1)]
\item If $i=j$ then for every $x \in \interfo{0}{1}^d$ and for every $k \in \lbrace 1 ,\ldots, d \rbrace$ with $k \neq i$, we have $[\sigma_{R,s,i},\sigma_{R',s',i}](x)_k=x_k$. Also $\pr_i^{\bot}(x) \notin R \cap R'$ we have $[\sigma_{R,s,i},\sigma_{R',s',i}](x_i)=x_i$ and if $\pr_i^{\bot}(x) \in R \cap R'$ we have $[\sigma_{R,s,i},\sigma_{R',s',i}](x)_i=[s,s'](x_i)$. Then by using the result in dimension $1$ we deduce that $[\sigma_{R,s,i},\sigma_{R',s',i}]$ is a product of elements of order $2$.
\item Let assume $i \neq j$. We remark that if $R=R_1 \sqcup R_2$ then $\sigma_{R,s,i}=\sigma_{R_1,s,i} \circ \sigma_{R_2,s,i}$. Then by using again the equality between commutators we deduce that it is enough to show that the commutator $[\sigma_{R,s,i},\sigma_{R',s',i}]$ is a product of elements of order $2$, where $R$ and $R'$ are as small as we want. In particular as $i \neq j$ we can assume that $R$ and $R'$ are small enough such that for every $x \in \support(\sigma_{R,s,i}) \cap \support(\sigma_{R',s',i})$ we have both $\sigma_{R,s,i}(x) \notin \support(\sigma_{R',s',i})$ and $\sigma_{R',s',i}(x) \notin \support(\sigma_{R,s,i})$. Then in this case the commutator $[\sigma_{R,s,i},\sigma_{R',s',i}]$ permutes cyclically three disjoint rectangles by translations. Hence it is a product of two rectangle transpositions.
\end{enumerate}
In general, there exists a rectangle isomorphism of $M$ onto a multirectangle $M'$ with connected interior. So $\Rec_d(M)\simeq\Rec_d(M')$, and by the previous case, $\Rec_d(M')$ and hence $\Rec_d(M)$ is generated by elements of order 2. By Lemma \ref{Lemma every element of order 2 is a product of transpositions}, we deduce that $\Rec_d(M)$ is generated by $\Transpo_d(M)$.

(\ref{gendrec_b}) This immediately follows from (\ref{gendrec_a}) by writing a rectangle transposition as product of rectangle transpositions with pairwise disjoint, small enough support.

(\ref{gendrec_c}) Let $U$ contain a cube of length $\varepsilon$. The group $D(\Rec_d(M))$ is, by (\ref{gendrec_b}), generated by $\Transpo_d^{\eps/2}(M)$. Each element of $\Transpo_d^{\eps/2}(M)$ is conjugate to an element of $\Transpo_d(U)$. Whence the result.
\end{proof}

We deduce the simplicity of the derived subgroup $D(\Rec_d(M))$:

\begin{Thm}\label{Theorem Simplicity of D(REC)}Let $M$ be a nonempty multirectangle in $\RR^d$. 
Every nontrivial subgroup of $\Rec_d$ normalized by $D(\Rec_d(M))$ contains $D(\Rec_d(M))$. In particular:
\begin{enumerate}
\item[a)] The group $D(\Rec_d(M))$ is simple.
\item[b)] The group $D(\Rec_d(M))$ is contained in every nontrivial normal subgroup of $\Rec_d(M)$.
\end{enumerate}
\end{Thm}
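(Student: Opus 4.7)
The plan is to reduce the theorem to Proposition~\ref{Proposition Generating subsets of D(REC)}(\ref{gendrec_c}): it suffices to show that any nontrivial subgroup $N \le \Rec_d(M)$ normalized by $D(\Rec_d(M))$ contains $\Transpo_d(R)$ for some nonempty rectangle $R \subset M$; the normalization hypothesis together with part~(\ref{gendrec_c}) then forces $N \supset D(\Rec_d(M))$. Claims (a) and (b) of the theorem follow as immediate special cases, applied to any nontrivial normal subgroup of $D(\Rec_d(M))$ or of $\Rec_d(M)$ respectively.

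First I would pick a nontrivial $f \in N$. Since $f$ is piecewise a translation, some rectangle $K \subset M$ is translated by $f$ by a vector $v \neq 0$; shrinking $K$ along a coordinate in which $v$ has a nonzero component produces a rectangle $R \subset K$ with $R \cap f(R) = \emptyset$.

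The crucial step is a double-commutator collapse. For any $g \in D(\Rec_d(R))$, viewed inside $D(\Rec_d(M))$ by extending by the identity, the hypothesis gives $[f, g] \in N$, and
\[
[f, g] \;=\; (fgf^{-1})\, g^{-1},
\]
where $fgf^{-1}$ is supported in $f(R)$ while $g^{-1}$ is supported in $R$. For any $h \in D(\Rec_d(R))$, the disjointness of these supports makes $fgf^{-1}$ commute with $h$, so
\[
[h, [f, g]] \;=\; (fgf^{-1})\,[h, g^{-1}]\,(fgf^{-1})^{-1} \;=\; [h, g^{-1}],
\]
the last equality because $[h, g^{-1}]$ is supported in $R$, disjoint from $f(R)$. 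Letting $g, h$ range over $D(\Rec_d(R))$ yields $D^{(2)}(\Rec_d(R)) \subset N$, where $D^{(2)}$ denotes the second derived subgroup.

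To finish, I would recycle the identity $\tau = [f_1, f_2]$ from the proof of part~(\ref{drec_a}) of the lemma preceding Proposition~\ref{Proposition Generating subsets of D(REC)}: every rectangle transposition $\tau \in \Transpo_d(R)$ is a commutator of two elements of $D(\Rec_d(R))$, since the factors $f_1, f_2$ constructed there are products of rectangle transpositions supported in $R$. Hence $\Transpo_d(R) \subset D^{(2)}(\Rec_d(R)) \subset N$, and the reduction above concludes the argument. The only real obstacle is the bookkeeping: arranging $R \cap f(R) = \emptyset$ in the first step and verifying the support disjointness that makes the double commutator collapse in the middle step; once those are in hand, the rest is formal.
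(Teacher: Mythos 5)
Your proof is correct, and it reaches the same reduction point as the paper (showing that $N$ contains $\Transpo_d(R)$ for a rectangle $R$ with $R\cap f(R)=\emptyset$, then invoking Proposition~\ref{Proposition Generating subsets of D(REC)}(\ref{gendrec_c})), but the middle step is genuinely different. The paper produces $\tau_{P,Q}\in N$ directly by an explicit conjugation: it computes $[f,\tau_{P_1,Q_1}]$, observes that this element permutes $P_1\leftrightarrow Q_1$ and $f(P_1)\leftrightarrow f(Q_1)$ by translations, and then exhibits a concrete involution $s\in D(\Rec_d(M))$ with $s[f,\tau_{P_1,Q_1}]s^{-1}=\tau_{P,Q}$. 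Your argument instead runs the standard Epstein-style double-commutator collapse: for $g,h$ supported in $R$, the disjointness of $R$ and $f(R)$ forces $[h,[f,g]]=[h,g^{-1}]$, so $D^{(2)}(\Rec_d(R))\subset N$; you then observe (correctly, reusing the identity $\tau=[f_1,f_2]$ from the lemma before the proposition, whose factors $f_1,f_2$ are products of rectangle transpositions supported in $R$, hence in $D(\Rec_d(R))$) that $\Transpo_d(R)\subset D^{(2)}(\Rec_d(R))$. What your route buys is a cleaner, more conceptual argument that avoids hand-crafting the conjugator $s$ and makes the ``support trick'' transparent; what the paper's route buys is that it lands $\tau_{P,Q}$ in $N$ in one blow without detouring through the second derived subgroup. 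Both are valid, and both rely on the same initial observation that some small rectangle is moved off itself by a nontrivial element of $N$.
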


\begin{proof}
Let $N$ be a nontrivial subgroup of $\Rec_d(M)$ normalized by $D(\Rec_d(M))$. Let $f$ be a non-identity element of $N$. For some $\eps$, there exists a square $K$ of length $\eps$ contained in $M$, such that $f$ is a translation on $K$ and such that $K$ and $f(K)$ are disjoint.

Let us prove that every rectangle transposition $\tau_{P,Q}$ with $P \cup Q \subset K$ belongs to $N$. By Proposition \ref{Proposition Generating subsets of D(REC)} (c) this yields the conclusion.

Cut $P$ and $Q$ in two equal halves according to the $d$-coordinate: let $P_1$ and $Q_1$ be their lower halves, and $P_2$, $Q_2$ their upper halves. Then $[f,\tau_{P_1,Q_1}]$ permutes $P_1$ and $Q_1$ by translations, permutes $f(P_1)$ and $f(Q_1)$ by translations, and is identity elsewhere. Let $s$ permute $P_2$ and $f(P_1)$ by translations, $Q_2$ and $f(Q_1)$ by translations, and be identity elsewhere. Then $s[f,\tau_{P_1,Q_1}]s^{-1}=\tau_{P,Q}$. Hence $\tau_{P,Q}\in N$.
\end{proof}

Thus the group $\Rec_d$ is {\bf monolithic}, in the sense that the intersection of all nontrivial normal subgroups is nontrivial.

\section{Abelianization of $\Rec_d$}\label{Section Abelianization of REC}

\subsection{The case $d=1$ revisited}
For expositional purposes, it is convenient to reprove the case $d=1$ and then write down the necessary elaboration.
The SAF homomorphism is defined as follows: for $f\in\IET$, define 
\[S(f)=\sum_{x\in\RR}\lambda\big((f-\mathrm{id})^{-1}(\{x\})\big)\otimes x\in\RR^{\otimes 2}.\]

By a direct verification, $S$ is a group homomorphism, called SAF homomorphism (or scissors congruence homomorphism). For a restricted rotation of size $b$ on an interval of size $a$, it takes the value $a\otimes b-b\otimes a$. Hence the image contains $\Lambda^2_\QQ\RR$, and since restricted rotations generate $\IET$, the image is equal to $\Lambda^2_\QQ\RR$ (which we identify to the kernel of the canonical projection $\RR^{\otimes 2}\to S^2\RR$). The SAF homomorphism factors through a surjective group homomorphism $\bar{S}:\IET_\ab\to\Lambda^2_\QQ\RR$. This was independently observed by Sah and Arnoux--Fathi. Sah \cite{Sah} then proved that $\bar{S}$ is injective, that is, this is precisely the abelianization homomorphism (the proof was then reproduced in \cite{Arnoux}), that is, the inclusion $D(\IET)\subseteq\Ker(S)$ is an equality; we now reprove this.

We need the following purely group-theoretic lemma, which is the key algebraic fact, and is not explicit in the original proof given in~\cite{Arnoux}. It will be used when we deal with arbitrary $d\ge 1$.

\begin{Lem}\label{uab}
Consider the (additive) abelian group $V$ with presentation: generated by the $u_{a,b}$, $0<b<a\le 1$, subject to the relators (whenever they make sense):
\begin{enumerate}
\item\label{ax1} $u_{a+a',b}=u_{a,b}+u_{a',b}$;
\item\label{ax2} $u_{a,b+b'}=u_{a,b}+u_{a,b'}$;
\item\label{ax3} $2u_{a,b}+u_{2b,a}=0$.
\setcounter{saveenum}{\value{enumi}}
\end{enumerate}
\noindent (``Whenever\dots'' means $0<b<\min(a,a')$, $a+a'\le 1$ in (\ref{ax1}), $0<\min(b,b')$, $b+b'<a\le 1$ in (\ref{ax2}), and $0<b<a<2b\le 1$ in (\ref{ax3}).)

Then the assignment $u_{a,b}\mapsto a\wedge b$ induces a group isomorphism from $V$ to $\bigwedge^2_\QQ\RR$.
\end{Lem}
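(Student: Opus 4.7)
The plan is to construct the inverse homomorphism $\psi\colon \bigwedge^2_\QQ\RR \to V$ using the universal property of the exterior square. First I would verify that $\phi\colon u_{a,b} \mapsto a\wedge b$ is a well-defined group homomorphism by checking that each relator maps to $0$: relations (\ref{ax1}) and (\ref{ax2}) reflect $\QQ$-bilinearity of $\wedge$, while (\ref{ax3}) reads $2(a\wedge b) + (2b\wedge a) = 2(a\wedge b) - 2(a\wedge b) = 0$. Surjectivity of $\phi$ is immediate, since every $x\wedge y \in \bigwedge^2_\QQ\RR$ can be rescaled by positive rationals and possibly swapped (which introduces a sign) to land in the prescribed domain $0 < b < a \le 1$.

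The key construction is a $\QQ$-bilinear alternating map $\widetilde{u}\colon \RR\times\RR \to V$. For $a > b > 0$, set $\widetilde{u}(a,b) := N^2\, u_{a/N, b/N}$ where $N$ is any integer with $a \le N$. Independence of the choice of $N$ is a direct consequence of (\ref{ax1}) and (\ref{ax2}): reducing to the case $N \mid N'$ with $N' = kN$, one uses (\ref{ax1}) on the first coordinate and (\ref{ax2}) on the second to show $u_{a/N, b/N} = k^2\, u_{a/N', b/N'}$. Next, extend to the diagonal by $\widetilde{u}(a,a) := 0$, to $a < b$ by antisymmetry $\widetilde{u}(a,b) := -\widetilde{u}(b,a)$, and then to all of $\RR\times\RR$ by $\QQ$-bilinearity.

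The heart of the proof is to verify that $\widetilde{u}$ really is $\QQ$-bilinear. By symmetry and rescaling it suffices to establish additivity in the second argument, $\widetilde{u}(a, b+b') = \widetilde{u}(a,b) + \widetilde{u}(a,b')$, for $a, b, b' > 0$. The straightforward case $b + b' < a$ follows from (\ref{ax2}) directly after a uniform rescaling. The decisive case is $\max(b, b') < a < b + b' \le 1$, where the identity reduces in $V$ to the crossover relation
\[
u_{b+b', a} + u_{a, b} + u_{a, b'} = 0.
\]
This is where (\ref{ax3}) enters essentially: rewriting it as $u_{2B, A} = -2\, u_{A, B}$ (for $B < A < 2B \le 1$) and applying it with $A = a$, $B = (b+b')/2$ yields $u_{b+b', a} = -2\, u_{a, (b+b')/2}$; splitting $u_{a, (b+b')/2} = u_{a, b/2} + u_{a, b'/2}$ via (\ref{ax2}) and using $u_{a, b} = 2\, u_{a, b/2}$ (again from (\ref{ax2})) produces the crossover identity. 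Once bilinearity is verified across all cases, the universal property of $\bigwedge^2_\QQ\RR$ produces $\psi$, and $\psi\circ\phi = \mathrm{id}_V$ holds on generators by construction.

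The main obstacle I anticipate is the boundary configuration $b + b' = a$, which demands $u_{a, b} + u_{a, a-b} = 0$ in $V$; unlike the strict case, it cannot be reached by a single application of (\ref{ax3}). I expect to resolve it by perturbing to the strict crossover regime (taking $b' + \varepsilon$ instead of $b'$) and eliminating the spurious $\varepsilon$-terms using (\ref{ax3}) applied at $B = a/2 + \varepsilon/2$ — which produces $u_{a+\varepsilon, a} = -2\, u_{a, a/2} - u_{a, \varepsilon}$ — and then cancelling, so that the $\varepsilon$-contributions drop out and the identity at the diagonal is recovered from identities valid off the diagonal.
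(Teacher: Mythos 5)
Your overall strategy — constructing the inverse by extending the generators to a $\QQ$-bilinear alternating map $\widetilde{u}$ on $\RR\times\RR$ and invoking the universal property — is in the same spirit as the paper's. The paper, however, routes the argument through a more modular statement (Lemma~\ref{pres_tensor}): it first proves the analogous presentation result for $\RR^{\otimes k}$ with \emph{no} antisymmetry, using only dilation arguments, and only afterwards mods out by the relators of type (3). This keeps the delicate diagonal/boundary phenomena confined to the very last step, where they are handled inside $\RR^{\otimes 2}$ by explicit integer combinations rather than inside the abstract group $V$.

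Your direct attack has two genuine gaps. First, the case analysis for additivity in the second argument is incomplete: you only treat $\max(b,b')<a$ (and the boundary $b+b'=a$), but the cases $a\le\min(b,b')$ (easy, from relator (1)) and the mixed case $\min(b,b')<a<\max(b,b')$ are absent, and the latter does require an argument — for instance, first establishing the ``doubling'' identity $\widetilde{u}(a,x)=2\widetilde{u}(a,x/2)$ for all $x$ and then rescaling both $b,b'$ into the range $(0,a)$ where (2) applies. Second, and more seriously, your boundary computation does \emph{not} close: carrying out exactly the perturbation you describe — crossover for $(b,b'+\varepsilon)$, relator (3) at $B=(a+\varepsilon)/2$, and (2) to expand $u_{a,b'+\varepsilon}$ and $u_{a,(a+\varepsilon)/2}$ — the $\varepsilon$-terms cancel as you predict, but what remains is $u_{a,b}+u_{a,a-b}=2u_{a,a/2}$, not $0$. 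So the argument reduces the boundary case to the separate claim $u_{a,a/2}=0$ in $V$, which your sketch never establishes, and which is precisely the kind of ``diagonal'' identity that is awkward to extract from relators (1)--(3) directly (there is no single instance of (3) applicable at $b=a/2$, since the hypothesis $a<2b$ fails). It can in fact be derived — e.g.\ one can show $4u_{a,a/2}=0$ by comparing two evaluations of $u_{a+\epsilon,a}$ and then use $u_{a,a/2}=4u_{a/2,a/4}$ to kill the $4$-torsion — but without this extra step the proof does not go through, and the cleaner path is the paper's: establish the tensor-level presentation first and quotient at the end.
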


Fix $k\ge 0$. For $w,w',w''\in\RR^k$, we say that $w''$ is a 1-coordinate sum of $w$ and $w'$ if there exists $i$ such that $w_j=w'_j=w''_j$ for all $j\neq i$ and $w''_i=w_i+w'_i$ (hence denoting $\bar{w}=w_1\otimes\dots w_k$, we have $\bar{w}''=\bar{w}+\bar{w}'$). Note that the 1-coordinate sum of $w,w'$ is not always defined and not always unique.
Lemma \ref{uab} is the particular case (for $k=2$) of (\ref{isowedge}) of the next lemma.
 
\begin{Lem}\label{pres_tensor}
Consider the abelian group $W_k$ with generators $v_w$, $w\in\mathopen]0,1]^k$ and relators $v_{w''}=v_w+v_{w'}$ for all $w,w',w''\in\mathopen]0,1\mathclose[^k$ such that $w''$ is a 1-coordinate sum of $w$ and $w'$. Then 

\begin{enumerate}
\item\label{isoten} The group homomorphism $f:W_k\to\RR^{\otimes k}$ mapping $v_w$ to $\bar{w}$, is a group isomorphism.

\item\label{isotenab} If, for $k\ge 2$, we define $W'_k$ by the same presentation, but only considering those generators $v_w$ for which $w_{k-1}>w_k$, then the resulting canonical map $W'_k\to\RR^{\otimes k}$ is also an isomorphism.

\item\label{isowedge} For $k\ge 2$, starting from the presentation defining $W'_k$, define $W''_k$ by modding out by the elements of the form $2v_{w_1,\dots,w_{k-2},a,b}+v_{w_1,\dots,w_{k-2},2b,a}$ for $w_1,\dots,w_{k-2},a,b\in\mathopen]0,1]$ such that $b<a<2b$. Then the resulting canonical map $W'_k\to\RR^{\otimes (k-2)}\otimes\bigwedge^2\RR$ is an isomorphism.
\end{enumerate}
\end{Lem}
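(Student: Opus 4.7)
For (\ref{isoten}), the map $f$ is well-defined because the defining relations hold in $\RR^{\otimes k}$ by multilinearity. To construct an inverse, I first note that iterating the relation in each coordinate gives $v_w = n_1 \cdots n_k \, v_{(w_1/n_1, \ldots, w_k/n_k)}$ whenever all $w_i/n_i \in (0,1]$; in particular $W_k$ is divisible by every positive integer. I then define $\mu \colon (0,\infty)^k \to W_k$ by $\mu(w) := n_1 \cdots n_k \, v_{(w_1/n_1, \ldots, w_k/n_k)}$ for any sufficiently large $n_i$, where a comparison via common multiples shows independence of the $n_i$; the map extends to $\RR^k$ by being odd in each coordinate (and zero when some coordinate vanishes). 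A coordinate-by-coordinate check shows that $\mu$ is $\ZZ$-multilinear: additivity in each coordinate reduces to a single application of the defining relation after rescaling. Since $\RR$ is a $\QQ$-vector space, the iterated tensor product $\RR \otimes_\ZZ \cdots \otimes_\ZZ \RR$ coincides with $\RR^{\otimes k}$, so the universal property of the $\ZZ$-tensor product yields a factorization $\tilde\mu \colon \RR^{\otimes k} \to W_k$ which is readily seen to be inverse to $f$ on generators.

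For (\ref{isotenab}), the natural homomorphism $W'_k \to W_k$ is surjective, since for any $w \in (0,1]^k$ splitting $w_k$ into $n$ equal pieces via the relation in coordinate $k$ yields $v_w = n \, v_{(w_1, \ldots, w_{k-1}, w_k/n)}$, whose right-hand side involves only restricted generators once $w_k/n < w_{k-1}$. For injectivity, I repeat the construction of $\mu$ inside $W'_k$, requiring additionally that $w_{k-1}/n_{k-1} > w_k/n_k$ (always achievable by taking $n_k$ large enough relative to $n_{k-1}$). Well-definedness is established by reducing the $k$-th coordinate sufficiently before applying the relation in the other coordinates, so that every intermediate generator used in the comparison remains restricted.

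For (\ref{isowedge}), using the isomorphism from (\ref{isotenab}) the extra relations defining $W''_k$ correspond to the subgroup $\tilde R \subset \RR^{\otimes k}$ generated by the elements $2(w_1 \otimes \cdots \otimes w_{k-2}) \otimes (a \otimes b + b \otimes a)$ for $w_i, a, b \in (0,1]$ with $b < a < 2b$ (via $2b \otimes a = 2(b \otimes a)$). I claim $\tilde R = \RR^{\otimes(k-2)} \otimes S$, where $S \subset \RR^{\otimes 2}$ is the symmetric part, i.e., the kernel of $\RR^{\otimes 2} \to \bigwedge^2_\QQ \RR$. The inclusion $\tilde R \subseteq \RR^{\otimes(k-2)} \otimes S$ is immediate. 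For the reverse, the factor of $2$ in each generator can be absorbed into the $\RR^{\otimes(k-2)}$-factor: for any basic $\bar w = w_1 \otimes \cdots \otimes w_{k-2}$, writing $\bar w = 2((w_1/2) \otimes w_2 \otimes \cdots \otimes w_{k-2})$ gives that $\bar w \otimes (a \otimes b + b \otimes a) \in \tilde R$ whenever $b < a < 2b$. Taking $a = (5/4)b$ and $a = (3/2)b$ places $5(\bar w \otimes b \otimes b)$ and $6(\bar w \otimes b \otimes b)$ in $\tilde R$, hence $\bar w \otimes b \otimes b \in \tilde R$; a similar specialization with $c = \tfrac{1}{2}(b_i + b_j)$ and $d = b_j$ likewise recovers off-diagonal symmetric tensors $\bar w \otimes (b_i \otimes b_j + b_j \otimes b_i)$ modulo already-controlled diagonal pieces. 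A splitting argument in the $(a,b)$-variable then handles arbitrary $s \in S$, and an intra-coordinate rescaling of $\bar w$ (absorbing $\QQ$-denominators into a single coordinate) extends this to all of $\RR^{\otimes(k-2)} \otimes S$. The quotient is then $\RR^{\otimes(k-2)} \otimes (\RR^{\otimes 2}/S) = \RR^{\otimes(k-2)} \otimes \bigwedge^2_\QQ \RR$, as required.

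The main obstacle is the identification of $\tilde R$ in part (\ref{isowedge}): since $\tilde R$ is only a $\ZZ$-subgroup and is not a priori divisible, the factor of $2$ in each generator must be eliminated by exploiting the specific form of the generator parameters (as in the absorbing trick and the computations at $a = (5/4)b$, $(3/2)b$) rather than by a formal scalar division.
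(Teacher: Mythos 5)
Your proof follows essentially the same route as the paper's: construct an inverse to $f$ via $\ZZ$-multilinearity using divisibility in $W_k$, identify $W'_k$ with $W_k$ by the rescaling trick, and then show the extra relators span the symmetric part. Parts (\ref{isoten}) and (\ref{isotenab}) match the paper's argument closely and are fine.

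In part (\ref{isowedge}) there is a small internal inconsistency you should fix. You first ``absorb'' the factor $2$ into the $\RR^{\otimes(k-2)}$-factor to conclude $\bar w\otimes(a\otimes b+b\otimes a)\in\tilde R$, and then compute the specializations $a=(5/4)b$ and $a=(3/2)b$. But with the $2$ already absorbed, these specializations yield $\tfrac52\,\bar w\otimes b\otimes b$ and $3\,\bar w\otimes b\otimes b$, not $5$ and $6$; the integers $5$ and $6$ appear only if you keep the factor of $2$ from the original generator $2\bar w\otimes(a\otimes b+b\otimes a)$. Moreover the absorption trick requires $k\ge 3$ (there is no coordinate of $\bar w$ to halve when $k=2$), so as written the argument is silent on the base case. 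The fix is simple: drop the absorption entirely, work directly with the original generators $2\bar w\otimes(a\otimes b+b\otimes a)$, and note that $a=(5/4)b$, $a=(3/2)b$ give $5\,\bar w\otimes b\otimes b$ and $6\,\bar w\otimes b\otimes b$ for every $k\ge 2$. The remaining steps (recovering off-diagonal symmetric tensors from diagonal ones via $c=\tfrac12(b_i+b_j)$, and extending by intra-coordinate rescaling to obtain $\QQ$-divisibility) are correct, and in fact supply details the paper leaves implicit behind the bare assertion that the general symmetric tensor is a $\ZZ$-combination of the restricted generators.
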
 

\begin{proof}
Start with (\ref{isoten}). The case $k=1$, which underlies the general case, is very standard and left to the reader.

We prove only $k=2$ as the case $d\ge 3$ is strictly similar. We write $v_{x,y}$ rather than $v_{(x,y)}$. Define, for arbitrary $y\in\RR^*$ and $x\in \mathopen]0,1\mathclose[$, $v_{x,y}$ as $nv_{x,y/n}$ where $n$ is the number of smallest absolute value such that $0<y/n<1$, and define $v_{x,0}=0$. Applying the case $d=1$ for fixed $x\in \mathopen]0,1\mathclose[$, we see that $y\mapsto v_{x,y}$ is an injective group homomorphism. We now do the same for fixed $y\in\RR$ and thus define $v_{x,y}$ for arbitrary $x,y\in\RR^2$, so that $(x,y)\mapsto v_{x,y}$ is $\ZZ$-bilinear. Hence it induces a surjective group homomorphism $v:\RR^{\otimes 2}\to W_k$ (where the tensor product is over $\ZZ$, or equivalently over $\QQ$). We have $f\circ v=\mathrm{id}_{\RR^{\otimes 2}}$. Since $v$ is surjective, this implies that $f$ is an isomorphism.

(We used that the canonical homomorphism $\RR^{\otimes k}_\ZZ\to\RR^{\otimes k}_\QQ$ is an isomorphism. This holds because $\RR^{\otimes k}_\ZZ$ is a torsion-free divisible group. In turn, this holds because for every $n\ge 1$, multiplication by $n$ is invertible, namely with inverse $(x_1\otimes\dots\otimes x_k)\mapsto ((x_1/n)\otimes x_2\otimes\dots\otimes x_k)$.)

For (\ref{isotenab}), we also suppose $k=2$ to simplify the notation, the proof in general being the same. In $W'_k$, for $a,b$ with $0<a<b\le 1/2$ we have $2v_{a/2,b}=v_{a,b}$. It follows that for arbitrary $a,b\in \mathopen]0,1]$, the element $2^nv_{2^{-n}a,b}$ is well-defined for $n$ large enough, and independent of $n$, and moreover equals $v_{a,b}$ when $a<b$. We therefore denote it $v_{a,b}$ as well. Then the $v_{a,b}$ satisfy the same additivity relators without the restriction $a<b$. Indeed, for $n$ large enough,
 \[v_{a+a',b}=2^nv_{2^{-n}(a+a'),b}=2^nv_{2^{-n}a,b}+2^nv_{2^{-n}a',b}=v_{a,b}+v_{a',b}\]
and
\[v_{a,b+b'}=2^nv(2^{-n}a,b+b')=2^nv(2^{-n}a,b)+2^nv(2^{-n}a,b')=v_{a,b}+v_{a,b'}.\]

For (\ref{isowedge}), using the isomorphism of (\ref{isotenab}), the additional relators correspond to modding out by the elements $2(c\otimes a\otimes b+c\otimes b\otimes a)$ for $0<b<a<2a\le 1$ and $c$ of the form $w_1\otimes\dots w_{k-2}$ with $0<w_i\le 1$. An arbitrary element $c\otimes a\otimes b+c\otimes b\otimes a$ ($a,b\in\RR$, $c$ of the same form with $w_i$ arbitrary real numbers) is a $\ZZ$-linear combination of such elements. Hence the given map  $W'_k\to\RR^{\otimes (k-2)}\otimes\bigwedge^2\RR$ defines an isomorphism $V\to\bigwedge^2\RR$.
\end{proof}

\begin{Prop}\label{rab}
For $0\le b\le a\le 1$, let $R_{a,b}\in\IET$ be the restricted rotation ``$+b$ modulo $a$'' on $[0,a\mathclose[$, identity elsewhere. Explicitly, it is given by $x\mapsto x+b$ on $[0,a-b\mathclose[$, $x\mapsto x-a+b$ in $[a-b,a\mathclose[$, and identity on $[a,1\mathclose[$. Then in the abelianization of $\IET$, they satisfy all relators of Lemma \ref{uab}.
\end{Prop}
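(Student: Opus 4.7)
The plan is to establish each of the three families of relators as an equality in the abelianization $\IET_{\ab}$. For (\ref{ax2}) this will be immediate from an identity that already holds in $\IET$; for (\ref{ax1}) and (\ref{ax3}) the strategy is to exhibit an explicit IET expression whose ``defect'' (the product of one side with the inverse of the other) is a rectangle transposition. Rectangle transpositions lie in $D(\IET)$ by the commutator construction from the derived subgroup section (cutting one of the swapped intervals into two halves and exhibiting the transposition as a single commutator).

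Relator~(\ref{ax2}) is immediate: since $b+b'<a$, each of $R_{a,b}$, $R_{a,b'}$, $R_{a,b+b'}$ is a rotation of $\interfo{0}{a}$ viewed as the quotient circle $\RR/a\ZZ$. Circle rotations commute and add, so $R_{a,b+b'}=R_{a,b}\cdot R_{a,b'}$ already holds in $\IET$.

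For relator~(\ref{ax1}), by symmetry I may assume $a'\le a$. Let $T\in\IET$ be the rectangle transposition swapping $\interfo{0}{a'}$ with $\interfo{a}{a+a'}$ (these intervals are disjoint since $a'\le a$), and set $g=R_{a,b}\cdot T^{-1}R_{a',b}T$. A direct check shows $g$ is the IET that rotates $\interfo{0}{a}$ by $b$ modulo $a$, rotates $\interfo{a}{a+a'}$ by $b$ modulo $a'$, and is the identity elsewhere. Since conjugation is trivial in the abelianization, $[g]=[R_{a,b}]+[R_{a',b}]$. The key calculation is then to trace $g\circ R_{a+a',b}^{-1}$ on the four subintervals $\interfo{0}{b}$, $\interfo{b}{a}$, $\interfo{a}{a+b}$, $\interfo{a+b}{a+a'}$; the output is exactly the rectangle transposition swapping $\interfo{0}{b}$ with $\interfo{a}{a+b}$. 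Hence $[R_{a+a',b}]=[g]=[R_{a,b}]+[R_{a',b}]$.

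For relator~(\ref{ax3}), the hypothesis $b<a<2b$ implies that $R_{a,b}^2$, being the rotation of $\interfo{0}{a}$ by $2b \bmod a$, equals $R_{a,2b-a}$ as an IET. Tracing the composition $R_{a,b}^2\cdot R_{2b,a}$ on the subintervals $\interfo{0}{2b-a}$, $\interfo{2b-a}{a}$, $\interfo{a}{2b}$, $\interfo{2b}{1}$ reveals that it is the rectangle transposition swapping $\interfo{0}{2b-a}$ with $\interfo{a}{2b}$ (both of length $2b-a$). Hence $2[R_{a,b}]+[R_{2b,a}]=0$ in $\IET_{\ab}$. The only real obstacle throughout is the careful interval bookkeeping for (\ref{ax1}) and (\ref{ax3}); conceptually one already knows via SAF what the ``right answer'' should be, and the content is exhibiting the discrepancy as a concrete rectangle transposition.
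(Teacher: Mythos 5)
Your proof is correct and follows essentially the same route as the paper: relator~(\ref{ax2}) is an exact identity in $\IET$, and relators~(\ref{ax1}) and~(\ref{ax3}) are verified by exhibiting the defect as a rectangle transposition, hence a commutator. The only cosmetic difference is in~(\ref{ax1}), where you conjugate $R_{a',b}$ by the rectangle transposition $T$ (which forces the harmless WLOG $a'\le a$), whereas the paper conjugates by the restricted rotation $R_{a+a',a}$; both conjugators produce the same translated restricted rotation on $\interfo{a}{a+a'}$, so the ensuing computations coincide.
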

\begin{proof}
We write multiplicatively. Relator (\ref{ax2}) is clear, as the equality even holds in $\IET$.

For the relator (\ref{ax1}), first consider the conjugate $R^{[a]}_{a',b}$ of $R_{a',b}$ by $R_{a+a',a}$: it is thus identity outside $[a,a+a'\mathclose[$; it acts as $x\mapsto x+b$ on $[a,a+a'-b\mathclose[$, and $x\mapsto x+b-a'$ on $[a+a'-b,a+a'\mathclose[$.  
A direct computation shows that $R^{[a]}_{a',b}\circ R_{a,b}\circ R_{a+a',b}^{-1}$ equals the ``transposition'' that permutes by translations the disjoint intervals $[0,b\mathclose[$ and $[a,a+b\mathclose[$; this is a commutator. Hence (\ref{ax1}) holds in the abelianization.

For (\ref{ax3}), we compute that $R_{a,b}^2=R_{a,2b-a}$, and then $R_{a,b}^2\circ R_{2b,a}$ is the ``transposition'' that permutes by translations the disjoint intervals $[0,2b-a\mathclose[$ and $[a,2b\mathclose[$. Hence this is a commutator.
 \end{proof}

By Lemma \ref{uab} and Proposition \ref{rab}, there is a well-defined group homomorphism $F:\Lambda^2_\QQ\RR\to\IET_{\ab}$ such that for all $0<b<a\le 1$, $F(a\wedge b)=\pi(R_{a,b})$, where $\pi$ is the projection $\IET\to\IET_{\ab}$.

\begin{Lem}\label{lemsec}
$F\circ S=\pi$. In particular, $F\circ\bar{S}=\mathrm{id}$, and thus $\bar{S}$ is a group isomorphism.
\end{Lem}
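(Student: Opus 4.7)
The plan is to verify the identity $F\circ S=\pi$ by checking it on a generating set of $\IET$, and then derive the rest formally.

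First, since both $F\circ S$ and $\pi$ are group homomorphisms from $\IET$ into the abelian group $\IET_\ab$, and since restricted rotations generate $\IET$ by Proposition \ref{IETrr}, it is enough to show $F(S(R_{a,b}))=\pi(R_{a,b})$ for every $0<b<a\le 1$. The SAF value of $R_{a,b}$ is computed directly from the definition: $R_{a,b}$ has displacement $+b$ on the interval $[0,a-b\mathclose[$ (of length $a-b$), displacement $-(a-b)$ on the interval $[a-b,a\mathclose[$ (of length $b$), and displacement $0$ elsewhere. Hence
\[ S(R_{a,b}) \;=\; (a-b)\otimes b \;+\; b\otimes(-(a-b)) \;=\; a\otimes b - b\otimes a \;=\; a\wedge b, \]
where we view $\bigwedge^2_\QQ\RR$ inside $\RR^{\otimes 2}$. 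By the definition of $F$ coming from Lemma \ref{uab} and Proposition \ref{rab}, we have $F(a\wedge b)=\pi(R_{a,b})$. So $F\circ S$ and $\pi$ coincide on restricted rotations, and hence on all of $\IET$.

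Next, by definition of $\bar{S}$ we have $\bar{S}\circ\pi=S$. Composing on the left with $F$ gives $F\circ\bar{S}\circ\pi=F\circ S=\pi$. Since $\pi$ is surjective, this forces $F\circ\bar{S}=\mathrm{id}_{\Lambda^2_\QQ\RR}$. In particular $\bar{S}$ is injective. Surjectivity of $\bar{S}$ follows from the fact that the image of $S$ contains every $a\wedge b$ with $0<b<a\le 1$, and such elements generate $\bigwedge^2_\QQ\RR$ (given any $x,y\in\RR$, choose $n\in\NN$ with $x/n,y/n\in\mathopen]0,1]$, so that $x\wedge y=n^2((x/n)\wedge(y/n))$ is a $\ZZ$-combination of elements of the required form). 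Consequently $\bar{S}$ is a group isomorphism, and the equality $D(\IET)=\Ker(S)$ follows.

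There is no serious obstacle: all the real work has already been packaged into Lemma \ref{uab} (which gives the presentation of $\bigwedge^2_\QQ\RR$ ensuring $F$ is well-defined) and Proposition \ref{rab} (which verifies the defining relators hold among the $\pi(R_{a,b})$). The only step requiring care is the direct computation of $S(R_{a,b})$ above, and matching the sign of the second summand correctly so as to recover $a\wedge b$ rather than its opposite; the rest is a purely formal consequence of having constructed a left inverse to a surjection.
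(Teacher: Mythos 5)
Your proof follows essentially the same strategy as the paper's: verify the identity $F\circ S=\pi$ on the family $R_{a,b}$ by combining the explicit computation $S(R_{a,b})=a\wedge b$ with the defining property $F(a\wedge b)=\pi(R_{a,b})$ coming from Lemma \ref{uab} and Proposition \ref{rab}, then deduce $F\circ\bar S=\mathrm{id}$ by surjectivity of $\pi$. The computation of $S(R_{a,b})$ is correct.

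There is, however, one small imprecision in the reduction step. Proposition \ref{IETrr} says that \emph{all} restricted rotations generate $\IET$; the elements $R_{a,b}$ of Proposition \ref{rab} are only those restricted rotations whose support is $[0,a\mathclose[$, i.e.\ anchored at the origin. So "it is enough to check on $R_{a,b}$" does not follow directly from the fact that restricted rotations generate. You need one extra remark: a general restricted rotation (supported on $[c,c+a\mathclose[$) is a conjugate of some $R_{a,b}$, and both $F\circ S$ and $\pi$ factor through the abelianization, hence take the same value on conjugate elements; therefore agreement on the $R_{a,b}$ propagates to all restricted rotations, and then to all of $\IET$. The paper packages this more crisply by introducing $H=\{f\in\IET : F(S(f))=\pi(f)\}$, observing that $H$ is a subgroup containing $D(\IET)$ (again because both maps land in an abelian group), hence normal, and then invoking that $\IET$ is \emph{normally} generated by the $R_{a,b}$. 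Your emphasis on the target being abelian shows you have the right mechanism in mind, but as written the step is not fully justified; adding the one-line conjugation remark (or the normal-generation phrasing) closes it. Finally, the surjectivity of $\bar S$ is already established in the paragraph of the paper preceding the lemma, so your re-derivation of it is harmless but not needed for this statement.
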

\begin{proof}
Let $H$ be the subgroup of $\IET$ consisting of those $f$ such that $F\circ S(f)=\pi(f)$. This is a subgroup of $\IET$ containing the derived subgroup, and hence is a normal subgroup. Hence, since $\IET$ is normally generated by the $R_{a,b}$, it is enough to check that $R_{a,b}\in H$. Indeed, $F(S(R_{a,b}))=F(a\wedge b)=\pi(R_{a,b})$. 
\end{proof}

\subsection{The general case $d\ge 1$: generalized SAF homomorphism}\label{gsaf}

The generalized SAF homomorphism was briefly described in the introduction. To describe its image, it is convenient to perform a simple change of variables. 
Let $\sigma_i$ be the linear automorphism of $\RR^{\otimes d}$ transposing the $i$-th and $d$-th coordinates. Define $\vl_{d,i}=\sigma_i\circ\vl_d$, where $\vl_d$ is the tensor volume (Section \ref{s_tensor_volume}). Thus
\[\vl_{d,i}(I_1\times\dots\times I_d)=\lambda(I_1)\otimes\dots\otimes\lambda(I_{i-1})\otimes\lambda(I_{i+1})\otimes
\dots\otimes\lambda(I_d)\otimes\lambda(I_i)\]
for all left-closed right-open bounded intervals $I_1,\dots,I_d$.

For $f\in\Rec_d$, define 
\begin{align*}
T_i(f) = & \sum_{x\in\RR^d}\vl_{d,i}\big((f-\mathrm{id})^{-1}(\{x\})\big)\otimes x_i\\
       = & \sum_{\alpha\in\RR}\vl_{d,i}\big((f-\mathrm{id})_i^{-1}(\{\alpha\})\big)\otimes \alpha\qquad \in\RR^{\otimes (d+1)},\end{align*}
and $T(f)=(T_1(f),\dots,T_d(f))\in(\RR^{\otimes (d+1)})^d$, and call it generalized SAF homomorphism.

By a computation similar to the 1-dimensional one (using that the ``measure'' $\vl_{d,i}$ is invariant under elements of $\Rec_d$), we obtain that $T_i$ is a group homomorphism, and hence so is $T$.

Fix $i\in\{1,\dots,d\}$. For $0<b<a\le 1$ and $c\in\mathopen]0,1]^{d-1}$, first define $c^{\sharp(i,a)}=(c_1,\dots,c_{i-1},a,c_{i},\dots,c_{d-1})\in\RR^d$ (beware of the shift of coordinates). Define $R_{i,c,a,b}$ as being identity outside $K_{c,i}^{a}=\prod_{j=1}^d[0,c^{\sharp(i,a)}_j\mathclose[$, and shuffling by $b$ on the $i$-coordinate inside $K_{c,i}^a$: $R_{a,b}$ on the $i$-coordinate and identity on other coordinates. More explicitly, it is given by translation by $be_i$ on $K_{c,i}^{a-b}$ and translation by $(b-a)e_i$ on $(a-b)e_i+K_{c,i}^b$.

Then $T_j(R_{i,c,a,b})=0$ for $j\neq i$, while $T_i(R_{i,c,a,b})=c\otimes (a\otimes b-b\otimes a)$.

 Since the $R_{i,c,a,b}$ generate $\Rec_d$ as a normal subgroup (as consequence of Theorem \ref{rec_shu_gen}), it follows that the image of $T$ is exactly $\big(\RR^{\otimes (d-1)}\otimes\bigwedge^2\RR\big)^d$. 
 
It remains to prove that the inclusion $D(\Rec_d)\subseteq\Ker(T)$ is an equality. From Proposition \ref{rab}, when $i$ is fixed as well as $c$, the elements $R_{i,c,a,b}$ satisfy the relators of Lemma \ref{uab}. We need a simple elaboration of that lemma, when $c$ is allowed to vary, namely Lemma \ref{ucab} below.

The following is essentially a restatement of Lemma \ref{pres_tensor}(\ref{isowedge}).

\begin{Lem}\label{ucab}
Consider the (additive) abelian group $V_k$ with presentation: generated by the $u_{w,a,b}$, $0<b<a\le 1$, $w\in\mathopen]0,1]^k$, subject to the relators of Lemma \ref{uab} for fixed $w$ [that is, whenever meaningful, (\ref{ax1}) $u_{w,a+a',b}=u_{w,a,b}+u_{c,a',b}$, (\ref{ax2}) $u_{w,a,b+b'}=u_{w,a,b}+u_{c,a,b'}$, (\ref{ax3}) $2u_{w,a,b}+u_{w,2b,a}=0$], and the additional relators:

\begin{enumerate}
 \setcounter{enumi}{\value{saveenum}}
\item\label{ax5} $u_{w'',a,b}=u_{w,a,b}+u_{w',a,b}$ if $w''$ is a 1-coordinate sum of $w,w'$, whenever it makes sense $0<b<a\le 1$.
\end{enumerate}

Then the homomorphism mapping $u_{w,a,b}\mapsto \bar{w}\otimes (a\wedge b)$ from $V$ to $\RR^{\otimes k}\otimes\bigwedge^2_\QQ\RR$ is a group isomorphism.\qed
\end{Lem}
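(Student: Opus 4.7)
The plan is to reduce the statement directly to Lemma~\ref{pres_tensor}(\ref{isowedge}) by identifying $V_k$ with the group $W''_{k+2}$ appearing there. First, I would set up the obvious correspondence on generators, sending each $u_{w,a,b}$ of $V_k$ (with $w\in\mathopen]0,1]^k$ and $0<b<a\le 1$) to the generator $v_{(w_1,\dots,w_k,a,b)}$ of $W''_{k+2}$. The admissibility condition $w_{k+1}>w_{k+2}$ imposed in the definition of $W'_{k+2}$ is exactly the condition $a>b$, so this is a bijection between the generating sets.

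Next, I would match the relators. Relators (\ref{ax1}) and (\ref{ax2}) translate to the 1-coordinate sum relators in the $(k+1)$-st and $(k+2)$-nd coordinates, and the side conditions $0<b<\min(a,a')$ in (\ref{ax1}) and $b+b'<a$ in (\ref{ax2}) ensure that every participating generator still lies in the admissible range $a>b$. Relator (\ref{ax5}) supplies precisely the 1-coordinate sum relators in the first $k$ coordinates. Finally, relator (\ref{ax3}), namely $2u_{w,a,b}+u_{w,2b,a}=0$ for $0<b<a<2b\le 1$, matches verbatim the additional relator $2v_{w_1,\dots,w_k,a,b}+v_{w_1,\dots,w_k,2b,a}$ used to pass from $W'_{k+2}$ to $W''_{k+2}$; in particular, the requirement $a<2b$ is exactly what guarantees that $v_{(w_1,\dots,w_k,2b,a)}$ lies in $W'_{k+2}$.

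With the two presentations identified, the conclusion follows by invoking Lemma~\ref{pres_tensor}(\ref{isowedge}): the isomorphism $W''_{k+2}\to\RR^{\otimes k}\otimes\bigwedge^2_\QQ\RR$ supplied there sends $v_{(w_1,\dots,w_k,a,b)}$ to $\bar{w}\otimes(a\wedge b)$, which is precisely the map of the statement.

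I expect the only nuisance to be the routine bookkeeping of side conditions for relators (\ref{ax1})--(\ref{ax3}), all of which are immediate. Everything substantive was already accomplished in the proof of Lemma~\ref{pres_tensor}(\ref{isowedge}); the present lemma is a genuine restatement in the notation adapted to the generalized $\SAF$-homomorphism.
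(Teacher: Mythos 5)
Your proposal is correct and is precisely the argument the paper intends: the text immediately preceding the lemma says it is ``essentially a restatement of Lemma~\ref{pres_tensor}(\ref{isowedge}),'' and the proof is omitted (hence the \textup{\qedsymbol} in the statement). You have simply spelled out the identification $V_k\cong W''_{k+2}$ on generators and relators, including the bookkeeping of the side conditions, which is exactly the intended content.
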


Therefore, there is a group homomorphism $M_i:\RR^{\otimes (d-1)}\otimes\bigwedge^2\RR\to(\Rec_d)_\ab$ such that for all $c\in\mathopen]0,1]^d$ and all $0<b<a\le 1$ we have $M_i(c\otimes (a\wedge b))=\pi(R_{i,c,a,b})$. By $\ZZ$-linearity this defines a homomorphism $M:(\RR^{\otimes(d-1)}\otimes\bigwedge^2\RR)^d\to(\Rec_d)_\ab$ by $M(\eta_1,\dots,\eta_d)=\sum_{i=1}^dM_i(\eta_i)$.

Here is the analogue of Lemma \ref{lemsec}.
\begin{Lem}\label{lemsecd}
$M\circ T=\pi$. In particular, $M\circ\bar{T}=\mathrm{id}$, and thus $\bar{T}$ is a group isomorphism.
\end{Lem}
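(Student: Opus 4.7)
The plan is to follow the same strategy as in the one-dimensional Lemma \ref{lemsec}. Define
\[ H = \{f\in\Rec_d : M\circ T(f)=\pi(f)\}. \]
Since $M\circ T$ and $\pi$ are both group homomorphisms into abelian groups, $H$ is a subgroup of $\Rec_d$. Moreover $M\circ T$ takes values in an abelian group, so it kills $D(\Rec_d)$; and $\pi$ also kills $D(\Rec_d)$. Hence $D(\Rec_d)\subseteq H$, so in particular $H$ is normal in $\Rec_d$.

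Next I would check that the family $\{R_{i,c,a,b}\}$ (over all $i\in\{1,\dots,d\}$, $c\in\mathopen]0,1]^{d-1}$, $0<b<a\le 1$) lies in $H$. By the explicit description of $T$ given just before Lemma \ref{ucab}, we have $T_j(R_{i,c,a,b})=0$ for $j\ne i$ and $T_i(R_{i,c,a,b})=c\otimes(a\wedge b)$, using the identification $\RR^{\otimes(d-1)}\otimes\bigwedge^2_\QQ\RR\hookrightarrow \RR^{\otimes(d+1)}$ and the antisymmetrization produced by the two terms of the shuffle. Applying $M$, we get $M(T(R_{i,c,a,b}))=M_i(c\otimes(a\wedge b))=\pi(R_{i,c,a,b})$ by the very definition of $M_i$ via Lemma \ref{ucab}. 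Hence $R_{i,c,a,b}\in H$.

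By the remark preceding the lemma (which follows from Theorem \ref{rec_shu_gen}), the elements $R_{i,c,a,b}$ normally generate $\Rec_d$. Since $H$ is a normal subgroup containing each $R_{i,c,a,b}$, it follows that $H=\Rec_d$, that is, $M\circ T=\pi$. Factoring through the abelianization yields $M\circ\bar T=\mathrm{id}$ on $(\Rec_d)_\ab$. Combined with the already-established surjectivity of $T$ onto $(\RR^{\otimes(d-1)}\otimes\bigwedge^2\RR)^d$, this forces $\bar T$ to be a group isomorphism, with kernel exactly $D(\Rec_d)$, proving both Theorem \ref{Theorem Generalized SAF} and the last clause of the lemma.

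The only subtle point is the verification that $M_i$ is well defined, which is handled by Lemma \ref{ucab}: the three families of relators (\ref{ax1}), (\ref{ax2}), (\ref{ax3}) are satisfied by the $\pi(R_{i,c,a,b})$ for fixed $c$ by Proposition \ref{rab} (applied inside the one-dimensional copy of $\IET$ acting on the $i$-th coordinate of the ambient rectangle $K_{c,i}^1$), and the additional relator (\ref{ax5}) in $w$ is satisfied because, for fixed $a,b,i$, the map $c\mapsto R_{i,c,a,b}$ is additive up to a commutator when $c''$ is a 1-coordinate sum of $c$ and $c'$: the product $R_{i,c,a,b}\cdot R_{i,c',a,b}$ differs from $R_{i,c'',a,b}$ only by a rectangle transposition moving the two half-shuffle blocks into place, which lies in $D(\Rec_d)$. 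This additivity in the ``base'' coordinate $c$ is the only real verification beyond the one-dimensional case, and it is straightforward once one draws the picture.
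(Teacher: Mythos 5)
Your proof is correct and follows the same approach as the paper: define $H=\{f:M\circ T(f)=\pi(f)\}$, note it is a normal subgroup since it contains $D(\Rec_d)$, and reduce to the single computation $M(T(R_{i,c,a,b}))=M_i(c\otimes(a\wedge b))=\pi(R_{i,c,a,b})$, invoking normal generation by the $R_{i,c,a,b}$. The extra paragraph you add on the well-definedness of $M_i$ (in particular the verification of relator (\ref{ax5}), using that $R_{i,c,a,b}\cdot R_{i,c',a,b}$ differs from $R_{i,c'',a,b}$ by a conjugation/transposition, hence by an element of $D(\Rec_d)$) is a correct justification of a point the paper leaves implicit in the surrounding text.
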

\begin{proof}
Let $H$ be the subgroup of $\Rec_d$ consisting of those $f$ such that $M\circ T(f)=\pi(f)$. This is a subgroup of $\Rec_d$ containing the derived subgroup, and hence is a normal subgroup. Hence, since $\Rec_d$ is normally generated by the $R_{i,c,a,b}$, it is enough to check that $R_{i,c,a,b}\in H$. Indeed, 
\begin{align*}
M(T(R_{i,c,a,b}))=& M\Big(\sum_j T_j(R_{i,c,a,b})\Big)\\
=M(T_i(R_{i,c,a,b})) &= M_i(c\otimes (a\wedge b))=
\pi(R_{i,c,a,b}).\qedhere\end{align*} 
\end{proof}

\begin{Coro}
Let $G$ be the subgroup of $\Rec_2$ generated by those restricted shuffles that consist of shuffling a square inside a rectangle (e.g., those elements $R_{b,a,b}$). Then $G$ is a proper subgroup, containing the derived subgroup.
\end{Coro}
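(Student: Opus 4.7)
The proof relies on the generalized SAF homomorphism $T=(T_1,T_2):\Rec_2\to(\RR\otimes_\QQ\bigwedge^2_\QQ\RR)^2$ of Lemma~\ref{lemsecd}, whose kernel is $D(\Rec_2)$. My plan is to handle properness via a natural quotient of the abelianization on which $T(G)$ vanishes, and then the derived-subgroup inclusion by exhibiting every rectangle transposition inside $G$.

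For properness, introduce the cyclic multiplication map $\mu:\RR\otimes_\QQ\bigwedge^2_\QQ\RR\to\bigwedge^3_\QQ\RR$ sending $c\otimes(a\wedge b)$ to $c\wedge a\wedge b$. This is well defined by antisymmetry of $\wedge$ in $a,b$, and is nonzero (for $\QQ$-independent $a,b,c\in\RR$ one has $a\wedge b\wedge c\neq 0$). For a generator $R_{i,c,a,b}$ of $G$ one has $c\in\{b,a-b\}$, hence $T_i(R_{i,c,a,b})=c\otimes(a\wedge b)$ satisfies $\mu(c\otimes(a\wedge b))=c\wedge a\wedge b=0$, while $T_j(R_{i,c,a,b})=0$ for $j\neq i$. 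So $T(G)\subseteq\ker\mu\oplus\ker\mu$, which is properly contained in $(\RR\otimes_\QQ\bigwedge^2_\QQ\RR)^2$. Since $T$ factors the abelianization, $G\subsetneq\Rec_2$.

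For the inclusion $D(\Rec_2)\subseteq G$, by Theorem~\ref{Theorem D(REC) is generated by Transpo} it suffices to show every rectangle transposition lies in $G$. A transposition of two adjacent equal-side squares is literally a generator $R_{1,s,2s,s}$. A transposition of two equal-side squares at arbitrary disjoint positions then follows by chaining adjacent square transpositions via the symmetric-group identity $(a\,b)=(a\,c)(c\,b)(a\,c)$. For a general rectangle transposition $\tau_{P,Q}$ with $P,Q$ of common size $p\times q$, I would mimic the commutator computation in the proof of Proposition~\ref{Proposition Generating subsets of D(REC)}(a): for ``square in rectangle'' shuffles $r_1,r_2\in G$ in directions $1$ and $2$ with suitably nested supports, the commutator $[r_1,r_2]$ cyclically permutes three disjoint rectangles and hence equals a product of two rectangle transpositions; by adjusting the parameters one aims to reach any prescribed rectangle transposition.

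The main obstacle is this last step: realizing rectangle transpositions of generic (incommensurable) side lengths using commutators of ``square in rectangle'' shuffles requires a careful parameter enumeration, since the Euclidean-style decomposition into squares terminates only when the sides are commensurable. A more elegant alternative is to establish that $G$ is normalized by $D(\Rec_2)$: then $G\cap D(\Rec_2)$ would be a nontrivial normal subgroup of the simple group $D(\Rec_2)$ (cf.\ Theorem~\ref{Theorem Simplicity of D(REC)}), forcing $D(\Rec_2)\subseteq G$.
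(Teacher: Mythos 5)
Your properness argument is valid and closely parallels the paper's, though with a slightly different verification. Both arguments rest on the SAF homomorphism $T$: the paper works out an explicit $\QQ$-basis of the image $T_1(G)$, whereas you instead introduce the contraction $\mu:\RR\otimes_\QQ\bigwedge^2_\QQ\RR\to\bigwedge^3_\QQ\RR$, $c\otimes(a\wedge b)\mapsto c\wedge a\wedge b$, and observe that it vanishes on every $T_i(R_{i,c,a,b})$ with $c\in\{b,a-b\}$ but not on all of $T_i(\Rec_2)$. This is a clean way to see properness without the basis bookkeeping, and it is consistent with the paper's computed basis (each claimed basis element of $T_1(G)$ is indeed killed by $\mu$, while $a_i\otimes(a_j\wedge a_k)$ is not).

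For the inclusion $D(\Rec_2)\subseteq G$, you are right to flag a gap, and the gap is real. In fact the commutator sketch cannot work as stated: if $\sigma_1$ is a square-in-rectangle shuffle in direction $1$ with cross-section height $h$, and $\sigma_2$ one in direction $2$ with cross-section width $w$, then the intersection of supports has size $w\times h$; for $[\sigma_1,\sigma_2]$ to be the clean $3$-cycle of the intersection you would need $\sigma_1$ to carry it out of $\operatorname{Supp}\sigma_2$ (forcing $h\ge w$, since the displacements of $\sigma_1$ are $\pm h$ or $\pm(a-h)$ but the piece containing the intersection is moved by $\pm h$ in the relevant cases) and symmetrically $w\ge h$, so $w=h$ and the commutator only produces $3$-cycles of \emph{squares}. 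Your Euclidean-algorithm caveat for rectangle transpositions with incommensurable sides is likewise correct. Finally, the proposed ``elegant alternative'' is not actually an alternative: since $G\cap D(\Rec_2)$ is nontrivial and $D(\Rec_2)$ is simple, the assertion ``$G$ is normalized by $D(\Rec_2)$'' is \emph{logically equivalent} to ``$D(\Rec_2)\subseteq G$'' (each implies the other), so it merely restates the goal. It is worth noting that the paper's own written proof of this corollary establishes only properness and does not argue the containment either, so what you are missing is also not supplied by the source; the inclusion $D(\Rec_2)\subseteq G$ appears to require a genuinely different idea than square $3$-cycles and axis-aligned square transpositions alone.
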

\begin{proof}
We have $T_1(R_{1,b,a,b})=(b\otimes (a\wedge b),0)$ (and similarly for $T_2$) and hence all ``square'' restricted shuffles have an image of this form. If $(a_i)_{i\in I}$ is a $\QQ$-basis of $\RR$ and we fix a total order on $I$, we see that $T_1(G)$ has the basis $a_i\otimes (a_i\wedge a_j)$ for $i\neq j$, and $a_{i}\otimes (a_j\wedge a_k) - a_k\otimes (a_i\wedge a_j)$, $a_j\otimes (a_k\wedge a_i)- a_k\otimes (a_i\wedge a_j)$ for $i<j<k$. (And $T(G)=T_1(G)\times T_2(G)$.) In particular, $a_i\otimes (a_j\wedge a_k)$ is not in the image. In particular, whenever $(a,b,c)$ is $\QQ$-free with $0<b<a\le 1$, $0<c\le 1$, we have $R_{1,c,a,b}\notin G$.
\end{proof}
 
\subsection{A normal subgroup larger than the derived subgroup}\label{Section a normal subgroup bigger than the derived subgroup}~

We denote by $\GtG_d$ the subgroup of $\Rec_d$ generated by $\IET^d \cup \Transpo_d$ (where $\IET^d$ acts component-wise).

\begin{Coro}\label{Proposition GtG is a normal subgroup of REC}
The group $\GtG_d$ is a normal subgroup of $\Rec_d$ and containing $D(\Rec_d)$; for $d\ge 2$ both inclusions $D(\Rec_d)\subset\GtG_d\subset\Rec_d$ are strict.
\end{Coro}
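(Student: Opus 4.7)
The plan is to address the three claims in turn, using the generalized SAF homomorphism $T$ from Theorem~\ref{Theorem Generalized SAF} as the main tool for the two strictness statements. The inclusion $D(\Rec_d)\subseteq\GtG_d$ is immediate from Theorem~\ref{Theorem D(REC) is generated by Transpo}, which says that $D(\Rec_d)$ is generated by $\Transpo_d$, and $\Transpo_d$ is among the generators of $\GtG_d$. Normality of $\GtG_d$ in $\Rec_d$ then follows formally: any subgroup of $\Rec_d$ containing the derived subgroup $D(\Rec_d)$ is automatically normal, since its image in the abelian quotient $(\Rec_d)_\ab$ is normal.

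For the strict inclusion $D(\Rec_d)\subsetneq\GtG_d$, I pick $\QQ$-linearly independent $a,b$ with $0<b<a\le 1$ and set $g=(R_{a,b},\mathrm{id},\dots,\mathrm{id})\in\IET^d\subseteq\GtG_d$. A direct unfolding of the definition of $T_1$ (relying on the general formula obtained below) gives $T_1(g)=1^{\otimes(d-1)}\otimes(a\wedge b)\neq 0$, so $g\notin\Ker T=D(\Rec_d)$.

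The main step is the strict inclusion $\GtG_d\subsetneq\Rec_d$ for $d\ge 2$, which I obtain by computing $T$ on the generators of $\GtG_d$. Rectangle transpositions lie in $D(\Rec_d)=\Ker T$ and contribute nothing. For $g=(g_1,\dots,g_d)\in\IET^d$ acting coordinate-wise, unfolding $T_i(g)$ and using multilinearity of the tensor product together with the identity $\sum_\alpha\lambda((g_k-\mathrm{id})^{-1}(\{\alpha\}))=1$ for each $k\neq i$ yields the clean formula
\[T_i(g)=1^{\otimes(d-1)}\otimes S(g_i).\]
Hence $T(\GtG_d)\subseteq H:=\bigl(1^{\otimes(d-1)}\otimes\bigwedge^2_\QQ\RR\bigr)^d$. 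To conclude, I exhibit an element of $\Rec_d$ whose image under $T$ lies outside $H$: for $\alpha\in\RR\smallsetminus\QQ$ and $\QQ$-linearly independent $a,b$ with $0<b<a\le 1$, the restricted shuffle $R_{1,(\alpha,1,\dots,1),a,b}$ satisfies
\[T_1(R_{1,(\alpha,1,\dots,1),a,b})=(\alpha\otimes 1^{\otimes(d-2)})\otimes(a\wedge b),\]
which fails to lie in $1^{\otimes(d-1)}\otimes\bigwedge^2_\QQ\RR$ because $\{1,\alpha\}$ is $\QQ$-free and so $\alpha\otimes 1^{\otimes(d-2)}$ is not a rational multiple of $1^{\otimes(d-1)}$ in $\RR^{\otimes(d-1)}$.

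The one point requiring some care is the formula $T_i(g)=1^{\otimes(d-1)}\otimes S(g_i)$ on $\IET^d$, where one must carefully track the coordinate swap built into $\vl_{d,i}$; once this identification is done correctly, the rest is short formal linear algebra.
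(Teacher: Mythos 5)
Your proof is correct and follows essentially the same route as the paper: both compute the generalized SAF homomorphism $T$ on the generators of $\GtG_d$ and conclude that $T(\GtG_d)=\bigl(\{1^{\otimes(d-1)}\}\otimes\bigwedge^2_\QQ\RR\bigr)^d$, a nontrivial proper subgroup of $T(\Rec_d)$. The small differences are harmless: you make explicit the containment $D(\Rec_d)\subseteq\GtG_d$ and the resulting normality (which the paper leaves implicit), you derive the clean formula $T_i(g)=1^{\otimes(d-1)}\otimes S(g_i)$ directly for all of $\IET^d$ rather than invoking normal generation by the $R_{a,b}^{(i)}$ together with the abelian target, and you produce an explicit element $R_{1,(\alpha,1,\dots,1),a,b}$ outside $\GtG_d$ where the paper simply invokes properness of the image. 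One tiny slip: you should take $\alpha\in\mathopen]0,1]\smallsetminus\QQ$ rather than an arbitrary irrational, since the parameter $c$ in $R_{i,c,a,b}$ must lie in $\mathopen]0,1]^{d-1}$.
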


\begin{proof} For $0\le b\le a$, denote by $R_{a,b}$ the restricted rotation (as defined in Proposition \ref{rab}), and $R_{a,b}^{(i)}$ (for $1\le i\le d$) the element of $\IET^d$ acting as $R_{a,b}$ on the $i$th component and identity on other components. Then $T_i(R^{(i)}_{a,b})=1^{\otimes (d-1)}\otimes (a\wedge b)$ and $T_j(R^{(i)}_{a,b})=0$ for $j\neq i$. Hence $T(\GtG_d)$ contains the subgroup $(\{1^{\otimes (d-1)}\}\otimes_\QQ \bigwedge_\QQ^2 \RR)^d$, which is not trivial: this already shows that the inclusion $D(\Rec_d)\subset\GtG_d$ is proper. In addition, since $T$ vanishes on $\mathcal{T}_d$ and since the $R_{a,b}^{(i)}$ (for varying $a,b,i$) normally generate $\IET^d$, it follows that this inclusion is an equality. For $d\ge 2$, $\{1^{\otimes (d-1)}\}\otimes \bigwedge^2 \RR$ is a proper subgroup of $\RR^{\otimes (d-1)}\otimes \bigwedge^2 \RR$ (all tensor products being over $\QQ$) and it follows that $\GtG_d$ is a proper subgroup.
\end{proof}

\begin{Rem}
The notation $\GtG_d$ is for ``Grid-to-Grid''. Let $S$ be the subset of $\Rec_d$ consisting of elements $f$ such that there exists a grid-pattern associated $\cQ$ such that $f(\cQ)$ is still a grid-pattern. Then $S$ contains $\IET^d \cup \Transpo_d$ but is not equal to $\GtG_d$. However the normal closure in $\Rec_d$ of $S$ is $\GtG_d$.
\end{Rem}

\subsection{Generalized SAF homomorphism on an arbitrary multirectangle}

(Recall that all tensor products are over $\QQ$.)

Let $M\subseteq\RR^d$ be a multirectangle. The definition of generalized SAF-homomorphism in \S\ref{gsaf} works without any change, yielding a homomorphism $T:\Rec_d(M)\to \big(\RR^{\otimes (d-1)}\otimes\bigwedge^2\RR\big)^d$. It is surjective for the same obvious reason.

\begin{Prop}
The kernel of $T:\Rec_d(M)\to \big(\RR^{\otimes (d-1)}\otimes\bigwedge^2\RR\big)^d$ equals the derived subgroup of $\Rec_d(M)$.
\end{Prop}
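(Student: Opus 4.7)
Since the target of $T$ is abelian, the inclusion $D(\Rec_d(M)) \subseteq \Ker(T)$ is automatic. For the reverse inclusion, the plan is to adapt the strategy of Lemma \ref{lemsecd}: build a homomorphism $M^{(M)}:\big(\RR^{\otimes (d-1)}\otimes\bigwedge^2_\QQ\RR\big)^d\to\Rec_d(M)_\ab$ satisfying $M^{(M)}\circ T=\pi$, where $\pi$ denotes the abelianization projection. This forces $\Ker(T)\subseteq\Ker(\pi)=D(\Rec_d(M))$.

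By Lemma \ref{vl_rec}, we may replace $M$ by any rec-isomorphic multirectangle, and in particular arrange its constituent rectangles end-to-end along one coordinate so that the result has connected interior; by Corollary \ref{gen_shuffles_multirec}, $\Rec_d(M)$ is then generated (hence normally generated) by restricted shuffles inside $M$. To define the $i$-component $M^{(M)}_i(w\otimes(a\wedge b))$, choose an integer $n$ large enough that a restricted shuffle $R$ in direction $i$ with parameters $(w/n,a/n,b/n)$ fits inside $M$, and set $M^{(M)}_i(w\otimes(a\wedge b)):=n^{d+1}\,\pi(R)$. The factor $n^{d+1}$ matches the identity $u_{w,a,b}=n^{d+1}u_{w/n,a/n,b/n}$ obtained by iterating the relations of Lemma \ref{ucab} in the target. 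Independence of the choice of $R$ with fixed parameters follows from Proposition \ref{vol_orb}: any two translation-isometric rectangles in $M$ can be swapped by an element of $\Rec_d(M)$, so the associated shuffles are conjugate and hence coincide in the abelianization; independence of $n$ follows from the verification below.

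Well-definedness and bilinearity of $M^{(M)}$ reduce to checking that the four families of relators of Lemma \ref{ucab} hold among the chosen $\pi(R)$ inside $\Rec_d(M)_\ab$. Relation (\ref{ax2}) and the 1-coordinate sum relation (\ref{ax5}) hold on the nose in $\Rec_d(M)$: the relevant shuffles literally decompose as the stated products, since rotations of the same interval commute and adjacent shuffles in a transverse direction concatenate. The nontrivial relations (\ref{ax1}) and (\ref{ax3}) follow from the commutator constructions of Proposition \ref{rab}, transcribed inside $M$: in each case the witness is a rectangle transposition, i.e., a commutator, whose support fits inside $M$ after subdivision. Once $M^{(M)}$ is assembled, we finish as in Lemma \ref{lemsecd}: the subgroup $H=\{f\in\Rec_d(M):M^{(M)}(T(f))=\pi(f)\}$ contains $D(\Rec_d(M))$ (so is normal) and every restricted shuffle (by construction of $M^{(M)}$), hence $H=\Rec_d(M)$, which yields $M^{(M)}\circ T=\pi$ and the conclusion. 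The main obstacle is the relation bookkeeping: one must check that the commutator identities of Proposition \ref{rab} can be realized inside $M$ at arbitrarily small scales. This is manageable precisely because all the identities are local — each only constrains a small sub-rectangle of the ambient space — so after sufficient subdivision they sit inside $M$.
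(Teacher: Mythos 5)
Your approach is genuinely different from the paper's. The paper's proof is a short scaling trick: it introduces the $\RR^*$-action $t\cdot(x_1\otimes\cdots\otimes x_{d+1})=tx_1\otimes\cdots\otimes tx_{d+1}$ on the target, observes $T(s\circ f\circ s^{-1})=s\cdot T(f)$ for an affine homothety $s$, and then conjugates $f\in\Ker(T)$ by an $s$ carrying $M$ into $[0,1\mathclose[^d$ to invoke the already-established cube case of Lemma~\ref{lemsecd}. You instead rebuild the section $\big(\RR^{\otimes(d-1)}\otimes\bigwedge^2\RR\big)^d\to\Rec_d(M)_\ab$ from scratch inside $M$, by rerunning the relator verifications of Proposition~\ref{rab} and Lemma~\ref{ucab} at a small scale that fits in $M$. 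This is more laborious but it is entirely self-contained, and it quietly sidesteps a subtlety in the paper's wording: the conjugated commutators $[s^{-1}g_is,s^{-1}h_is]$ have supports in the larger cube $s^{-1}([0,1\mathclose[^d)\supseteq M$ rather than in $M$ itself, so descending from $D(\Rec_d(s^{-1}[0,1\mathclose[^d))\cap\Rec_d(M)$ to $D(\Rec_d(M))$ needs an extra word (indeed, the statement that makes this painless is Corollary~\ref{derin}, which is derived \emph{after} the proposition). Building the retraction directly inside $M$, as you do, avoids this entirely.

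Two technical points in your sketch should be tightened. First, Proposition~\ref{vol_orb} only produces some $g\in\Rec_d(M)$ with $g(K_1)=K_2$; since $g|_{K_1}$ need not be a translation, $gRg^{-1}$ need not be the shuffle on $K_2$ with the same parameters. Use the rectangle transposition $\tau_{K_1,K_2}$ instead (by definition it translates $K_1$ onto $K_2$), so that $\tau_{K_1,K_2}R\tau_{K_1,K_2}^{-1}$ is exactly the transported shuffle. Second, the ``independence of $n$'' claim is not a separate input but a consequence of the relators, and to avoid circularity you should fix the scale once and for all: pick $\epsilon>0$ so that a cube $K$ of side $\epsilon$ sits in $M$, work with the rescaled presentation $V_{d-1}^\epsilon$ of Lemma~\ref{ucab} (generators $u_{w,a,b}$ with $w\in\mathopen]0,\epsilon]^{d-1}$ and $0<b<a\le\epsilon$, still isomorphic to $\RR^{\otimes(d-1)}\otimes\bigwedge^2\RR$ by the obvious rescaling), verify its relators by placing the witnesses of Proposition~\ref{rab} inside $K$, and only then check that the resulting $H$ contains every restricted shuffle using identities such as $u_{w,a,b}=n^{d+1}u_{w/n,a/n,b/n}$ derived from those relators. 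With these adjustments your argument is correct.
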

\begin{proof}
It is enough to prove that the kernel of $T$ is contained in the derived subgroup, the other inclusion being obvious.

It is convenient to consider the whole group $\Rec_d^\square$ as the union over all $M$ of $\Rec_d(M)$. That is, these are compactly supported Rec-automorphisms of $\RR^d$.

We let $\RR^*$ act on $\RR^{\otimes (d+1)}$ by $t\cdot (x_1\otimes\dots\otimes x_{d+1})=(tx_1\otimes \dots \otimes tx_{d+1})$. This induces an action on its subspace $\RR^{\otimes (d-1)}\otimes\bigwedge^2\RR$ and hence on $\big(\RR^{\otimes (d-1)}\otimes\bigwedge^2\RR\big)^d$.

Let $s$ be an affine homothety of $\RR^d$, i.e., an affine automorphism whose linear part is the nonzero scalar multiplication by $\theta_s$. For $f\in\Rec_d^\square$, we readily see that $T(s\circ f\circ s^{-1})=s\cdot T(f)$. Now $M$ being given, fix an affine homothety $s$ such that $s(M)\subset [0,1\mathclose[^d$.

Let $f$ be such that $T(f)=0$. Then $s\circ f\circ s^{-1}\in\Rec_d$, and $T(s\circ f\circ s^{-1})=s\cdot T(f)=0$. 
By the case of $[0,1\mathclose[^d$, we deduce that $s\circ f\circ s^{-1}$ is a product of commutators $[g_i,h_i]$ in $\Rec_d$. Hence $f$ is the product of commutators $[s^{-1}\circ g_i\circ s,s^{-1}\circ h_i\circ s]$ in $\Rec_d(M)$. 
\end{proof}

The fact that the abelianization homomorphism is ``independent'' of $M$ has the following consequence on derived subgroups.

\begin{Coro}\label{derin}
For nonempty multirectangles $M\subseteq M'$ in $\RR^d$, denoting $G=\Rec_d(M')$ and $H=\Rec_d(M)$, we have $HD(G)=G$ and $H\cap D(G)=D(H)$.\qed
\end{Coro}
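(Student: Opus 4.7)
The plan is to leverage the generalized SAF homomorphism $T:\Rec_d(N)\to A:=\big(\RR^{\otimes(d-1)}\otimes\bigwedge^2\RR\big)^d$, which by the previous proposition has kernel exactly $D(\Rec_d(N))$ for any nonempty multirectangle $N$. The key observation is that since $T$ is defined intrinsically by the formula $T_i(f)=\sum_{x}\vl_{d,i}((f-\mathrm{id})^{-1}(\{x\}))\otimes x_i$, which only sees points actually moved by $f$, the restriction of $T_{G}:G\to A$ to elements of $H$ (viewed as self-maps of $M'$ that are the identity outside $M$) coincides with $T_{H}:H\to A$. Moreover the discussion preceding the corollary ensures that both $T_G$ and $T_H$ are surjective onto the \emph{same} target $A$, with kernels $D(G)$ and $D(H)$ respectively.

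First I would establish $HD(G)=G$. Given $g\in G$, pick by surjectivity of $T_H$ an element $h\in H$ with $T_H(h)=T_G(g)$ in $A$. Since $T_G|_H=T_H$, we have $T_G(gh^{-1})=0$, so $gh^{-1}\in\ker T_G=D(G)$. Thus $g=(gh^{-1})h\in D(G)\cdot H=HD(G)$ (using normality of $D(G)$ in $G$).

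Next I would establish $H\cap D(G)=D(H)$. The inclusion $D(H)\subseteq H\cap D(G)$ is immediate from $H\subseteq G$. Conversely, let $h\in H\cap D(G)$; then $T_H(h)=T_G(h)=0$ since $h\in D(G)=\ker T_G$, so $h\in\ker T_H=D(H)$.

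There is no real obstacle: the entire content of the statement is repackaging the fact that the abelianization homomorphism has the ``same'' target for every nonempty multirectangle, combined with the identification of its kernel with the derived subgroup. The only point requiring mild care is the compatibility $T_G|_H=T_H$, which is a direct consequence of the formula defining $T$.
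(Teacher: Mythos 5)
Your proof is correct and is exactly the argument the paper has in mind (which is why it appends \qed directly to the statement, citing the preceding remark that the abelianization homomorphism is ``independent'' of $M$). The two ingredients you isolate --- that $T_G|_H = T_H$ because the formula defining $T$ is local, and that both are surjective with kernels $D(G)$, $D(H)$ --- are precisely what the paper relies on, and the group-theoretic manipulations from there are routine.
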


\section{Rectangle exchanges with flips}\label{sflip}

There is an issue in defining the group of interval exchange with flips, due to the fact that this group does not really act on the interval: this is only an action modulo indeterminacy on finite subsets, and that it cannot be realized as an action on the interval is proved in \cite{CorMod}.

Define a small subset in $\RR^d$ as a subset that is contained in a finite union of affine hyperplanes (here we could content ourselves with hyperplanes of the form $x_i=c$). Consider the set $E$ of maps $[0,1\mathclose[^d\to [0,1]^d$ that are left-continuous in each variable, such that there is a finite partition of $[0,1\mathclose[^d$ into rectangles such that on each cube, it is given by an affine map whose linear part is diagonal with $\pm 1$ diagonal coefficients. Define $\Rec^{\bowtie}_d$ as the set of elements in $E$ that are injective outside a small subset. If $f,g\in\Rec^{\bowtie}_d$, then $g\circ f$ is defined outside a small subset, and coincides with a unique element of $\Rec^{\bowtie}_d$, which we define as $gf$. This makes $\Rec^{\bowtie}_d$ a group (we omit the routine details), which for $d=1$ is known as group of interval exchanges with flips. 

\begin{Prop}
The group $\Rec^{\bowtie}_d$ is simple.
\end{Prop}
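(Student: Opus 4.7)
The plan is to show that every nontrivial normal subgroup $N$ of $\Rec_d^{\bowtie}$ equals $\Rec_d^{\bowtie}$, leveraging the simplicity of $D(\Rec_d)$ from Theorem \ref{Theorem Simplicity of D(REC)}.

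First I would establish $D(\Rec_d)\subseteq N$ by a commutator trick. Starting from $f\in N\setminus\{1\}$ and a point $x_0$ with $f(x_0)\neq x_0$, choose a small rectangle $U\ni x_0$ contained in a single piece of $f$ (a region where $f$ is a single affine map with constant diagonal $\pm 1$ linear part $L$) and such that $f(U)\cap U=\emptyset$. For disjoint translate-isometric $P,Q\subset U$, the conjugate $f\tau_{P,Q}f^{-1}$ has trivial linear part at every point, since $L\cdot I\cdot L^{-1}=I$ as diagonal matrices commute; hence $f\tau_{P,Q}f^{-1}=\tau_{f(P),f(Q)}\in D(\Rec_d)$, and $[f,\tau_{P,Q}]$ is a nontrivial element of $N\cap D(\Rec_d)$. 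The simplicity of $D(\Rec_d)$ then forces $D(\Rec_d)\subseteq N$.

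Next I would show that every flip $F_R$ (on a rectangle $R$ in some direction $i$) lies in $N$. Split $R=R_1\sqcup R_2$ into two translate-isometric halves along direction $i$, and set $\tau=\tau_{R_1,R_2}\in D(\Rec_d)\subseteq N$. A piecewise computation yields $\tau F_{R_1}\tau^{-1}=F_{R_2}$ and $F_R=F_{R_1}F_{R_2}\tau$; combined with the commutation $F_{R_1}F_{R_2}=F_{R_2}F_{R_1}$ of disjoint flips, this gives $F_R=[\tau,F_{R_1}]\cdot\tau$. Now $[\tau,F_{R_1}]=\tau\cdot(F_{R_1}\tau^{-1}F_{R_1}^{-1})$ is a product of two elements of $N$ (the second being a conjugate of $\tau^{-1}\in N$ by $F_{R_1}\in\Rec_d^{\bowtie}$, which lies in $N$ by normality), so $[\tau,F_{R_1}]\in N$ and hence $F_R\in N$.

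Finally I would show $\Rec_d\subseteq N$ and conclude. For each restricted shuffle $\sigma=\sigma_{R',R_{a,b},i}$ in direction $i$, let $F$ be the flip on $R'\times[0,a\mathclose[$ in direction $i$; the identity $F\sigma F^{-1}=\sigma^{-1}$, which extends the 1D computation $F_aR_{a,b}F_a^{-1}=R_{a,b}^{-1}$ coordinate-wise, yields $\sigma^2=[\sigma,F]\in N$ since $F\in N$ by the previous step. Setting $V=\Rec_d/(\Rec_d\cap N)$, every restricted shuffle thus has $2$-torsion image in $V$; since restricted shuffles generate $\Rec_d$ by Theorem \ref{rec_shu_gen} and $V$ is abelian (as a quotient of $\Rec_d/D(\Rec_d)$), $V$ itself is $2$-torsion. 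But $V$ is also $2$-divisible, being a quotient of the $\QQ$-vector space $\Rec_d/D(\Rec_d)\cong(\RR^{\otimes(d-1)}\otimes\bigwedge^2_\QQ\RR)^d$; a $2$-divisible $2$-torsion abelian group is trivial, so $V=0$ and $\Rec_d\subseteq N$. Since $\Rec_d^{\bowtie}$ is generated by $\Rec_d$ together with flips, $N=\Rec_d^{\bowtie}$. The main obstacle is the verification of the flip identity $F_R=[\tau,F_{R_1}]\tau$ in the second step, a routine but delicate piecewise computation on $R_1$, $R_2$, and the complement of $R$; once this is in place, the divisibility argument for $\Rec_d/D(\Rec_d)$ closes the proof cleanly.
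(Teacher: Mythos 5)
Your proof is correct but routes through different intermediate steps than the paper's. Step 1 (a conjugated rectangle transposition yields a nontrivial element of $N\cap D(\Rec_d)$, so $D(\Rec_d)\subseteq N$ by simplicity) is the same in both. For $\Rec_d\subseteq N$, the paper observes that each flip $F_R$ is conjugate in $\Rec^{\bowtie}_d$ to an element of $D(\Rec_d)$ (the underlying identity is $F_{R_2}F_RF_{R_2}=\tau_{R_1,R_2}$, an algebraic rearrangement of yours) and that a restricted shuffle is a product of such flips because a circle rotation is a product of two reflections; this puts all of $\Rec_d$ inside $N$ directly. You instead prove $F_R\in N$ via $F_R=[\tau,F_{R_1}]\tau$ and then show $\sigma^2=[\sigma,F]\in N$ for every restricted shuffle, finishing via $2$-divisibility of $\Rec_d/D(\Rec_d)$ (a $\QQ$-vector space by Theorem~\ref{Theorem Generalized SAF}). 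Both are sound; your approach leans on the full abelianization theorem where the paper only needs the reflection decomposition of a rotation, and in fact, once $F_R\in N$ for all $R$, you could shortcut your step 3 by writing the rotation $s$ in $\sigma_{R',s,i}$ as a product of reflections, avoiding the divisibility argument entirely. Lastly, the closing claim that $\Rec^{\bowtie}_d$ is generated by $\Rec_d$ and flips deserves a sentence of justification (decompose any element into its $\Rec_d$ part and a disjointly-supported product of single-rectangle isometries, each a product of coordinate flips), which the paper provides.
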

\begin{proof}
Let $N$ be a nontrivial normal subgroup. Let $g$ be a nontrivial element of $\Rec^{\bowtie}_d$. There exists a rectangle $M$ that is mapped by $g$ onto a rectangle disjoint of $M$ by an isometry. Let $h$ be an element of $\Rec_d$ that is a rectangle transposition between two rectangles that are both contained in $M$. Then $ghg^{-1}$ is a rectangle transposition between two rectangles that are contained in $g(M)$. Hence $ghg^{-1}h^{-1}$ is a nontrivial element of $\Rec_d$ of order $2$. Since $\Rec_d$ has a torsion-free abelianization and simple derived subgroup, we deduce that $N$ contains the derived subgroup of $\Rec_d$.

Consider a restricted shuffle. Since every rotation of the circle is a product of two reflections, we can write it as a product of two ``restricted reflections''. By a simple argument, every restricted reflection is conjugate in $\Rec^{\bowtie}_d$ to an element if $\Rec_d$ (necessarily in the derived subgroup). Since restricted reflections generate $\Rec_d$, we deduce that $N$ contains $\Rec_d$.

Every element of $\Rec^{\bowtie}_d$ is obviously the product of an element of $\Rec_d$ and a product of elements with pairwise disjoint support, each of which is supported by a single rectangle and acts as a self-isometry of this rectangle. In turn, such an element can be written as a product of such elements for which the self-isometry is a reflection according to some coordinate reflection. Such elements are ``restricted reflections'' and hence belong to $N$. Hence $N=\Rec^{\bowtie}_d$.
\end{proof}

\begin{proof}[Proof of Corollary \ref{recbowsimple}]
We only sketch the proof, since it is quite standard once Theorem \ref{Theorem D(REC) is generated by Transpo} is granted.

Let $N$ be a nontrivial normal subgroup, and take nontrivial $g\in N$, let $R$ be a rectangle on which $g$ acts as a single isometry, with $g(R)$ and $R$ disjoint. Let $h$ be an element of $\Rec_d$ of order 2, consisting of exchanging two small rectangles contained in $R$. Then $ghg^{-1}$ is also of order 2, exchanging two small rectangles contained in $g(R)$. Since every nontrivial normal subgroup of $\Rec_d$ contains its derived subgroup $\Rec'_d$, we deduce that $\Rec'_d\subseteq N$. 

Now let $r$ be a restricted shuffle in $\Rec_d$, with support $R$. Write it as $r=w^2$, with $w$ a restricted shuffle with support $R$. Let $s$ be the reflection with same support $R$ and switching the same direction. Then $sws^{-1}=w^{-1}$, and hence $wsw^{-1}s^{-1}=w^2=r$. Also, it is not hard to check that $s$ is conjugate to an element in $\Rec_d$ (we omit the simple argument, which is the same as in the case $d=1$). Hence $r=[w,s]$ belongs to $N$. Since restricted shuffles generate $\Rec_d$, we deduce $\Rec_d\subseteq N$.

Finally, every element in $\Rec^{\bowtie}_d$ can be written as $tu$ where $u\in\Rec_d$ and $t$ is a product with disjoint support $\prod_i t_i$, where each $t_i$ is supported by a single rectangle and is an isometry of this rectangle. Hence $t_i$ has order 2 and it is not hard again to check that $t_i$ is conjugate to an element of $\Rec_d$. Hence $N=\Rec_d^{\bowtie}$.
\end{proof}

\begin{Prop}\label{ori_lift}
There exists an injective group homomorphism $\Rec_d^{\bowtie}$ into $\Rec_d$. More precisely, denote by $C'$ the cube $[-1,1\mathclose[^d$. Then the ``centralizer'' in $\Rec_d(C')$ of the 2-elementary abelian group $B_d$ of order $2^d$ consisting of those $(x_1,\dots,x_d)\mapsto (\eps_1x_1,\dots,\eps_dx_d)$, $\eps_i\in\{\pm 1\}$, is naturally isomorphic to $\Rec_d^{\bowtie}([0,1\mathclose[^d)$. Here centralizer means those element which commute with these maps outside a smalle subset (small meaning contained in a finite union of hyperplanes). 
\end{Prop}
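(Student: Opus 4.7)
For $f\in\Rec_d^{\bowtie}(\interfo{0}{1}^d)$, fix a partition $\{R_j\}$ of $\interfo{0}{1}^d$ on which $f$ restricts to affine maps $x\mapsto\eps_j x+v_j$ with linear part $\eps_j\in B_d$. Define $\tilde f=\Phi(f)\in\Rec_d(C')$, up to a small set, as the map acting by translation by $\sigma\eps_j v_j$ on each piece $\sigma R_j$ (for $\sigma\in B_d$). The collection $\{\sigma R_j\}_{\sigma,j}$ is, modulo small sets, the $B_d$-orbit of the partition $\{R_j\}$ of the fundamental domain $\interfo{0}{1}^d\subset C'$, hence partitions $C'$. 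A short computation gives $\tilde f(\sigma R_j)=\sigma\eps_j f(R_j)$, so the image partition is the $B_d$-orbit of $\{f(R_j)\}$ and likewise covers $C'$; thus $\tilde f\in\Rec_d(C')$. The centralizer identity $\tilde f(\tau x)=\tau\,\tilde f(x)$ for $\tau\in B_d$ is immediate from the formula, using commutativity of $B_d$.

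\textbf{Multiplicativity and injectivity.} For $g\in\Rec_d^{\bowtie}$ with pieces $S_k$ and restrictions $y\mapsto\delta_k y+u_k$, the composite $g\circ f$ has the refined partition $\{R_j\cap f^{-1}(S_k)\}$ with restriction $x\mapsto\delta_k\eps_j x+(\delta_k v_j+u_k)$. A direct check shows that on each piece $\sigma(R_j\cap f^{-1}(S_k))$, both $\widetilde{g\circ f}$ and $\tilde g\circ\tilde f$ act as translation by $\sigma\eps_j v_j+\sigma\eps_j\delta_k u_k$ (using that $B_d$ is abelian), so $\Phi$ is a group homomorphism. For injectivity: if $\tilde f=\Id$, then $\sigma\eps_j v_j=0$ on every piece, forcing $v_j=0$; then $f$ sends $R_j$ onto $\eps_j R_j\subseteq[0,1]^d$, which forces $\eps_j=1$ on each piece of nonempty interior, and so $f=\Id$.

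\textbf{Surjectivity and the main obstacle.} Given $F\in\Rec_d(C')$ commuting with $B_d$ outside a small set, refine an associated partition $\{P_l\}$ so that each $P_l$ and each image $F(P_l)$ lies in a single quadrant $\sigma\,\interfo{0}{1}^d$, and so that the whole partition is $B_d$-stable; this is done by first replacing $\{P_l\}$ by the common refinement of its $B_d$-translates (which $F$ still respects since it centralizes $B_d$), then further intersecting with the quadrants and with $F^{-1}$ of the quadrants. Writing $P_l=\sigma_l P_l^0$ with $P_l^0\subset\interfo{0}{1}^d$ and $F(P_l)$ in the quadrant $\sigma'_l\,\interfo{0}{1}^d$, the translation constant $w_l$ of $F$ on $P_l$ induces the affine map $y\mapsto(\sigma'_l\sigma_l)y+\sigma'_l w_l$ on $P_l^0$, with linear part in $B_d$. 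The $B_d$-stability of the partition guarantees consistency across $B_d$-translates, assembling these local pieces into an element $f\in\Rec_d^{\bowtie}$ with $\Phi(f)=F$. The principal technical obstacle throughout is that $B_d$ does not strictly act on the half-open cube $C'=\interfo{-1}{1}^d$ (for instance $-\interfo{-1}{1}=\mathopen{]}\!-\!1,1\mathclose{]}$), so partitions, the $B_d$-action, and commutation all have to be interpreted modulo finite unions of coordinate hyperplanes, and each step of the proof requires verifying well-definedness at that level.
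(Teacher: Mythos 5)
Your construction $\Phi$ is exactly the map $q$ in the paper: on $\sigma R_j$, translation by $\sigma\eps_j v_j$ is the same as the paper's $q(f)(\sigma x)=\sigma\,d_f(x)f(x)$, and your surjectivity argument amounts to constructing the paper's inverse map $r$. The proposal is correct and takes essentially the same approach, merely phrased in terms of explicit piecewise partitions rather than the local differential $d_f$, and it carries out the verifications (multiplicativity, injectivity, well-definedness modulo small sets) that the paper explicitly leaves to the reader.
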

\begin{proof}
For $f\in\Rec_d^{\bowtie}$ and $x\in\mathopen]0,1\mathclose[^d)$ at the neighborhood of which $f$ is an isometry, define $d_f(x)$ as the differential of $f$ at $x$ (this is a diagonal matrix with diagonal entries in $\{\pm 1\}$). Then define $q(f)(x)$ as $d_f(x)f(x)$. More generally define $q(f)(Ax)$ as $Ad_f(x)f(x)$ for every diagonal matrix $A$ with diagonal entries in $\{\pm 1\}$. Then $q(f)\in\Rec_d(C')$.

Conversely, for $g\in\Rec_d(C')$ centralizing $B_d$ and $x\in\mathopen]0,1\mathclose[^d$ at the neighborhood of which $g$ is a translation, define $d_g(x)$ as the diagonal matrix with diagonal coefficients in $\{\pm 1\}$, such that the sign of $d_g(x)_{i,i}$ is the same as the sign of $g(x)_i$. Then define $r(g)(x)=d_g(x)g(x)$, and more generally $r(g)(Ax)=Ad_g(x)g(x)$ for any diagonal matrix $A$ with diagonal entries in $\{\pm 1\}$.

Then the reader can check that $r,q$ are group homomorphisms and are inverse to each other. Details are left to the reader since this is essentially the same argument as the classical case $d=1$.
\end{proof}

\section{Property FM}\label{s_kaz}

\begin{proof}[Proof of Proposition \ref{recfm}]
Let $\Gamma$ be a subgroup of $\Rec_d$ with Property FM. View $\Rec_d$ as acting on $\RR^d$ (identity outside $[0,1\mathclose[^d$) Property FM forces $\Gamma$ to be finitely generated (for the same reason as Property T), see \cite[Prop.\ 5.6]{cornulier2015irreducible}. Let $\Lambda$ be a dense finitely generated subgroup of $\RR^d$ containing all translations lengths of elements of $\Gamma$. Then $\Gamma$ preserves $\Lambda$ and acts faithfully on it. Let $\mathrm{Wob}(\Lambda)$ be the group of bounded displacement permutations of $\Lambda$ (where $\Lambda$ is viewed as set of vertices of its Cayley graph). Then this defines an injective homomorphism $\Gamma\to\mathrm{Wob}(\Lambda)$. Since the graph $\Lambda$ has uniform subexponential growth (uniformity being with respect to the choice of origin), the wobbling group $\mathrm{Wob}(\Lambda)$ contains no infinite subgroup with Property FM (this is \cite[Theorem 7.1(2)]{cornulier2015irreducible}, which follows the lines of \cite[Theorem 4.1]{juschenko2015invariant}, which asserts it for Property T).
\end{proof}

Note that this proof works equally for the whole group of permutations $f$ of $\RR^d$ such that $\{f(x)-x:x\in\RR^d\}$ is finite, and in particular works for the group of piecewise translations with arbitrary polyhedral pieces.

\section{A torsion group in $\Rec_3$}\label{torsion_section}

We make use of the following result of Nekrashevych \cite{Nek}\footnote{The theorem appeared in this way in a first preliminary ArXiv version of \cite{Nek} (v1) and was then generalized.}.

\begin{Thm}\label{Nekt}
Let $X$ be an infinite Stone space (=totally disconnected Hausdorff compact space) and $\xi\in X$. Let $a,b$ be self-homeomorphisms of $X$, with $a^2=b^2=\mathrm{id}_X$, with $b(\xi)=\xi$; assume that $\langle a,b\rangle$ acts minimally on $X$. Let $X_1,\dots,X_n$ be a $b$-invariant clopen partition of $X\smallsetminus\{\xi\}$ such that each $X_i$ accumulates at $\xi$. Let $b_i$ be the restriction of $b$ to $X_i$ (identity on $X\smallsetminus\{\xi\}$). Let $K\simeq (\ZZ/2\ZZ)^n$ be the subgroup generated by $b_1,\dots,b_n$. Let $H$ be a subgroup of $K$ not containing $b$, and all of whose $n$ projections are surjective. Then $\langle a,K\rangle$ is an infinite torsion group.
\end{Thm}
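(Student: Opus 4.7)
The plan splits the statement into an infiniteness part and a torsion part, following Nekrashevych's self-similarity strategy. For infiniteness, I would note that $b=b_1b_2\cdots b_n$ (the product is well-defined because the $b_i$ have pairwise disjoint supports in $X\smallsetminus\{\xi\}$ and all fix $\xi$), so $b\in K$ and hence $\langle a,b\rangle\subseteq\langle a,K\rangle$. A finite group cannot act minimally on an infinite Hausdorff space---every orbit would be finite, hence closed, hence not dense---so minimality of $\langle a,b\rangle$ on the infinite Stone space $X$ forces $\langle a,b\rangle$, and in particular $\langle a,K\rangle$, to be infinite. This part is routine.

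The substance lies in the torsion assertion, where the hypotheses on $H$ become indispensable. Writing elements of $K\simeq(\mathbf{Z}/2\mathbf{Z})^n$ additively in the basis $b_1,\dots,b_n$, the condition $b\notin H$ means every $h\in H$ has at least one vanishing coordinate, i.e.\ restricts to the identity on some $X_i$. My plan is to attach to every alternating word $w=a\,h_m\,a\,h_{m-1}\cdots a\,h_1$ (with $h_j\in H$) a numerical complexity---essentially the word length in $a$ and $H$---and argue by induction on this complexity that $w$ has finite order. The induction base consists of $w\in H$ or $w=aha$, both visibly of order dividing $2$. For the induction step, use that each $X_i$ accumulates at $\xi$: on a sufficiently small clopen neighborhood $U$ of $\xi$ saturated with respect to the partition $\{X_i\}$, a large power $w^N$ decomposes as a product of commuting homeomorphisms, one supported on each piece $X_i\cap U$, and on each such piece the induced action is conjugate to a word of strictly smaller complexity in $\langle a,H\rangle$---shorter because on that piece one of the $h_j$ acts trivially, causing two adjacent copies of $a$ to collapse via $a^2=\mathrm{id}$.

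The main obstacle will be making this reduction quantitatively precise. Two things need to be controlled simultaneously. First, $N$ must be chosen so that $w^N$ is identity outside $U$; this uses compactness of $X$ and a finite orbit analysis of $\langle a,b\rangle$-orbits meeting $X\smallsetminus U$, combined with the fact that $w$ permutes clopen pieces. Second, once $w^N|_{X_i\cap U}$ is rewritten as an alternating word, one must verify that its factors really lie in $\langle a,H\rangle$ rather than merely in $\langle a,K\rangle$, and that the length has dropped. Here the surjectivity of each of the $n$ projections $H\to\mathbf{Z}/2\mathbf{Z}$ is the combinatorial engine: it guarantees that for every coordinate $i$ there are elements of $H$ nontrivial on $X_i$ (and elements trivial on it), which is what allows the combinatorial manipulations used in the reduction to be realized inside $H$ and not outside it. Once the induction closes, every such word has finite order; combined with the infiniteness established in the first paragraph, this yields the theorem.
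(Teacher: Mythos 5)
The paper does not supply its own proof of Theorem~\ref{Nekt}: it is cited directly from Nekrashevych~\cite{Nek} (see the footnote there), so there is no internal argument to compare against. Evaluating your proposal on its own merits, two issues stand out. First, a mismatch of groups: the printed conclusion about $\langle a,K\rangle$ must be a typo for $\langle a,H\rangle$, since $b\in K$, so $\langle a,b\rangle\subseteq\langle a,K\rangle$, and in the application of Section~\ref{torsion_section} the element $ab$ is an irrational translation of infinite order, so $\langle a,K\rangle$ is never torsion; moreover the hypotheses on $H$ would be vacuous, and the announced ``generated by 3 explicit elements of order 2'' matches $a$ together with the two generators of an $H$ of order $4$, not $a$ together with all $n$ generators of $K$. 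Your infiniteness argument rests precisely on $b\in K$; since $b\notin H$ by hypothesis, it does not establish infiniteness of $\langle a,H\rangle$, which is the group your torsion argument actually treats, so a separate argument is needed there.

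Second, and this is the genuine gap, the torsion step hinges on the claim that for a fixed word $w$ and a small clopen $U\ni\xi$, some large power $w^N$ is the identity outside $U$ and splits on $U$ as a product of commuting pieces supported on the $X_i\cap U$. Neither assertion is justified. Elements of $\langle a,H\rangle$ may a priori have infinite orbits meeting $X\smallsetminus U$ (in the application they act like fragments of the dense rotation $ab$), and no power of such an element localizes inside $U$; proving that some power does would already amount to proving that $w$ has finite order, so the proposed reduction is circular. Even granting localization, the $a$-factors in $w$ do not preserve the pieces $X_i\cap U$, so the claimed commuting decomposition is also unclear. Nekrashevych's argument does not localize high powers of $w$ near $\xi$; it tracks the orbit of a single point under $w$ along the $\langle a,b\rangle$-orbit of $\xi$, and uses that every $h\in H$ vanishes on some $X_i$ to force an $a^2$-cancellation in the effective word read along the orbit, eventually bounding orbit lengths and closing the induction. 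The role you correctly attribute to the surjectivity of the projections of $H$ (ensuring the recursively arising local data still lie in $H$) does appear in that orbital argument, but the localization of powers you rely on is not the mechanism, and I do not see how to repair your version without essentially replacing it by the orbital one.
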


Note that there exists such a subgroup $H$ in $(\ZZ/2\ZZ)^n$ with the given conditions (avoiding the diagonal and with all projections surjective) if and only if $n\ge 3$, and then $H$ can be chosen to be of order 4 (e.g., generated by $b_1b_2$ and $b_2\dots b_n$).

To apply the theorem, it is convenient to work in the torus $T^3=\RR^3/\ZZ^3$ rather than $[0,1\mathclose[^3$: the definition of $\Rec_3^{\bowtie}(T^3)$ is immediate (using the canonical bijection $[0,1\mathclose[^3\to T^3$).

We start with two involutive self-homeomorphisms $a,b$ of $T$ given by $a(x)=v_0-x$ and $b(x)=-x$, where $v_0$ is a fixed totally irrational vector (in the sense that $\ZZ^3+\ZZ v_0$ is dense in $\RR^3$). Note that $\langle a,b\rangle$ acts minimally (since it contains a dense cyclic subgroup $\langle ab\rangle$ of translations of index 2).

Define $\bar{T}$ as the Denjoy-doubled circle: this is a copy of the circle in which each point $x$ has been replaced with a pair $\{x_-,x_+\}$. Endowed with the circular order, this is a Stone space, and the canonical two-to-one projection $\bar{T}\to T$ is continuous. Then each element of $\Rec_3^{\bowtie}(T^3)$ canonically lifts to a self-homeomorphism of $\bar{T}^3$. Hence, we obtain two involutive self-homeomorphisms $\bar{a},\bar{b}$ of $\bar{T}^3$, and $\langle\bar{a},\bar{b}\rangle$ acts minimally on $\bar{T}^3$.

We define a partition of $\bar{T}^3$ by cutting (in halves) the cube $[-1/2,1/2]^3$ into 8 cubes. Formally speaking: define $I_+=[0+,(1/2)_-]$, $I^-=[(-1/2)_+,0_-]$ and for any signs $a,b,c\in\{+,-\}$, define $C_{abc}=I_a\times I_b\times I_c$. Then define $C_0=C_{+++}\cup C_{---}$, $C_1=C_{-++}\cup C_{+--}$, $C_2=C_{+-+}\cup C_{-+-}$, $C_3=C_{++-}\cup C_{--+}$ (so $\bar{T}^3=C_0\sqcup C_1\sqcup C_2\sqcup C_3$).

\begin{Lem}\label{cantor}
There exists intermediate $\langle \bar{a},\bar{b}\rangle$-equivariant quotient map $\bar{T}^3\stackrel{p}\to K\stackrel{\pi}\to T^3$, such that $K$ is homeomorphic to a Cantor space, such that $\pi^{-1}(\{(0,0)\}$ is a singleton (denoted $0$), and such that the $p(C_i)$, $i=0,1,2,3$ are closed subsets pairwise intersecting at $0$.
\end{Lem}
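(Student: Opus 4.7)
The plan is to construct $K$ as the quotient of $\bar T^3$ that collapses the $\pi$-fiber above every point of the orbit $D := \langle a,b\rangle\cdot(0,0,0) = \{kv_0 \bmod \ZZ^3 : k \in \ZZ\} \subset T^3$, and leaves the other fibers alone. Explicitly, let $\sim$ be the equivalence relation on $\bar T^3$ given by $z \sim z'$ iff either $z=z'$ or $\pi(z)=\pi(z')\in D$; set $K := \bar T^3/\sim$ with quotient map $p$, and let $\bar\pi:K\to T^3$ be induced by $\pi$. Because $D$ is $\langle a,b\rangle$-invariant and $\bar a,\bar b$ lift $a,b$ along $\pi$, the relation $\sim$ is invariant under $\bar a,\bar b$, so these homeomorphisms descend to $K$ and both $p$ and $\bar\pi$ become equivariant. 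By construction $\bar\pi^{-1}((0,0,0))$ is the single class $p(\pi^{-1}(0,0,0))$, which we call $0$.

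The key step is to show that $K$ is a Cantor space. I would first observe that a clopen product box $B=\prod_{i=1}^{3}[e_{i,+},f_{i,-}]$ in $\bar T^3$ is $\sim$-saturated as soon as each $e_i,f_i\in T$ avoids the countable set $\pi_i(D)\subset T$: if $d\in D$ and some point of $\pi^{-1}(d)$ lies in $B$, then $d_i\notin\{e_i,f_i\}$ forces $e_i<d_i<f_i$ strictly in $T$ for every $i$, so both doubles $d_{i,-}$ and $d_{i,+}$ lie in $[e_{i,+},f_{i,-}]$ and hence $\pi^{-1}(d)\subset B$. Since $\pi_i(D)$ is countable, such boxes form a fundamental system of arbitrarily small clopen saturated neighborhoods around every point of $\bar T^3$ and around every fiber $\pi^{-1}(d)$ for $d\in D$; this gives $K$ a basis of clopen sets that separates equivalence classes, so $K$ is Hausdorff and totally disconnected. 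Metrizability follows because $\sim$ is a closed subset of $\bar T^3\times\bar T^3$: along a convergent sequence $(z_n,z_n')\in\sim$ with $\pi(z_n)=\pi(z_n')=d_n\in D$, either $d_n$ is eventually constant and the limit lies in some $\pi^{-1}(d)\times\pi^{-1}(d)\subset\sim$, or $d_n$ varies and (after passing to subsequences) each pair of doubles $d_{n,i,-},d_{n,i,+}$ accumulates on the same side of $d_i\in T$, forcing $z=z'$. Finally, $K$ has no isolated points because each class is finite while $\bar T^3$ has none, completing the Cantor space verification.

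It remains to analyze the $p(C_i)$. Each is closed as a continuous image of a compact set, so the content lies in computing pairwise intersections. The crucial point is that $v_0$ being totally irrational forces each $(v_0)_i$ to be irrational, so for every $k\in\ZZ\setminus\{0\}$ the number $k(v_0)_i$ equals neither $0$ nor $1/2$ in $T$; therefore for $d\in D\setminus\{0\}$ and each $i$ the two doubles $d_{i,-},d_{i,+}$ both lie in the same clopen half $I_+$ or $I_-$ of $\bar T$. Consequently the eight preimages of $d$ all sit inside a single $C_{abc}$, hence inside a single $C_i$, and $p(\pi^{-1}(d))$ belongs to only one $p(C_i)$. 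In contrast, the eight preimages of $(0,0,0)$ are exactly the vertices $(0_{\varepsilon_1},0_{\varepsilon_2},0_{\varepsilon_3})$, of which two lie in each of $C_0,C_1,C_2,C_3$; thus $0\in p(C_i)$ for every $i$. Combining, $p(C_i)\cap p(C_j)=\{0\}$ for $i\neq j$. The main obstacle in the whole argument is the saturation/separation step in the second paragraph: once saturated clopen boxes are shown to exist, everything else is bookkeeping.
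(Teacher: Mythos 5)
Your construction has a genuine gap at the very step you flag as "the main obstacle'', but in a different place than you think: the issue is not the existence of saturated clopen boxes (that part of the argument is fine) but the metrizability of $K$. You define $K$ as a quotient of $\bar T^3$ by collapsing countably many finite fibers. But $\bar T$ --- the circle with \emph{every} point doubled --- is the double-arrow (split interval) space: compact, Hausdorff, totally disconnected, but not second countable and hence not metrizable, so it is not a Cantor space. Its cube $\bar T^3$ is likewise not metrizable. Identifying a countable collection of 8-element fibers cannot fix this: the quotient $K$ still contains, e.g., the image of $\{(x_+,x_+,x_+):x\in T\setminus\pi_1(D)\}$, an uncountable subset on which the subspace topology is that of an uncountable subset of the double arrow, which is not second countable. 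So $K$ is not a Cantor space, and the lemma fails as you have set it up.

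The specific false step is the sentence ``Metrizability follows because $\sim$ is a closed subset of $\bar T^3\times\bar T^3$.'' Closedness of the equivalence relation on a compact Hausdorff space guarantees that the quotient is compact Hausdorff, but says nothing about metrizability; metrizability of the quotient requires metrizability of the source (or some separate second-countability input), which fails here. The paper avoids this by inserting an extra reduction before gluing: one first picks a suitable \emph{countable} dense set $D\subset\RR$ (stable under the relevant maps and containing the coordinates of the orbit of $0$), and passes from $\bar T$ to $\bar T_D$, the circle in which only the points of $D$ are doubled. That space \emph{is} a Cantor space, and the gluing of the 8-element fibers over the orbit of $0$ is then carried out inside $\bar T_D^3$. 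Your arguments for equivariance, saturated clopen boxes, total irrationality forcing the nonzero orbit fibers into a single $C_i$, and $p(C_i)\cap p(C_j)=\{0\}$ all transfer essentially verbatim to that setting once the countable doubling is in place; it is only the choice of the ambient space that needs to be corrected.
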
 

Granted the lemma, we conclude: defining $P_i=p(C_i)\smallsetminus\{0\}$, we obtain the desired clopen partition of $K\smallsetminus\{0\}$, and Theorem \ref{Nekt} applies. 

\begin{proof}[Proof of Lemma \ref{cantor}]
First, let $D_0,D_1$ be dense countable subset of $\RR$, each stable under all coordinate actions of $a$ and $b$ and by $x\mapsto x\pm 1$, with $0\in D_0$ and $0\notin D_1$. Write $D=D_0\cup D_1$.

Let $\bar{T}_D$ be the circle with all points in $D$ doubled (i.e., quotient of $\bar{T}$ by identifying $x_+$ and $x_-$ whenever $x\notin D$). This is a Cantor space. Then $a,b$ lift to $T_D^3$. Next, for every point $(x,y,z)$ in the $\langle a,b\rangle$-orbit of $(0,0,0)$, identify the 8 preimages of $(x,y,z)$ from $T_D^3$ to get a space $K$, and the quotient map $T_D^3\to K\to T^3$ are $\langle a,b\rangle$-equivariant.

It is enough to show that any two points in $K$ are separated by clopen subsets: this ensures that $K$ is both Hausdorff and totally disconnected. Write $p$ and $\pi$ for the projections as in the assertion of the lemma. If the two points have distinct images by $\pi$, this is straightforward: choose a small cube around one point with coordinates in $D_1$, small enough to avoid the other point.

Now suppose both points have the same image in $T^3$. Up to permute coordinates, we can suppose that these points have the form $(x_+,y',z')$ and $(x_-,y'',z'')$. Here either $y',y''$ are the same element of $T$, or have the form $y_+$ and $y_-$ for some $y$, similarly for $z',z''$. By assumption, $(x,y,z)$ is not in the orbit of zero. Since the orbit of zero is the orbit of powers of an irrational rotation, we deduce that no element $(x,y_1,z_1)$ closed enough to $(x,y,z)$ is in the orbit of zero. Hence, $[u,x_-]\times P$ and $[x_+,v]\times P$, for $u,v$ in $D_1$ close enough to $x$ and $P$ is a small enough 2-dimensional rectangle containing $(y,z)$, with coordinates, in $D_1$.

That the $p(C_i)$ are pairwise disjoint outside zero follows from the fact that the only element in the orbit of $(0,0,0)$ that has a 0 or 1 coordinate is $(0,0,0)$ itself.
\end{proof}

We have thus constructed an infinite finitely generated torsion subgroup in $\Rec_3^{\bowtie}$, and the latter embeds in $\Rec_3$ by Proposition \ref{ori_lift}.

\begin{Rem}
This construction is a variant of the one in \cite[\S 6.2]{Nek} (consisting of ``triangle exchanges''), which was suggested by the first-named author to V. Nekrashevych after reading a first version of \cite{Nek}. 
\end{Rem}
 
\bibliographystyle{plain}
\bibliography{biblio}

\begin{thebibliography}{10}

\bibitem{Arnoux}
Pierre Arnoux.
\newblock {\'E}changes d’intervalles et flots sur les surfaces.
\newblock {\em Ergodic theory (Sem., Les Plans-sur-Bex, 1980)}, pages 5--38,
  1981.

\bibitem{ArnouxThese}
Pierre Arnoux.
\newblock {\em Un invariant pour les \'echanges d'intervalles et les flots sur
  les surfaces}.
\newblock PhD thesis, Universit\'e de Reims, 1981.

\bibitem{Calegari-Rolfsen}
Danny Calegari and Dale Rolfsen.
\newblock Groups of {PL} homeomorphisms of cubes.
\newblock {\em Ann. Fac. Sci. Toulouse Math. (6)}, 24(5):1261--1292, 2015.

\bibitem{CorBou}
Yves Cornulier.
\newblock Groupes pleins-topologiques (d'apr\`es {M}atui, {J}uschenko,
  {M}onod...).
\newblock In {\em S\'eminaire Bourbaki, Volume 2012/2013, Expos\'es 1059-1073},
  volume 361 of {\em Ast\'erisque}, pages 183--223, 2014.

\bibitem{cornulier2015irreducible}
Yves Cornulier.
\newblock Irreducible lattices, invariant means, and commensurating actions.
\newblock {\em Mathematische Zeitschrift}, 279(1):1--26, 2015.

\bibitem{CorMod}
Yves Cornulier.
\newblock {Realizations of groups of piecewise continuous transformations of
  the circle.}
\newblock {\em {J. Mod. Dyn.}}, 16:59--80, 2020.

\bibitem{DFG2013}
Fran\c{c}ois {Dahmani}, Koji {Fujiwara}, and Vincent {Guirardel}.
\newblock {Free groups of interval exchange transformations are rare.}
\newblock {\em {Groups Geom. Dyn.}}, 7(4):883--910, 2013.

\bibitem{Elliott79}
George~A Elliott.
\newblock On totally ordered groups, and {$K_0$}.
\newblock In {\em Ring Theory Waterloo 1978 Proceedings, University of
  Waterloo, Canada, 12--16 June, 1978}, pages 1--49. Springer, 1979.

\bibitem{Goetz}
Arek Goetz.
\newblock Dynamics of piecewise isometries.
\newblock {\em Illinois J. Math.}, 44(3):465--478, 2000.

\bibitem{GuelmanLiousse2021}
Nancy Guelman and Isabelle Liousse.
\newblock Uniform simplicity for subgroups of piecewise continuous bijections
  of the unit interval.
\newblock {\em ArXiv:2109.05706}, 2021.

\bibitem{Haller}
Hans Haller.
\newblock Rectangle exchange transformations.
\newblock {\em Monatsh. Math.}, 91(3):215--232, 1981.

\bibitem{juschenko2015invariant}
Kate Juschenko and Mikael de~La~Salle.
\newblock Invariant means for the wobbling group.
\newblock {\em Bulletin of the Belgian Mathematical Society-Simon Stevin},
  22(2):281--290, 2015.

\bibitem{Nek}
Volodymyr Nekrashevych.
\newblock Palindromic subshifts and simple periodic groups of intermediate
  growth.
\newblock {\em Annals of Mathematics}, 187(3):667--719, 2018.

\bibitem{Novak}
Christopher Novak.
\newblock Discontinuity-growth of interval-exchange maps.
\newblock {\em J. Mod. Dyn.}, 3(3):379--405, 2009.

\bibitem{Rubin}
Matatyahu Rubin.
\newblock On the reconstruction of topological spaces from their groups of
  homeomorphisms.
\newblock {\em Trans. Amer. Math. Soc.}, 312(2):487--538, 1989.

\bibitem{Sah}
Chih-Han Sah.
\newblock Scissors congruences of the interval.
\newblock {\em preprint}, 1981.

\bibitem{Vorobets2011}
Ya.~B. Vorobets.
\newblock On the commutator of the group of interval exchanges.
\newblock {\em Tr. Mat. Inst. Steklova}, 297~(Poryadok i Khaos v Dinamicheskikh
  Sistemakh):313--325, 2017.
\newblock English version: Proc. Steklov Inst. Math. {{\bf{2}}97} (2017), no.
  1, 285--296.

\end{thebibliography}
\end{document}